\def\cal{\mathcal}
\def\Bbb{\mathbb}
\newenvironment{NB}{
\color{red}{\bf NB}. \footnotesize 
}{}
\newenvironment{NB2}{
\color{blue}{\bf NB}. \footnotesize
}{}
\newcommand{\GL}  {\operatorname{GL}}
\newcommand{\ch}{\operatorname{ch}}
\newcommand{\Coh}{\operatorname{Coh}}
\newcommand{\Ext}{\operatorname{Ext}}
\newcommand{\Hom}{\operatorname{Hom}}
\newcommand{\rk}{\operatorname{rk}}
\newcommand{\NS}{\operatorname{NS}}
\newcommand{\coker}{\operatorname{coker}}
\newcommand{\Amp}{\operatorname{Amp}}
\newcommand{\alg}{\operatorname{alg}}
\newcommand{\Stab}{\operatorname{Stab}}
\font\b=cmr10 scaled \magstep5
\def\bigzerou{\smash{\lower1.7ex\hbox{\b 0}}}
\numberwithin{equation}{section}
\theoremstyle{plain}
 \newtheorem{thm}{Theorem}[section]
 \newtheorem{lem}[thm]{Lemma}
 \newtheorem{prop}[thm]{Proposition}
 \newtheorem{cor}[thm]{Corollary}
\theoremstyle{definition}
 \newtheorem{defn}[thm]{Definition}
\theoremstyle{remark}
 \newtheorem{rem}[thm]{Remark}
\begin{document}

\title{Preservation of stability under
the Fourier-Mukai transform whose kernel is the Poincar\'{e} line bundle}
\author{K\={o}ta Yoshioka}
\address{Department of Mathematics, Faculty of Science,
Kobe University,
Kobe, 657, Japan
}
\email{yoshioka@math.kobe-u.ac.jp}

\thanks{
The author is supported by the Grant-in-aid for 
Scientific Research (No. 21H04429, 
23K03053, 26K06742), JSPS}
\keywords{abelian surfaces, Bridgeland stability, Fourier-Mukai transforms}

\begin{abstract}
For a Fourier-Mukai transform whose kernel is the Poincar\'{e} line bundle,
we study the preservation of Gieseker stability of sheaves on any abelian surface. 
As an application, we give remarks on the weak Brill-Noether property.
\end{abstract}

\maketitle

\renewcommand{\thefootnote}{\fnsymbol{footnote}}
\footnote[0]{2010 \textit{Mathematics Subject Classification}. 
Primary 14D20.}

\section{Introduction}

Let $X$ be an abelian surface over ${\Bbb C}$ and
let $(H^{2*}(X,{\Bbb Z}),\langle\;\;,\;\; \rangle)$ be the Mukai lattice of $X$:
Thus 
$H^{2*}(X,{\Bbb Z}):={\Bbb Z} \oplus H^2(X,{\Bbb Z}) \oplus {\Bbb Z}$ and
\begin{equation*}
\langle x,y \rangle:=(x_1 \cdot y_1)-x_0 y_2-y_0 x_2 \in {\Bbb Z}
\end{equation*}
for $x=(x_0,x_1,x_2), y=(y_0,y_1,y_2) \in  {\Bbb Z} \oplus H^2(X,{\Bbb Z}) \oplus {\Bbb Z}$.
Mukai lattice has  a natural Hodge structure and
$H^{2*}(X,{\Bbb Z})_{\alg}:={\Bbb Z} \oplus \NS(X) \oplus {\Bbb Z}$ is the algebraic
part of $H^{2*}(X,{\Bbb Z})$.
For an object $E$ of the bounded derived category ${\bf D}(X)$ of
coherent sheaves, 
$v(E):=\ch(E) \in H^{2*}(X,{\Bbb Z})$ is the Mukai vector of $E$. 
We also say that $v \in H^{2*}(X,{\Bbb Z})_{\alg}$ is a Mukai vector.
For an ample divisor $H$ and a Mukai vector $v$,
${\cal M}_H(v)$ denotes the moduli stack of semi-stable sheaves $E$ with $v(E)=v$.
There are many results on semi-stable sheaves and also their moduli stacks ${\cal M}_H(v)$.
For examples, ${\cal M}_H(v) \ne \emptyset$ if and only if $\langle v^2 \rangle \geq 0$
and ${\cal M}_H(v)$ is an irreducible normal stack of dimension $\langle v^2 \rangle+1$, where
$H$ is general with respect to $v$.   
For the proof of these results, we used the symmetry of the derived category of coherent sheaves, that is,
the Fourier-Mukai transforms.
Since they are the transforms of the derived categories of coherent sheaves,
Gieseker stability is not preserved in general.
So we need to
find some conditions for the preservation of stability, or to replace 
the kernel of  the Fourier-Mukai transform for the preservation of stability.

For the Fourier-Mukai transform
$\Phi_{X \to \widehat{X}}^{{\cal P}^{\vee}}:{\bf D}(X) \to {\bf D}(\widehat{X})$,
we proved in \cite{Y:Stability}
that the stability is preserved if the Picard number is 1, where
$\widehat{X}$ is the dual abelian surface and
${\cal P}$ is the Poincar\'{e} line bundle on $X \times \widehat{X}$.
In this paper, we shall treat the case where the Picard number is not 1.
The following generalization of \cite[Thm. 3.14]{Y:Stability} is our main result.
\begin{NB}
In \cite{Y:abel},
we constructed a birational map of moduli space by a modification of the kernel.
For the modification, we used a characterization of the Albanese map in \cite{Y:7}
to study the Fourier-Mukai transform of a general member $E$ of ${\cal M}_H(v)$.
In particular we constructed a birational map of moduli stacks.
In \cite{MYY:2018}, \cite{YY2}, \cite{Y}, we explained the birationality of
moduli spaces in terms of wall crossing of Bridgeland stability conditions.
In particular if the Picard number is 1, then the birational map in \cite{Y:abel} is the same 
as the birational map induced by totally semistable walls.

For the preservation of Gieseker stability, we studied extensively if the Picard number is 1.
We proved that the stability is preserved for a general member of
moduli spaces.
On the other hand, if the Picard number is not 1,
not much is known.

On the other hand if ${\cal P}$ is a family of line bundles and the Picard number $\rho(X)=1$,
then the Fourier-Mukai transform preserves the stability for a general member of the moduli stack.

\end{NB}

\begin{thm}[{Theorem \ref{thm:stability}}]\label{thm:main}
Let $v=(r,\xi,a)$ be a Mukai vector such that $r > 0$ and $(\xi \cdot H)>0$, where 
$H$ is an ample divisor on $X$.
We set $\ell:=\langle v^2 \rangle/2$.
\begin{enumerate}
\item[(1)]
Assume that $a>0$. 
If $X$ is not a product of elliptic curves or $\xi$ is not primitive,
then there are ample divisors $L \in \NS(X)$ and $L' \in \NS(\widehat{X})$ such that 
$\Phi_{X \to \widehat{X}}^{{\cal P}^{\vee}}(E)^{\vee}$ is stable with respect to $L'$ for a general
$E \in {\cal M}_L(v)$.
\item[(2)]
Assume that $a \leq 0$ and
$v \ne (\ell,kC,-1), (1,kC,-\ell)$, where 
$C$ is an elliptic curve and $k \geq \ell+1$. 
Then there are ample divisors $L \in \NS(X)$ and $L' \in \NS(\widehat{X})$ such that 
$\Phi_{X \to \widehat{X}}^{{\cal P}^{\vee}}(E)[1]$ is stable with respect to $L'$ for a general
$E \in {\cal M}_L(v)$.
\end{enumerate}
\end{thm}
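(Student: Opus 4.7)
The plan is to work through Bridgeland stability, treating Gieseker $L$-stability on $X$ and Gieseker $L'$-stability on $\widehat{X}$ as specializations in suitable chambers of the stability manifold that are connected by the Fourier-Mukai transform $\Phi^{{\cal P}^\vee}_{X\to\widehat{X}}$. For a general polarization $L \in \NS(X)$ and a general $E \in {\cal M}_L(v)$, I would first place $E$ in a Bridgeland-stable chamber $\sigma_{\beta,\omega}$ with $\omega$ a large positive multiple of $L$, so that Gieseker $L$-stability is recovered as the large-volume limit. The equivalence $\Phi^{{\cal P}^\vee}$, composed with $(-)^\vee$ in case (1) or with the shift $[1]$ in case (2), carries this chamber to a Bridgeland stability condition on ${\bf D}(\widehat X)$; the task is then to deform from the transformed chamber to the Gieseker $L'$-chamber without crossing walls that destabilize the transform of $E$.

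The preliminary input is a WIT-type statement. When $a>0$, the hypotheses $r>0$ and $(\xi\cdot H)>0$ should force $\Phi^{{\cal P}^\vee}(E)$ for generic $E$ to be $\WIT_0$ after dualization, so that $\Phi^{{\cal P}^\vee}(E)^{\vee}$ is a torsion-free sheaf of rank $a$ on $\widehat X$; when $a\leq 0$, a parallel argument shows $\Phi^{{\cal P}^\vee}(E)[1]$ is a torsion-free sheaf of rank $-a$, with the boundary case $a=0$ requiring slightly more care. After computing the Mukai vector $v'$ of the transformed sheaf on $\widehat X$, I would choose $L'\in \NS(\widehat X)$ ample with $(v'\cdot L')>0$. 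Since $\Phi^{{\cal P}^\vee}$ is an equivalence of derived categories, it preserves Bridgeland stability with respect to the transformed stability condition, so the question reduces to a wall-crossing analysis from the transformed chamber to the large-volume Gieseker chamber $\sigma_{\beta',tL'}$ ($t\gg 0$) on $\widehat X$: stability is preserved along the path except at walls.

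The main obstacle is this wall analysis on $\widehat X$: one must enumerate, via the Mukai pairing, the sub-Mukai-vectors $v_1'$ of $v'$ that could define walls crossed by the path, and for each check that either the wall is avoidable by perturbing $L$ and $L'$ (which uses the extra freedom supplied by $\rho(X)\geq 2$, the feature distinguishing this result from the Picard-number-one setting of \cite{Y:Stability}), or that no such destabilizing sub-object is realized in the transform of a general $E$. The exceptional cases in the statement are exactly those where an unavoidable wall persists: in case (1), when $X$ is a product of elliptic curves and $\xi$ is primitive, a line bundle pulled back from one factor furnishes a sub-object destabilizing the transform of every member; in case (2), the Mukai vectors $(\ell,kC,-1)$ and $(1,kC,-\ell)$ with $k\geq \ell+1$ are precisely those for which an analogous sub-line-bundle supported along an elliptic fibration $\widehat X\to \widehat C$ destabilizes the transform of every $E$. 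Outside these cases, a careful perturbation argument combined with the genericity of $E$ pushes every potentially destabilizing wall away; carrying out this enumeration and the needed perturbation is the bulk of the technical work.
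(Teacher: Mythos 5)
Your overall framework --- realizing Gieseker stability on $X$ and on $\widehat{X}$ as chambers in $\Stab$ connected by $\Phi$, and reducing the theorem to a wall-crossing analysis along a path between them --- matches the paper's, which works along the explicit line ${\cal L}=\{(0,tH)\}$ and its image $\{(0,t'\widehat{H})\}$ under $\Phi[1]$ (Proposition \ref{prop:isom}). But your argument has a genuine gap at the step you call the ``preliminary input'': you assert that for $a>0$ the hypotheses $r>0$, $(\xi\cdot H)>0$ ``should force'' $\Phi(E)$ for generic $E$ to satisfy a WIT-type property, so that $\Phi(E)^{\vee}$ is a torsion-free sheaf. This is exactly what is \emph{false} for a general choice of polarization: by \cite{CNY2} (quoted in the introduction and in the proposition on infinitely many totally semi-stable walls), there exist $v$ and ample $L$ such that both $H^0(\Phi(E))$ and $H^1(\Phi(E))$ have positive rank for \emph{every} $E\in{\cal M}_L(v)$. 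Establishing the torsion-freeness of the transform for a suitably chosen $L$ is not an input to the proof; it is the proof. The paper does this via Proposition \ref{prop:t_0}: along ${\cal L}$ there are thresholds $t_1\geq t_2\geq 0$ separating the regimes where a general $\sigma_{(0,tH)}$-stable object is torsion free, has torsion, or has $H^{-1}\neq 0$, and the polarization $L$ is then produced by Lemma \ref{lem:Gieseker} from the chamber just above $t_1$.

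The second missing idea is the reduction that would make your ``enumerate the walls'' step tractable. For a \emph{general} member of the moduli stack only totally semi-stable walls matter (crossing any other wall induces a birational identification, Remark \ref{rem:codim0}), and these are classified: $W=W_{v_1}$ with $v_1$ isotropic and $\langle v,v_1\rangle=1$, giving the decomposition $v=\ell v_1+v_2$ with $v_1,v_2$ isotropic (Proposition \ref{prop:tot-wall}). The paper then computes $\Phi$ of the two Harder--Narasimhan factors $E_1,E_2$ at the critical wall using the semi-homogeneous dichotomy (Lemma \ref{lem:semihom}); the exceptional cases of the theorem fall out of the sign analysis in Lemmas \ref{lem:a>0} and \ref{lem:a<0}, where one factor has Mukai vector $\ell_2(0,\xi_2,0)$, or the pair is $(1,k_1C,0),(0,k_2C,-1)$, and the transform necessarily acquires torsion. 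Your heuristic description of the exceptional cases points in the right direction, but without the isotropic decomposition and the explicit computation of $\Phi(E_i)$ there is no way to verify that the listed cases are the \emph{only} obstructions, which is the bulk of the theorem.
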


We would like to remark that the preservation of stability is related to the weak Brill-Noether property.
Indeed if $\Phi_{X \to \widehat{X}}^{{\cal P}^{\vee}}(E)^{\vee} \in \Coh(\widehat{X})$ (resp.
$\Phi_{X \to \widehat{X}}^{{\cal P}^{\vee}}(E)[1] \in \Coh(\widehat{X})$), then
there is a point $\hat{x} \in \widehat{X}$ such that
$H^i(X,E \otimes {\cal P}_{|X \times \{\hat{x} \}}^{\vee})=0$ for $i \ne 0$
(resp. $H^i(X,E \otimes {\cal P}_{|X \times \{\hat{x} \}}^{\vee})=0$ for $i \ne 1$).
In \cite{CNY2}, Coskun, Nuer and the author proved 
that the weak Brill-Noether property 
does not hold in general.
In particular there is a Mukai vector $v$ and an ample divisor $L \in \NS(X)$
such that the cohomology sheaves $H^0(\Phi_{X \to \widehat{X}}^{{\cal P}^{\vee}}(E))$ and
$H^1(\Phi_{X \to \widehat{X}}^{{\cal P}^{\vee}}(E))$ are of positive rank  
for all $E \in {\cal M}_L(v)$.
On the other hand, our main result almost says that if we choose the polarization suitably,
we have the weak Brill-Noether property.
Indeed in the course of the proof of Theorem \ref{thm:main},
we get an affirmative result on the weak Brill-Noether property.
\begin{prop}[{Proposition \ref{prop:wBN-L}}]\label{prop:main}
Let $v=(r,\xi,a)$ be a Mukai vector such that $r>0$, $(\xi \cdot H)>0$ and $\langle v^2 \rangle \geq 0$.
Then there is an ample divisor $L$ (depending on $v$) such that the weak Brill-Noether property holds for
${\cal M}_L(v)$.  
Thus there is $E \in {\cal M}_L(v)$ such that $E$ has at most one nonzero cohomology group.
\end{prop}

In order to explain our counter example for the weak Brill-Noether property in \cite{CNY2},
let ${\cal M}(v)$ be the moduli stack of coherent sheaves $E$
such that $v(E)=v$ and $E$ is semistable with respect to a general ample divisor.
Thus ${\cal M}(v)=\cup_L {\cal M}_L(v)$, where $L$ runs the set of general polarizations with respect to $v$
(see Definition \ref{defn:stable}).
Then the counter examples exists if ${\cal M}(v)$ is not irreducible.
In this case, if the boundary $\partial \! \Amp(X)$ of the ample cone contains an irrational ray,
we have infinitely many irreducible components ${\cal M}_L(v)$
and the weak Brill-Noether property does not hold if $L$ is close to $\partial \! \Amp(X)$.
In this paper, we shall give positive results on the preservation of stability
and the weak Brill-Noether property for a fixed ample divisor.

(I) Under the irreducibility of moduli stacks, we do not need to care about polarizations.
So we get following result from Theorem \ref{thm:main}.
\begin{cor}[{Corollary \ref{cor:stability}}]\label{cor:main}
Let $v=(r,\xi,a)$ be a primitive Mukai vector such that $r>0$ and $(\xi \cdot H)>0$.
Let $\widehat{H}$ be the ample divisor on $\widehat{X}$ which is naturally associated to $H$, that is,
the Poincar\'{e} dual of $H$. 
\begin{enumerate}
\item[(1)]
Assume that $a>0$ and $\langle v^2 \rangle \geq 2r, 2a$.
If $X$ is not a product of elliptic curves or $\xi$ is not primitive, then
$\Phi_{X \to \widehat{X}}^{{\cal P}^{\vee}}(E)^{\vee}$ is stable with respect to $\widehat{H}$ for a general
$E \in {\cal M}_H(v)$.
\item[(2)]
Assume that $(\xi^2)>0$ and $a< 0$.
Then $\Phi_{X \to \widehat{X}}^{{\cal P}^{\vee}}(E)[1]$ is stable with respect to $\widehat{H}$ for a general
$E \in {\cal M}_H(v)$.
\end{enumerate}
\end{cor}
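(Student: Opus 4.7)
The strategy is to combine Theorem \ref{thm:main} with Proposition \ref{prop:chamber}(2), which controls the dependence of Gieseker stability on the polarization and is precisely what the paragraph preceding the corollary advertises. In both cases the plan is: (a) apply Theorem \ref{thm:main} to obtain stability with respect to some auxiliary ample divisors $L$ on $X$ and $L'$ on $\widehat{X}$; (b) use the numerical hypotheses to conclude that the chambers containing $L$ and $H$ (respectively $L'$ and $\widehat{H}$) either agree generically or are separated only by walls that do not destabilize a general member.

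For case (1), I would first apply Theorem \ref{thm:main}(1) to get $L, L'$ such that $\Phi_{X \to \widehat{X}}^{\mathcal{P}^{\vee}}(E)^{\vee}$ is $L'$-stable for a general $E \in \mathcal{M}_L(v)$. Next I would compute the Mukai vector $v'$ of $\Phi_{X \to \widehat{X}}^{\mathcal{P}^{\vee}}(E)^{\vee}$; since the Fourier-Mukai transform preserves the Mukai pairing up to sign, $\langle (v')^2 \rangle = \langle v^2 \rangle$, and the rank of $v'$ equals $a$, while its discriminant part is determined by $\xi$ via Poincar\'e duality. Primitivity of $v$ is preserved by the equivalence, so $v'$ is also primitive. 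Then I would apply Proposition \ref{prop:chamber}(2) twice: once on $X$ with the inequality $\langle v^2 \rangle \geq 2r$ to replace $L$ by $H$ generically in $\mathcal{M}_L(v)$, and once on $\widehat{X}$ with $\langle (v')^2 \rangle \geq 2a$ to replace $L'$ by $\widehat{H}$ generically for sheaves of class $v'$. The composition gives the claim: a general $E \in \mathcal{M}_H(v)$ is the transform of a general $L'$-stable object of class $v'$, so its transform is $\widehat{H}$-stable.

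For case (2), I would argue analogously, using $\Phi_{X \to \widehat{X}}^{\mathcal{P}^{\vee}}(E)[1]$ in place of the dual. Now the Mukai vector $v'$ of the shifted transform has rank $-a > 0$, and the condition $(\xi^2)>0$ provides the positivity needed to apply Proposition \ref{prop:chamber}(2) on $\widehat{X}$: it controls the second coordinate of $v'$ in the same way that $\langle v^2 \rangle \geq 2r, 2a$ did in case (1), and guarantees that the chambers on the dual side are governed by controllable numerical data. The hypothesis $a<0$ replaces the positivity of $a$ used in case (1) and ensures that the shifted object has positive rank so that Gieseker stability makes sense.

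The main obstacle is matching the numerical hypotheses on $v$ with those Proposition \ref{prop:chamber}(2) imposes on $v'$ on the dual side: one must verify that $\langle v^2 \rangle \geq 2r, 2a$ (respectively $(\xi^2)>0$, $a<0$) are exactly what is required to ensure the chamber structures on both $X$ and $\widehat{X}$ are trivial or harmless along a path in the ample cone joining $L$ to $H$ (respectively $L'$ to $\widehat{H}$). Once the transformation formula for Mukai vectors is pinned down and primitivity is tracked through the equivalence, the corollary reduces to a bookkeeping exercise combining Theorem \ref{thm:main} with two applications of Proposition \ref{prop:chamber}(2).
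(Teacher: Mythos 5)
Your proposal is correct and follows essentially the same route as the paper: the printed proof is exactly ``combine Theorem \ref{thm:main} with Proposition \ref{prop:chamber}(2) on both $X$ and $\widehat{X}$,'' using that the transform of $v$ has square $\langle v^2\rangle$ and rank $\pm a$, so the hypotheses $\langle v^2\rangle\geq 2r,2a$ (resp.\ the computation $\langle v^2\rangle=(\xi^2)-2ra>-2ra\geq 2r,-2a$ when $a<0$) make both chamber structures harmless. The only detail worth pinning down in case (2) is that explicit inequality, which is precisely what the paper records.
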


(II)
It is natural to study the weak Brill-Noether property when the ample divisor is not close to 
$\partial \! \Amp(X)$.
For a Mukai vector $v=(r,\xi,a)$ with $\xi \in \Amp(X)$,
a natural choice of the ample divisor is $\xi$.
In this case, we get the following nice result.

\begin{thm}[{Theorem \ref{thm:wBN}}]\label{thm:main2}
Assume that $v=(r,dH,a)$.
\begin{enumerate}
\item[(1)] 
If $d \geq 0$,
the weak Brill-Noether property holds.
Thus there is $E \in {\cal M}_H(v)$ such that $E$ has at most one nonzero cohomology
group.
\item[(2)]
If $d<0$, then
the weak Brill-Noether property holds unless $v=(r,0,-1)e^{kH}$, where
$k$ is a negative integer.
\end{enumerate}
\end{thm}

For the proof of these results, we use Bridgeland stability conditions as in \cite{CNY2}.
Thus we study the wall crossing behavior for totally semistable walls in the space $\Stab(X)$ of Bridgeland
stability conditions.
Under the assumption of Theorem \ref{thm:main}, we shall find a stability condition such that
a general stable object $E$ is torsion free and its Fourier-Mukai transform is also torsion free.
If a general stable object $E$ is torsion free, then we can find an ample divisor
such that $E$ is Gieseker stable, and hence our main result follows.

Let us explain the organization of this paper.
In section \ref{sect:pre},
we explain several notation and basic results.
In subsection \ref{subsect:chamber}, we introduce totally semi-stable walls and
study irreducible components parameterizing stable sheaves.
We also recall a result on the irreducible components in \cite{CNY2}.
In section \ref{sect:Bridgeland},
we recall Bridgeland stability conditions on an abelian surface. 
In particular, we explain totally semi-stable walls and
the relation of adjacent chambers.
In section \ref{sect:Preserve}, we prove our result.
In subsection \ref{subsect:line},
we study wall crossing behavior along a line connecting two chambers,
one is the chamber corresponding to the Gieseker semi-stability, and 
the other is the chamber whose transform by 
$\Phi_{X \to \widehat{X}}^{{\cal P}^{\vee}}$ is related to Gieseker semi-stability. 
In subsection \ref{subsect:proof}, we prove Theorem \ref{thm:main} and
Proposition \ref{prop:main}.
In subsection \ref{subsect:dH}, we treat the case where $v=(r,dH,a)$. In particular,
we prove Theorem \ref{thm:main2}.

\begin{NB}
In Proposition \ref{prop:t_0}, 
we divide the line into at most three parts by the property of semi-stable objects: 
\begin{enumerate}
\item
a general semi-stable object is a torsion free coherent sheaf,
\item
a general semi-stable object is
a coherent sheaf with a torsion subsheaf, and 
\item
a general semi-stable object is
a two term complex of coherent sheaves.    
\end{enumerate}
\end{NB}

\section{Preliminaries.}\label{sect:pre}

In this paper, $H$ denotes an ample divisor on an abelian surface $X$.  
Let $\Amp(X)$ be the ample cone of $X$.
Then
$$
\Amp(X)=\{L \in \NS(X) \mid (L^2)>0, (L \cdot H)>0 \}.
$$
We note that
$D \in \NS(X)$ is effective, that is,
$D$ is represented by an effective divisor iff
\begin{enumerate}
\item
$(D^2)>0$ and $(D \cdot H)>0$ or
\item
$(D^2)=0$ and $(D \cdot H)>0$.
\end{enumerate}

\begin{rem}
For case (i), every divisor $C$ representing $D$ is effective. 
\end{rem}

\subsection{Fourier-Mukai transforms}
Let $\widehat{X}$ be the dual abelian surface of $X$ and
${\cal P}$ the Poincar\'{e} line bundle on $X \times \widehat{X}$.
Let $\Phi_{X \to \widehat{X}}^{{\cal P}^{\vee}}:{\bf D}(X) \to {\bf D}(\widehat{X})$
be the Fourier-Mukai transform whose kernel is ${\cal P}^{\vee}$ \cite{Mukai:1981}:
\begin{equation}
\begin{matrix}
\Phi_{X \to \widehat{X}}^{{\cal P}^{\vee}}: & {\bf D}(X) & \to & {\bf D}(\widehat{X})\\
& E & \mapsto & {\bf R}p_{\widehat{X}*}({\cal P}^{\vee} \otimes p_X^*(E)),
\end{matrix}
\end{equation}
where $p_X:X \times \widehat{X} \to X$ and
$p_{\widehat{X}}:X \times \widehat{X} \to \widehat{X}$
are projections.
We set
$\Phi:=\Phi_{X \to \widehat{X}}^{{\cal P}^{\vee}}$ and
$\widehat{\Phi}:=\Phi_{\widehat{X} \to X}^{{\cal P}}$.
We also set 
\begin{equation}
\begin{split}
\Phi^i(E):=& H^i(\Phi(E)) \in \Coh(\widehat{X}),\; E \in \Coh(X)\\
\widehat{\Phi}^i(F):=& H^i(\widehat{\Phi}(F)) \in \Coh(X),\;F \in \Coh(\widehat{X}).
\end{split}
\end{equation}

$\Phi$ induces an isometry of Hodge structures
\begin{equation}
\begin{matrix}
H^{2*}(X,{\Bbb Z}) & \to & H^{2*}(\widehat{X},{\Bbb Z})\\
(r,\xi,a) & \mapsto & (a,-\widehat{\xi},r),
\end{matrix}
\end{equation}
where $\widehat{\xi}$ is the Poincar\'{e} dual of $\xi$.
If $\Phi^i(E)$ is a torsion sheaf, then
$h^i(E \otimes {\cal P}_{|X \times \{ y \}}^{\vee})=0$
for a general $y \in \widehat{X}$.
Hence the Fourier-Mukai transform can be used to study
cohomology groups of a general stable sheaf.

The following follows from the structure of Fourier-Mukai transforms
on an abelian surface \cite{BM}, \cite{Or1}. See also \cite[Prop. 4.4]{Y:abel}.
\begin{lem}[{cf. \cite[Lem. 2.9]{CNY2}}]\label{lem:semihom}
Let $E$ be a semi-stable sheaf with an isotropic Mukai vector $v=(r,\xi,a)$.
\begin{enumerate}
\item[(1)]
Assume that $(\xi^2)>0$. 
\begin{enumerate}
\item
If $(\xi \cdot H)>0$, then $\Phi(E) \in \Coh(\widehat{X})$. 
\item
If $(\xi \cdot H)<0$, then $\Phi(E)[2] \in \Coh(\widehat{X})$.
\end{enumerate}
\item[(2)]
Assume that $(\xi^2)<0$. Then $\Phi(E)[1] \in \Coh(\widehat{X})$. 
\item[(3)]
Assume that $(\xi^2)=0$ with $\xi \ne 0$.
\begin{enumerate}
\item
If $r=0$, then $\xi$ is effective.
Hence $\Phi(E) \in \Coh(\widehat{X})$ for $a> 0$
and $\Phi(E)[1] \in \Coh(\widehat{X})$ for $a \leq 0$.
\item 
If $r>0$, then $a=0$. If $(\xi \cdot H)>0$, then
$\Phi(E)[1] \in \Coh(\widehat{X})$. 
If $(\xi \cdot H)<0$, then
$\Phi(E)[2] \in \Coh(\widehat{X})$. 
\end{enumerate} 
\end{enumerate}
\begin{NB}
\begin{enumerate}
\item
If $\Phi(F) \in \Coh(\widehat{X})$, then
$(\xi^2)>0$ and
$\xi$ is effective. 
\item
If $\Phi(F)[2] \in \Coh(\widehat{X})$, then
$-\xi$ is effective. 
 \end{enumerate} 
\end{NB}
\end{lem}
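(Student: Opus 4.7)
The plan is to handle the lemma case by case according to the sign of $(\xi^2)$, exploiting two structural features. First, the isotropy $\langle v^2\rangle=(\xi^2)-2ra=0$, which together with Mukai's structure theorem for semi-stable sheaves with isotropic Mukai vector on an abelian surface forces $E$ to be (a shift of) a semihomogeneous sheaf, so that $\Phi(E)$ is concentrated in a single cohomological degree. Second, I would use base change along $p_{\widehat X}$, identifying the generic fibre of $\Phi^i(E)$ with $H^i(X,E\otimes L_y^{-1})$ where $L_y:={\cal P}_{|X\times\{y\}}$, together with Serre duality on the abelian surface ($K_X=0$) giving $H^2(E\otimes L_y^{-1})\cong\Hom(E,L_y)^{*}$.

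For case (1), $(\xi^2)>0$ forces $r,a>0$. In subcase (a) I would observe that $\mu(E)=(\xi\cdot H)/r>0=\mu(L_y)$, so semi-stability yields $\Hom(E,L_y)=0$ for every $y$; by Serre duality and the fact that top direct images commute with base change, $\Phi^2(E)=0$ universally. The semihomogeneity of $E$ then forces $\Phi^1(E)=0$ as well, giving $\Phi(E)\in\Coh(\widehat X)$. Subcase (b) is symmetric: $\mu(E)<0$ yields $\Hom(L_y,E)=0$ for generic $y$, hence $\Phi^0(E)=0$, and semihomogeneity again kills $\Phi^1(E)$, leaving $\Phi(E)[2]\in\Coh(\widehat X)$. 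Case (2) is analogous: $(\xi^2)<0$ forces $r>0$ and $a<0$, and depending on the sign of $(\xi\cdot H)$ one of $\Phi^0(E),\Phi^2(E)$ vanishes by the stability/Serre-duality argument while the other is eliminated by semihomogeneity, leaving $\Phi(E)[1]\in\Coh(\widehat X)$.

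For case (3) with $(\xi^2)=0$ and $\xi\ne 0$: if $r=0$, then $E$ is a pure torsion sheaf of codimension one whose support represents $\xi$, so $\xi$ is effective, and I would compute $\Phi(E)$ directly by restricting along the support curve, with the cohomological degree read off from the sign of $\chi(E)=a$. If $r>0$, isotropy gives $a=0$ and $E$ is honestly semihomogeneous in the sense of Mukai, so $\Phi(E)$ sits in the degree prescribed by the sign of $(\xi\cdot H)$ together with Serre duality. The main technical obstacle throughout is eliminating the ``middle'' cohomology sheaves (chiefly $\Phi^1(E)$ in case (1), and whichever of $\Phi^0(E),\Phi^2(E)$ is not immediately killed in case (2)); this is precisely the content of Mukai's classification of semihomogeneous sheaves on an abelian surface, and cannot be deduced from elementary slope inequalities alone.
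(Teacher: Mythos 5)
Your proposal follows essentially the same route as the paper's own argument: the concentration of $\Phi(E)$ in a single cohomological degree is taken from the structure theory of semi-homogeneous sheaves (Bridgeland--Maciocia, Orlov), and that degree is then pinned down by slope comparisons with the degree-zero line bundles ${\cal P}_y$, Serre duality, and the sign of $a=\chi(E\otimes {\cal P}_y^{\vee})$. The only spots needing one more word are the $a=0$ subcases of (3), where the sign of $\chi$ does not decide between two adjacent degrees and one must additionally note that the would-be first Chern class of the excluded cohomology sheaf (namely $\mp\widehat{\xi}$) fails to be effective --- a point the paper's argument also leaves implicit.
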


\begin{rem}\label{rem:semi-hom}
$E$ is a semi-stable sheaf with an isotropic Mukai vector if and only if
$E$ is a semi-homogeneous sheaf.
\end{rem}

\begin{NB}
\begin{proof}
We note that there is a number $k \in \{ 0,1,2 \}$ such that $\Phi(F)[k]$
is a semi-homogeneous sheaf (\cite[Cor. 2.10]{BM} or \cite{Or1}). 
If $(\xi^2)=2ra \ne 0$, then 
$\rk \Phi(F)=a \geq 0$ implies $(\xi^2) \geq 0$.
Assume that $(\xi^2)>0$.
Then $\xi$ is ample or $-\xi$ is ample.
Moreover $\Phi^2(F)=0$ if $\xi$ is ample and
$\Phi^0(F)=0$ if $-\xi$ is ample. 

We next assume that $(\xi^2)=0$ and $\xi \ne 0$.
Then $\xi$ is nef or $-\xi$ is nef and $ra=0$.
Assume that $r=0$. Then $\xi$ is effective.
Hence $\Phi(E) \in \Coh(\widehat{X})$ for $a>0$ and
$\Phi(E)[1] \in \Coh(\widehat{X})$ for $a \leq 0$.
Assume that $a=0$. 
If $\xi$ is nef, then $\Phi^2(F)=0$.
If $-\xi$ is nef, then $h^0(F \otimes {\cal P}_t^{\vee})=0$
for all $t \in \widehat{X}$.
Therefore our claim holds.
\begin{NB2}
There is a covering $\pi:Y \to X$ and a line bundle $L$ on $Y$
such that $\pi_*(L)=F$.
If $(c_1(L)^2) >0$, then
$L$ is ample or $L^{\vee}$ is ample.
If $L$ is ample, then $0 \ne H^0(Y,L)=H^0(X,F)$.
$L^{\vee}$ is ample, then $0 \ne H^0(Y,L)=H^2(X,F)$.
If $(c_1(L)^2)<0$, then
$0 \ne H^1(Y,L)=H^1(X,F)$.
Assume that $(c_1(L)^2)=0$.
Then $c_1(L)$ is nef or $-c_1(L)$ is nef.
If $c_1(L)$ is nef, then
there is $t \in \widehat{X}$ such that
$0 \ne H^1(Y,L \otimes \pi^*({\cal P}_t))=H^1(X,F \otimes {\cal P}_t)$.
Hence $\Phi(E) \not \in \Coh(\widehat{X})$.
If $-c_1(L)$ is nef, then
there is $t \in \widehat{X}$ such that
$0 \ne H^2(Y,L \otimes \pi^*({\cal P}_t))=H^2(X,F \otimes {\cal P}_t)$.
\end{NB2}
\end{proof}
\end{NB}

\begin{NB}
For a divisor $\xi$ with $(\xi^2)=0$,
$\Phi({\cal O}_X(\xi)) \ne 0$ implies there is $t \in \widehat{X}$ such that 
$h^0({\cal O}_X(\xi) \otimes {\cal P}_t)>0$ or
$h^2({\cal O}_X(\xi) \otimes {\cal P}_t)>0$.
Hence $\xi$ is nef or $-\xi$ is nef.
\end{NB}


\subsection{The dependence of Gieseker semistability on polarizations}\label{subsect:chamber}

For the $\mu$-semi-stability of coherent sheaves, we have the notion of
wall and chamber in $\Amp(X)_{\Bbb R}$.
An ample divisor $H \in \Amp(X)$ is general, if $H$ is in a chamber, which is equivalent to the following
definition. 

\begin{defn}
An ample divisor $H$ is general with respect to $v$
if 
$$
\frac{(c_1(E_1) \cdot H)}{\rk E_1}=\frac{(c_1(E) \cdot H)}{\rk E}
\iff \frac{v(E_1)}{\rk E_1}=\frac{v(E)}{\rk E}
$$ 
for any subsheaf $E_1$ of a $\mu$-semi-stable sheaf $E$ with $v(E)=v$.
\end{defn}

\begin{defn}\label{defn:stable}
We set 
$$
\Amp(X)_v:=\{ H \in \Amp(X) \mid \text{$H$ is general with respect to $v$} \}.
$$
Let
$$
{\cal M}(v):=\cup_{H \in \Amp(X)_v } {\cal M}_H(v)
$$
be the moduli stack of semi-stable sheaves $E$ with $v(E)=v$.
\end{defn}
${\cal M}(v)$ is a normal stack of dimension $\langle v^2 \rangle+1$.
Moreover ${\cal M}(v)$ is irreducible if $v$ is not primitive.
Assume that $v$ is primitive.
Then ${\cal M}(v)$ is smooth, but it is not irreducible in general.

\begin{NB}
For a general ample divisor $L$ with respect to $v$,
${\cal M}_L(v)$ consists of stable sheaves.
By the openness of stability, ${\cal M}_L(v)$ is an open substack of ${\cal S}(v)$.
Hence the closure of 
${\cal M}_L(v)$ in ${\cal S}(v)$ is an irreducible component. 
\end{NB}

\begin{prop}\label{prop:chamber}
Let $H_1$ and $H_2$ be ample divisors which are general with respect to $v=(r,\xi,a)$.
\begin{enumerate}
\item[(1)]
Assume that ${\cal M}_{H_1}(v) \cap {\cal M}_{H_2}(v)=\emptyset$.
Then there are isotropic vector $v_1=(r_1,\xi_1,a_1)$, $v_2=(r_2,\xi_2,a_2)$ and a decomposition
\begin{equation}\label{eq:chamber}
v=\ell v_1+v_2, \; \langle v_1,v_2 \rangle=1,\; r_1,r_2>0,\;
 ((r_2 \xi_1-r_1 \xi_2) \cdot H)=0, 
\end{equation}
where $H$ is an ample divisor. 
\item[(2)]
If $\langle v^2 \rangle \geq 2r$, then 
${\cal M}_{H_1}(v) \cap {\cal M}_{H_2}(v) \ne \emptyset$.
In particular ${\cal M}(v)$ is irreducible.
\end{enumerate}
\end{prop}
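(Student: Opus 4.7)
The plan is to exploit the wall-and-chamber structure for $\mu$-semistability in $\Amp(X)_{\mathbb R}$ together with the rigidity of isotropic Mukai vectors on an abelian surface.

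For (1), I would connect $H_1$ and $H_2$ by a segment inside $\Amp(X)_{\mathbb R}$ and look at the first wall $H$ crossed when leaving the chamber of $H_1$, chosen generic on that wall so that only one proportionality condition of the form $(c_1(F)\cdot H)/\rk(F) = (\xi \cdot H)/r$ is active. If some sheaf $E$ were stable for both $H_1$ and $H_2$, openness of stability would produce an $H'$ on the segment with $E \in \cal M_{H'}(v)$, contradicting the disjointness hypothesis. Thus a generic $E \in \cal M_{H_1}(v)$ must become strictly $\mu_H$-semistable, and it admits a Jordan--H\"older filtration for $\mu_H$-semistability whose factors $F_1,\dots,F_s$ are $\mu_H$-stable of common slope, with at least one factor not proportional to $v$. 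A dimension count for the stratum parametrising such extensions, compared with $\dim\cal M_{H_1}(v) = \langle v^2 \rangle+1$, forces $\langle v(F_i)^2 \rangle = 0$ for every factor, so each $F_i$ is semi-homogeneous (Remark \ref{rem:semi-hom}). A further incidence analysis reduces the filtration to two distinct isotropic Mukai vectors $v_1, v_2$ appearing with multiplicities $\ell$ and $1$, and the pairing is pinned to $\langle v_1, v_2 \rangle = 1$ by matching $\langle v^2 \rangle = 2\ell\langle v_1,v_2\rangle$ against the dimension bookkeeping; the condition $((r_2\xi_1-r_1\xi_2)\cdot H)=0$ is then exactly the equality of $\mu_H$-slopes at the wall.

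For (2), suppose a decomposition $v = \ell v_1 + v_2$ as in (1) exists with $\ell \geq 1$, $r_i \geq 1$, $\langle v_i^2 \rangle = 0$ and $\langle v_1, v_2 \rangle = 1$. Then
\[
\langle v^2 \rangle = 2\ell\langle v_1,v_2\rangle = 2\ell, \qquad r = \ell r_1+r_2 \geq \ell+1,
\]
so $\langle v^2 \rangle = 2\ell \leq 2r-2 < 2r$, contradicting $\langle v^2 \rangle \geq 2r$. Hence no such decomposition exists, and (1) gives $\cal M_{H_1}(v) \cap \cal M_{H_2}(v) \neq \emptyset$ for any two general $H_1, H_2$. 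Irreducibility of $\cal M(v)$ then follows formally: each $\cal M_H(v)$ is an irreducible open substack of $\cal M(v)$ (as recalled in the introduction), pairwise intersection of these opens forces them to lie in a single irreducible component, and the cover $\cal M(v) = \bigcup_H \cal M_H(v)$ finishes the argument.

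The main obstacle is the reduction in part (1): extracting the exact numerical form $v = \ell v_1 + v_2$ with $\langle v_1, v_2 \rangle = 1$ from the wall-crossing picture. Comparing the dimension of the stratum of $H_1$-stable sheaves that become strictly $\mu_H$-semistable against $\dim \cal M_{H_1}(v)$ is what forces the JH factors to be semi-homogeneous, but pinning down the length of the filtration and the value of the Mukai pairing requires careful bookkeeping in the Mukai lattice, with additional attention needed when $v$ is not primitive so that strict $\mu$-semistability alone does not obstruct simultaneous Gieseker-stability across the wall.
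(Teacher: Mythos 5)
Your proposal follows essentially the same route as the paper: part (2) is verbatim the paper's computation ($\langle v^2 \rangle = 2\ell$ and $r=\ell r_1+r_2>\ell$ force $2r>\langle v^2\rangle$, plus the formal patching of the irreducible opens ${\cal M}_H(v)$), and part (1) is the same wall-crossing-plus-dimension-count argument that the paper simply delegates to \cite[3.3, Lem.~3.9]{KY}. The ``careful bookkeeping'' you flag as the main obstacle in (1) --- forcing the Jordan--H\"{o}lder factors into exactly two isotropic classes occurring with multiplicities $\ell$ and $1$ and with $\langle v_1,v_2\rangle=1$ --- is precisely the content of the cited argument, so your sketch sits at the same level of detail as the paper's own proof.
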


\begin{proof}
(1)
We may assume that $H_1$ and $H_2$ are separated by a wall $W$. 
Let $H$ be a general ample divisor on $W$.
By the argument in \cite[3.3]{KY}, the claim follows. 
For \cite[Lem. 3.9]{KY}, we would like to remark that
the lattice $L$ in \cite[Lem. 3.9]{KY} is the orthogonal complement of $H$
(i.e., $L=H^\perp$) and the condition $\xi_i \in L$ should be replaced by
$r_2 \xi_1-r_1 \xi_2 \in L$.

(2)
Assume that there is a decomposition of $v$ in \eqref{eq:chamber}.
Since $\langle v^2 \rangle=2\ell$ and
$r=\ell r_1+r_2>\ell$,
$2r>\langle v^2 \rangle$.
Hence if $\langle v^2 \rangle \geq 2r$, then 
${\cal M}_{H_1}(v) \cap {\cal M}_{H_2}(v) \ne \emptyset$.
\end{proof}

\begin{defn}
A wall $W (\subset \Amp(X)_{\Bbb R})$ for $v$ is totally semi-stable if
$W=(r_2 \xi_1-r_1 \xi_2)^\perp$ for a decomposition \eqref{eq:chamber}. 
\end{defn}

The following proposition shows that there are infinitely many totally semi-stable walls in general.

\begin{prop}[{ \cite[Prop. 4.9]{CNY2}}]
Assume that $\rho(X) \geq 3$ or $\rho(X)=2$ and $X$ does not contain an elliptic curve.
If there is a decomposition \eqref{eq:chamber}, then
there are infinitely many similar decompositions, and hence there are infinitely many 
${\cal M}_L(v)$ $(L \in \Amp(X)_v)$ defining irreducible components 
in ${\cal M}(v)$.
Moreover there are infinitely many ${\cal M}_L(v)$ satisfying
$\rk H^0(\Phi_{X \to \widehat{X}}^{{\cal P}^{\vee}}(E))>0$ and
$\rk H^1(\Phi_{X \to \widehat{X}}^{{\cal P}^{\vee}}(E))>0$ 
for all $E \in {\cal M}_L(v)$. 
\end{prop}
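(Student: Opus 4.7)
The plan is purely lattice-theoretic, working inside the algebraic Mukai lattice $H^{2*}(X,\mathbb{Z})_{\alg}$, which has signature $(2,\rho(X))$. Given a fixed decomposition $v = \ell v_1 + v_2$ as in \eqref{eq:chamber}, the pair $(v_1,v_2)$ spans a primitively embedded hyperbolic plane $U \subset H^{2*}(X,\mathbb{Z})_{\alg}$, and its orthogonal complement $U^\perp$ has rank $\rho(X)$ and signature $(1,\rho(X)-1)$. The mechanism for producing new decompositions is the following: given any $(p,q) \in \mathbb{Z}^2$ with $p + \ell q = 1$ and any $a \in U^\perp$ with $\langle a^2\rangle = -2pq$, set
\begin{equation*}
v_1' := p v_1 + q v_2 + a, \qquad v_2' := \ell(1-p) v_1 + (1 - \ell q) v_2 - \ell a.
\end{equation*}
A direct computation shows $\langle v_1'^2\rangle = \langle v_2'^2\rangle = 0$, $\langle v_1',v_2'\rangle = 1$, and $\ell v_1' + v_2' = v$, so $(v_1',v_2')$ gives another decomposition of the same form.

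The first main step is to produce infinitely many such triples $(p,q,a)$. If $\rho(X) \geq 3$, then $U^\perp$ is an indefinite lattice of rank $\geq 3$ and signature $(1,\geq 2)$; I would use Hasse--Minkowski together with the fact that an isotropic indefinite lattice of rank $\geq 3$ has infinite orthogonal group to produce infinitely many primitive isotropic $a \in U^\perp$, already handling the case $q=0$. If $\rho(X) = 2$ and $X$ contains no elliptic curve, $U^\perp$ has rank $2$ and signature $(1,1)$, and the hypothesis that $\NS(X)$ contains no isotropic class translates into the assertion that the binary form on $U^\perp$ is anisotropic and hence equivalent to a Pell-type form; such a form represents infinitely many integers, so by varying $q \in \mathbb{Z}$ and solving $\langle a^2 \rangle = 2\ell q^2 - 2q$ inside $U^\perp$, I obtain infinitely many $a$ and hence infinitely many decompositions.

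The second step is to check that for all but finitely many of the resulting decompositions: (a) the ranks $r_1', r_2'$ are positive, and (b) the wall $(r_2'\xi_1' - r_1'\xi_2')^\perp$ meets $\Amp(X)_{\mathbb{R}}$. Positivity of $r_i'$ follows because $r_i'$ grows linearly in our parameter while the constraint forces only quadratic growth of $\langle a^2\rangle$, and positivity is open. For (b), the identity $((r_2'\xi_1' - r_1'\xi_2') \cdot H) = 0$ for a suitable ample class reduces, via a short calculation using $a \in U^\perp$, to the analogous identity for the original decomposition. By Proposition~\ref{prop:chamber}, distinct walls produce distinct irreducible components ${\cal M}_L(v)$.

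The final step is the cohomology assertion. By Lemma~\ref{lem:semihom}, a semi-homogeneous sheaf with Mukai vector $w = (r,\xi,a)$, $r>0$, $(\xi \cdot H) > 0$, satisfies $\Phi(E) \in \Coh(\widehat X)$ when $a > 0$, and $\Phi(E)[1] \in \Coh(\widehat X)$ when $a < 0$ (or $a = 0$). Since $a = \ell a_1' + a_2'$ is fixed, I would show that among the infinitely many decompositions constructed above, infinitely many satisfy $a_1' > 0$ and $a_2' < 0$ (while keeping both $(\xi_i' \cdot H') > 0$ for some ample $H'$): this is a sign condition that is open in the construction and is easy to satisfy by scaling the parameter. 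A general semistable sheaf $E$ for the corresponding $L$ is an extension of pieces with Mukai vectors $v_1'$ and $v_2'$, so the two cohomology sheaves $H^0(\Phi(E))$ and $H^1(\Phi(E))$ are simultaneously of positive rank.

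The main obstacle is Step~1 in the case $\rho(X)=2$ without elliptic curves: the elementary construction with $a$ isotropic in $U^\perp$ is unavailable, and one must really use the Pell-equation representability of the binary form on $U^\perp$; precisely here the elliptic-curve hypothesis is used to exclude the degenerate case where $U^\perp$ splits off an isotropic line that obstructs the infinite-family argument.
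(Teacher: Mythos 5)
Your generating identity is correct and is a reasonable starting point: with $p+\ell q=1$ and $a\in U^{\perp}$ satisfying $\langle a^2\rangle=-2pq$, the vectors $v_1'=pv_1+qv_2+a$ and $v_2'=v-\ell v_1'$ are indeed isotropic with $\langle v_1',v_2'\rangle=1$ and $\ell v_1'+v_2'=v$. Be aware, however, that the paper does not reprove this proposition at all: its proof consists of the remark that the argument of \cite[Prop.~4.9]{CNY2} uses only the existence of one decomposition \eqref{eq:chamber}, so all the substance is outsourced to that reference. You are therefore reconstructing the argument from scratch, and as written it has genuine gaps exactly at the points where the arithmetic and the positivity constraints must be checked.

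Concretely: (a) for $\rho(X)=3$ or $4$ the lattice $U^{\perp}$ is indefinite of rank $3$ or $4$, and Hasse--Minkowski/Meyer does \emph{not} guarantee that such a form is isotropic (e.g.\ $x^2+y^2-3z^2$ is anisotropic over ${\Bbb Q}$), so the ``$q=0$, $a$ isotropic'' mechanism is not available in general and you are thrown back on representing $2\ell q^2-2q$, the same problem as in the rank-two case. (b) In the $\rho(X)=2$ case you assert, but do not prove, that the anisotropic binary form on $U^{\perp}$ represents $2\ell q^2-2q$ for infinitely many $q$; an indefinite anisotropic binary form represents only a sparse, congruence-constrained set of integers, and there is no a priori reason that a given quadratic sequence meets this set even once for $q\ne 0$. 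You flag this yourself as ``the main obstacle,'' but in that case it \emph{is} the statement to be proved, so the proof is incomplete precisely where the hypothesis on elliptic curves enters. (c) The positivity $r_1',r_2'>0$ required by \eqref{eq:chamber} is not a matter of linear versus quadratic growth: for $q=0$ it forces the rank component of $a$ to lie in the fixed bounded interval $(-r_1,\,r_2/\ell)$, and you give no reason why infinitely many of your isotropic vectors $a$ have rank component in that window (a linear functional is generally unbounded on the integral points of a quadric cone). (d) The final assertion is for \emph{all} $E\in{\cal M}_L(v)$, whereas your argument only treats objects that are extensions of pieces with Mukai vectors $\ell_1v_1'$ and $\ell_2v_2'$; to cover every $E$ you must invoke the totally semi-stable wall mechanism (every object in the adjacent chamber has Harder--Narasimhan factors with these Mukai vectors, as in Proposition~\ref{prop:opp} and Proposition~\ref{prop:t_0}), together with the vanishing $H^0(\Phi(E_2))=H^1(\Phi(E_1))=0$ from Lemma~\ref{lem:semihom}, which you use implicitly but do not verify for your constructed $v_1',v_2'$.
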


\begin{proof}
In the proof of \cite[Prop. 4.9]{CNY2}, we only used that there is a decomposition \eqref{eq:chamber}.
So the claim follows from \cite{CNY2}.
\end{proof}

\section{Bridgeland stability.}\label{sect:Bridgeland}

\subsection{Stability conditions on an abelian surface}

Let us briefly recall some notation on stability conditions of Bridgeland
for an abelian surface $X$.
For more details, see \cite{Br:stability}, \cite{Br:3} and \cite{Br:4}.
A stability condition $\sigma=(Z_\sigma,{\cal P}_\sigma)$ 
on ${\bf D}(X)$
consists of a group homomorphism $Z_\sigma: {\bf D}(X) \to {\Bbb C}$
and a slicing ${\cal P}_\sigma$ of ${\bf D}(X)$ such that if
$0 \ne E \in {\cal P}_\sigma(\phi)$ then 
$Z_\sigma(E) =m(E)\exp(\pi \sqrt{-1} \phi)$ 
for some $m(E) \in {\Bbb R}_{>0}$.
The set of stability conditions has a structure of complex 
manifold.
We denote this space by $\Stab(X)$.

\begin{defn}\label{defn:B-stable}
\begin{enumerate}
\item[(1)]
A $\sigma$-semi-stable object $E$ of phase $\phi$ is
an object of  ${\cal P}_\sigma(\phi)$.
If $E$ is a simple object of ${\cal P}_\sigma(\phi)$, then
$E$ is $\sigma$-stable. 
\item[(2)]
For $v \in H^*(X,{\Bbb Z})_{\alg}$,
we denote the moduli stack of $\sigma$-semi-stable objects $E$ with 
$v(E)=v$ by ${\cal M}_\sigma(v)$,
where we usually choose 
$\phi:=\mathrm{Im}(\log Z_\sigma(v))/\pi \in (-1,1]$.
\end{enumerate}
\end{defn}

By \cite[Prop. 5.3]{Br:stability},
giving a stability condition $\sigma$ is
the same as giving a bounded $t$-structure on ${\bf D}(X)$ 
and a stability function $Z_\sigma$ on its heart ${\cal A}_\sigma$
with the Harder-Narasimhan property.
For $\sigma=(Z_\sigma,{\cal P}_\sigma)$,
we have the relation ${\cal A}_\sigma ={\cal P}_\sigma((0,1])$,
where ${\cal P}_\sigma((0,1])$ is the subcategory of ${\bf D}(X)$
generated by semi-stable objects $E \in {\cal P}_\sigma(\phi)$ with 
$\phi \in (0,1]$.  
Since the pair $(Z_\sigma,{\cal A}_\sigma)$
defines a stability condition, 
we also use the symbol $\sigma=(Z_\sigma,{\cal A}_\sigma)$
to denote a stability condition.
For a $\sigma$-semi-stable object $E \in 
{\cal P}_\sigma(\phi)$, we set $\phi_\sigma(E)=\phi$.
For $E \in {\cal P}_\sigma((0,1])$,
we also set $\phi_\sigma(E) \in (0,1]$ 
by $Z_\sigma(E)=m(E)\exp(\pi \sqrt{-1} \phi_\sigma(E))$. 

Let $\Stab(X)$ be the space of stability conditions.
For an equivalence $\Phi:{\bf D}(X) \to {\bf D}(X')$,
we have an isomorphism $\Phi:\Stab(X) \to \Stab(X')$
such that 
$\Phi(\sigma)$ $(\sigma \in \Stab(X))$ 
is a stability condition given by
\begin{equation}\label{eq:FM-action}
\begin{split}
Z_{\Phi(\sigma)}=& Z_\sigma \circ \Phi^{-1}:{\bf D}(X') \to {\Bbb C},\\
{\cal P}_{\Phi(\sigma)}(\phi)=& \Phi({\cal P}_\sigma(\phi)).
\end{split}
\end{equation}
We also have an action of the universal covering 
$\widetilde{\GL}_2^+({\Bbb R})$ of $\GL_2^+({\Bbb R})$ on 
$\Stab(X)$.
Since ${\Bbb C}^{\times} \subset  \GL_2^+({\Bbb R})$,
we have an injective homomorphism
${\Bbb C} \to  \widetilde{\GL}_2^+({\Bbb R})$.
Thus 
we have an action of $\lambda \in {\Bbb C}$ on
$\Stab(X)$. 
For a stability condition $\sigma \in \Stab(X)$, 
$\lambda(\sigma)$ is 
given by
\begin{equation}\label{eq:C-action}
\begin{split}
Z_{\lambda(\sigma)}=&
\exp(-\pi  \sqrt{-1} \lambda)Z_\sigma\\
{\cal P}_{\lambda(\sigma)}(\phi)=&
{\cal P}_\sigma(\phi+\mathrm{Re}\lambda).
\end{split}
\end{equation}

\subsection{Geometric stability conditions}
We set
$$
{\cal H}:=\NS(X)_{\Bbb R} \times \Amp(X)_{\Bbb R}.
$$
For $(\beta,\omega) \in {\cal H}$,
Bridgeland \cite{Br:3} constructed a stability condition
$\sigma_{(\beta,\omega)}=(Z_{(\beta,\omega)},{\cal A}_{(\beta,\omega)})$
which is characterized
by the stability of $k_x$ ($x \in X$), where
$Z_{(\beta,\omega)}(\bullet)=
\langle e^{\beta+\sqrt{-1}\omega},v(\bullet) \rangle:
{\bf D}(X) \to {\Bbb C}$ is the stability function and
${\cal A}_{(\beta,\omega)}$ 
is an abelian category which is a tilting of $\Coh(X)$
by a torsion pair $({\cal T}_{(\beta,\omega)},{\cal F}_{(\beta,\omega)})$:
\begin{enumerate}
\item
${\cal T}_{(\beta,\omega)}$ is a full subcategory of $\Coh(X)$ generated by
torsion sheaves and $\mu$-stable sheaves $E$ with 
$((c_1(E)-\rk E \beta) \cdot \omega)>0$ and
\item
${\cal F}_{(\beta,\omega)}$ is a full subcategory of $\Coh(X)$ generated by
$\mu$-stable sheaves $E$ with 
$((c_1(E)-\rk E \beta) \cdot \omega) \leq 0$.
\end{enumerate}
This kind of stability conditions are called geometric.
To be more precise,
for a geometric stability condition
$\sigma=(Z_\sigma,{\cal A}_\sigma)$,
we require that
$\mathrm{Re}\mho, \mathrm{Im}\mho$ span a positive definite
2-plane of $H^*(X,{\Bbb R})$,
where $Z_\sigma(\bullet)=\langle \mho,\bullet \rangle$
with $\mho \in H^*(X,{\Bbb Q})_{\alg} \otimes {\Bbb C}$. 
Up to the action of $\widetilde{\GL}_2^+({\Bbb R})$, 
there is $(\beta,\omega)$ such that
$\sigma=\sigma_{(\beta,\omega)}$.
We set ${\cal M}_{(\beta,\omega)}(v):=
{\cal M}_{\sigma_{(\beta,\omega)}}(v)$.
\begin{NB}
Old:
$\sigma \in \Stab(X)$ satisfies  
(1)
$Z_\sigma(\bullet)=\langle \mho,\bullet \rangle$
with $\langle \mathrm{Re}\mho^2 \rangle=
\langle \mathrm{Im}\mho^2 \rangle$, 
$\langle \mathrm{Re}\mho,\mathrm{Im}\mho \rangle=0$
and
(2)
$k_x$ ($x \in X$) are $\sigma$-stable
with the phase $\phi_{\sigma}(k_x)=1$. Then
$\sigma=\sigma_{(\beta,\omega)}$ for some $(\beta,\omega)$.
\end{NB}

We collect some results in \cite{YY2} and \cite{Y}.

\begin{defn}\label{defm:moduli}
Let ${\cal M}_{(\beta,\omega)}(v)$ be the moduli stack of
$\sigma_{\beta,\omega}$-stable objects $E$ with $v(E)=v$. 
\end{defn}

\begin{rem}\label{rem:moduli}
${\cal M}_{(\beta,\omega)}(v)$ is well-defined as a scheme, but parameterizing
objects are only determined up shift $[2k]$ $(k \in {\Bbb Z})$.
Indeed if $E$ is $\sigma_{(\beta,\omega)}$-stable, then
$E[k]$ is $\sigma_{(\beta,\omega)}$-stable with $v(E[k])=(-1)^k v(E)$.
In order to fix the shift, we need to fix the phase of $E$.
\end{rem}

For a fixed Mukai vector $v \in H^{2*}(X,{\Bbb Z})$, there exists a locally finite set of \emph{walls} (real codimension one submanifolds with boundary) in ${\cal H}$, 
depending only on $v$, with the following properties (see \cite{Br:3}):
\begin{enumerate}
\item[(1)]
When $\sigma$ varies in a chamber, that is, a connected component of 
the complement of the union of walls, the sets of 
$\sigma$-semistable and $\sigma$-stable objects of class $v$ do not change. 
If $v$ is primitive, then $\sigma$-stability coincides with $\sigma$-semistability 
for $\sigma$ in a chamber for $v$.

\item[(2)]
 When $\sigma$ lies on a wall $W \subset {\cal H}$, 
there is a $\sigma$-semistable object of class $v$ that is unstable in one of the adjacent chambers 
and semistable in the other adjacent chamber.
 If $\sigma= (Z_\sigma, {\cal A}_\sigma)$ lies on a wall, there exists a $\sigma$-semistable object $E$ 
of Mukai vector $v$ and a subobject $F \subset E$ in ${\cal A}_\sigma$ 
with the same $\sigma$-slope but 
$v(F) \not \in {\Bbb R} v$.

\item[(3)]
 Given a polarization $H\in\Amp(X)$ and the Mukai vector $v$ of an $H$-Gieseker semistable sheaf, 
there exists a chamber ${\cal C}$ for $v$, the \emph{Gieseker chamber}, 
where the set of $\sigma$-semistable objects of class 
$v$ coincides with the set of $H$-Gieseker semistable sheaves \cite[Prop. 14.2]{Br:3}.
\end{enumerate}

\begin{defn}\label{defn:W}
For a Mukai vector $v_1$,
let $W_{v_1}$ be a closed subset of ${\cal H}$ such that
$$
W_{v_1}=\{(\beta,\omega) \in {\cal H} \mid {\Bbb R} Z_{\sigma_{(\beta,\omega)}}(v)=
 {\Bbb R}Z_{\sigma_{(\beta,\omega)}}(v_1) \}.
$$
\end{defn}

For a wall $W$ in ${\cal H}$,
there is $v_1$ such that $W= W_{v_1}$.  
We note that a Mukai vector $v_1$ defines a wall in the space of stability conditions
if and only if
\begin{equation}\label{eq:def-wall}
\langle v_1^2 \rangle \geq 0, \langle (v-v_1)^2 \rangle \geq 0, \langle v_1,v-v_1 \rangle>0, 
\langle v_1^2 \rangle  \langle v^2 \rangle>\langle v_1,v \rangle^2
\end{equation}
(see \cite[Def. 1.2 and Prop. 1.3]{Y}).

\begin{lem}\label{lem:nowall}
Let $v=(r,\xi,a)$ be an isotropic Mukai vector such that $(\xi \cdot H) \geq 0$.
Then there is no wall, that is, ${\cal M}_{(\beta,\omega)}(v)$ is independent
of the choice of $(\beta,\omega)$.
In particular 
$$
{\cal M}_{(\beta,\omega)}(v)=\{E[k] \mid E \in {\cal M}_H(v) \},
$$
where
$k=0,1$ according as $r \geq 0$ or $r<0$.  
\end{lem}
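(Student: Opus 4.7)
My plan is to exploit the isotropy of $v$ at the level of the wall criterion \eqref{eq:def-wall}. The last of those inequalities reads
\[
\langle v_1^2 \rangle \langle v^2 \rangle > \langle v_1,v \rangle^2,
\]
and since $\langle v^2 \rangle = 0$ the left-hand side vanishes while the right-hand side is nonnegative. Consequently no Mukai vector $v_1$ can define a wall for $v$ in ${\cal H}$, and by property (1) of the wall-and-chamber decomposition the set of $\sigma_{(\beta,\omega)}$-semistable objects of class $v$ is the same for all $(\beta,\omega) \in {\cal H}$.

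To identify this constant stack, I would pass to a geometric chamber where the comparison is immediate. For $r \geq 0$, the hypothesis $(\xi \cdot H) \geq 0$ together with isotropy ensures that $v$ is the Mukai vector of an $H$-Gieseker semistable sheaf (for instance a semi-homogeneous one, cf.\ Remark \ref{rem:semi-hom}). Property (3) then furnishes a Gieseker chamber in which $\sigma_{(\beta,\omega)}$-semistability reduces to $H$-Gieseker semistability, and the wall-free constancy established above forces ${\cal M}_{(\beta,\omega)}(v) = {\cal M}_H(v)$ for every $(\beta,\omega) \in {\cal H}$. For $r < 0$ I would reduce to the previous case via the shift $E \mapsto E[1]$, which negates Mukai vectors and bijects $\sigma_{(\beta,\omega)}$-semistable objects of class $v$ with those of class $-v$; the assertion of the lemma is then the Gieseker identification applied to $-v$ and shifted by $[1]$.

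The only real bookkeeping issue, rather than a genuine obstacle, is fixing the shift ambiguity described in Remark \ref{rem:moduli}: normalising the phase of $E$ to lie in $(0,1]$ canonically selects $k = 0$ when $r \geq 0$ and $k = 1$ when $r < 0$, which is exactly the convention in the statement. Beyond this, no wall-crossing analysis and no additional input is needed, since once isotropy rules out every potential wall the statement reduces to a single invocation of the Gieseker chamber.
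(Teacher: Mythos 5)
Your argument is correct and is essentially the one the paper intends: Lemma \ref{lem:nowall} is stated without proof precisely because the last inequality of \eqref{eq:def-wall} fails identically when $\langle v^2\rangle=0$, and the identification of the (single) chamber with the Gieseker chamber, together with the shift convention of Remark \ref{rem:moduli} for $r<0$, is exactly the implicit justification. Nothing in your write-up deviates from or adds a gap to that route.
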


\subsection{Totally semi-stable walls}\label{subsect:tss-wall}

\begin{defn}
A wall $W$ is totally semi-stable if
${\cal M}_{\sigma_+}(v) \cap {\cal M}_{\sigma_-}(v) =\emptyset$,
where $\sigma_\pm$ are in the adjacent two chambers. 
\end{defn}

We set
$$
{\cal I}_1:=\{ v_1 \mid \langle v_1^2 \rangle=0, \langle v,v_1 \rangle=1 \}.
$$

\begin{NB}
A chamber ${\cal C}$ is a connected component of
$$
{\cal H} \setminus \bigcup_{v_1 \in {\cal I}_1} W_{v_1} 
$$
where $v_1$ defines a totally semi-stable wall.
By \cite[Lem. 3.19]{Y},
$W_{v_1}$ does not intersect with other walls.
\end{NB}

\begin{prop}[{\cite[Thm. 5.3.5]{MYY:2018}, see also \cite[Lem. 5.3.4]{MYY:2018} and \cite[Thm. 4.2]{NY:2020}}]\label{prop:tot-wall}
$W$ is a totally semi-stable wall if and only if $W=W_{v_1}$ $(v_1 \in {\cal I}_1)$.
In particular there is no totally semi-stable walls if $v$ is not primitive.
\end{prop}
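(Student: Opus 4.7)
The plan is to establish the main equivalence in two directions and derive the ``in particular'' clause as a direct corollary: if $v = m v'$ is non-primitive with $m \geq 2$, then $\langle v, v_1 \rangle \in m\bb{Z}$ for every Mukai vector $v_1$, so it cannot equal $1$; hence ${\cal I}_1 = \emptyset$ and the equivalence rules out totally semi-stable walls.

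For the \emph{if} direction, I would fix $v_1 \in {\cal I}_1$ and $\sigma = (Z_\sigma,{\cal A}_\sigma)$ on the wall $W_{v_1}$. The key input is Lemma \ref{lem:nowall} applied to the isotropic class $v_1$: after replacing $v_1$ by a shift so that $(\xi_1 \cdot H) \geq 0$, the stack ${\cal M}_\sigma(v_1)$ is non-empty and consists of shifted semi-homogeneous sheaves (Remark \ref{rem:semi-hom}); pick $F \in {\cal M}_\sigma(v_1)$ of the same phase as $v$. For any $\sigma$-semistable $E$ with $v(E) = v$, Riemann--Roch on the abelian surface yields $\chi(F,E) = -\langle v_1, v \rangle = -1$, so at least one of $\Hom_{{\cal A}_\sigma}(F,E)$, $\Hom_{{\cal A}_\sigma}(E,F)$ is non-zero (using Serre duality and the stability of the JH factors of $E$ at $\sigma$ to control $\Ext^2$). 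Such a morphism exhibits a JH factor of $E$ of class $v_1$, and because $\phi_{\sigma_\pm}(v_1)$ separates from $\phi_{\sigma_\pm}(v)$ in opposite directions across $W_{v_1}$, this factor destabilizes $E$ on one adjacent side. Thus no single $E$ is simultaneously $\sigma_+$- and $\sigma_-$-stable, and $W_{v_1}$ is totally semi-stable.

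The \emph{only if} direction is the main obstacle. Suppose $W$ is totally semi-stable and write $W = W_{v_1}$. I would argue contrapositively: if $v_1 \notin {\cal I}_1$ then either $\langle v_1^2 \rangle > 0$ or $\langle v_1^2 \rangle = 0$ with $\langle v, v_1 \rangle \geq 2$. In either case the strategy is to construct a $\sigma$-stable $E$ with $v(E) = v$ that remains stable on both sides of $W$: choose $\sigma$-stable $A$, $B$ whose Mukai vectors span the rank-two sublattice defining the wall and satisfy $v(A) + v(B) = v$, form a sufficiently generic non-split extension $0 \to A \to E \to B \to 0$, and verify the Hom-vanishing $\Hom(A', E) = \Hom(E, B') = 0$ for every potentially destabilizing sub/quotient class. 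The numerical slack $\langle v, v_1 \rangle \geq 2$ or $\langle v_1^2 \rangle > 0$ is exactly what provides enough $\Ext^1$ classes to arrange this genericity --- this is the technical heart of \cite[Lem.~5.3.4, Thm.~5.3.5]{MYY:2018} and \cite[Thm.~4.2]{NY:2020}, and I would invoke them directly rather than redo the dimension count. Once such an $E$ is produced, deformation theory extends it to both adjacent chambers, contradicting totally semistability and forcing $v_1 \in {\cal I}_1$.
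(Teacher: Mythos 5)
The paper offers no argument for this proposition at all: it is quoted verbatim from \cite[Thm.~5.3.5]{MYY:2018} and \cite[Thm.~4.2]{NY:2020}, so any comparison is against those references rather than against text in this paper. Your treatment of the ``in particular'' clause is correct ($\langle v,v_1\rangle\in m{\Bbb Z}$ for $v=mv'$, so ${\cal I}_1=\emptyset$), and delegating the ``only if'' direction to the cited results is consistent with what the author does.

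However, your sketch of the ``if'' direction has a genuine gap at the Riemann--Roch step, and it is a sign problem, not a fixable technicality. For $v_1\in{\cal I}_1$ one has $\langle v_1,v\rangle=1>0$, hence $\chi(F,E)=-\langle v_1,v\rangle=-1<0$. Writing $\chi(F,E)=\hom(F,E)-\ext^1(F,E)+\ext^2(F,E)$ and using Serre duality $\ext^2(F,E)=\hom(E,F)$, a \emph{negative} Euler characteristic only forces $\Ext^1(F,E)\neq 0$; it gives no control whatsoever on $\Hom(F,E)$ or $\Hom(E,F)$. To extract a nonzero Hom from $\chi$ you would need $\langle v_1,v\rangle<0$, which never happens here. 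Worse, if $E$ were $\sigma_0$-stable of the same phase as the stable object $F$ of class $v_1\neq v$, then $\Hom(F,E)=\Hom(E,F)=0$ automatically, so the nonexistence of such an $E$ cannot be detected by a single Hom computation: the whole content of the ``if'' direction is to derive a contradiction from $\hom=\ext^2=0$, $\ext^1(F,E)=1$ for \emph{every} $F$ in the two-dimensional family $M(v_1)$. The actual mechanism (visible in Remark \ref{rem:codim0} of the paper and in \cite{MYY:2018}) is the Fourier--Mukai transform $\Psi$ attached to the fine moduli space $Y=M(v_1)$: these Ext-computations say exactly that $\Psi(E)[1]$ is a rank-one torsion-free sheaf of class $(1,0,-\ell)$ on $Y$, i.e.\ a twisted ideal sheaf $I_Z\otimes L$ with $|Z|=\ell>0$, and the points of $Z$ produce sub- or quotient objects of $E$ of class $v_1$ and equal phase, contradicting stability; comparing the two adjacent chambers then amounts to comparing $I_Z\otimes L$ with its derived dual, whence total semistability. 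Your phase-separation conclusion at the end of the paragraph is fine once a destabilizing factor of class $v_1$ is produced, but the step that is supposed to produce it does not work as written.
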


For a totally semi-stable wall $W_{v_1}$ defined by $v_1 \in {\cal I}_1$,
we have a decomposition
\begin{equation}\label{eq:tss-eq}
v=\ell v_1+v_2, \; \langle v_1^2 \rangle=\langle v_2^2 \rangle=0,\; 
\langle v_1,v_2 \rangle=1. 
\end{equation}

\begin{rem}\label{rem:codim0}
Let ${\cal C}_\pm$ be adjacent chambers of $W_{v_1}$ $(v_1 \in {\cal I}_1)$.
Then there are (contravariant) Fourier-Mukai transforms
$\Psi_\pm:{\bf D}(X) \to {\bf D}(Y)$ which induce isomorphisms
$$
\Psi_\pm:{\cal M}_{(\beta_\pm,\omega_\pm)}(v) \to  {\cal M}_{H_Y}(1,0,-\ell)
$$
where $Y=M_H(v_1)$, $(\beta_\pm,\omega_\pm) \in {\cal C}_\pm$ and $H_Y$ is an ample divisor on $Y$
(cf. \cite[Thm. 4.9]{MYY2}).
In particular 
$\Psi_+ \circ \Psi_-^{-1}$ induces an isomorphism
${\cal M}_{(\beta_-,\omega_-)}(v) \cong {\cal M}_{(\beta_+,\omega_+)}(v)$.
\end{rem}

Then Proposition \ref{prop:tot-wall} and Proposition \ref{prop:isom} imply that
the following corollary holds.
\begin{cor}\label{cor:non-primitive}
Assume that $v$ is not primitive.
Then $\Phi(E)[k]$ is a stable sheaf for a general $E \in {\cal M}_H(v)$, where $k=0$ or $1$ according as 
$a>0$ or $a \leq 0$.
\end{cor}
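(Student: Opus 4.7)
The plan is to connect the Gieseker chamber for $(v,H)$ on $X$ to a chamber whose image under $\Phi[k]$ is the Gieseker chamber on $\widehat{X}$ for the transformed Mukai vector, and then to exploit the absence of totally semi-stable walls to transfer stability.

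First, since $v$ is not primitive, Proposition \ref{prop:tot-wall} tells us that no wall in ${\cal H}$ is totally semi-stable for $v$. Hence for any two adjacent chambers ${\cal C}_\pm$ with $\sigma_\pm \in {\cal C}_\pm$, the intersection ${\cal M}_{\sigma_+}(v) \cap {\cal M}_{\sigma_-}(v)$ is non-empty, and Proposition \ref{prop:isom} upgrades this to a common dense open substack in the two moduli stacks. Iterating along a finite chain of adjacent chambers, the locus of objects stable in each chamber traversed is dense in every ${\cal M}_\sigma(v)$ encountered.

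Second, I would identify a ``dual Gieseker chamber'' on $X$ as follows. Set $k=0$ if $a>0$ and $k=1$ if $a\le 0$, so that $(-1)^k\Phi(v)=((-1)^k a,\mp\widehat{\xi},(-1)^k r)$ has non-negative rank. Fix an ample divisor $\widehat{H}$ on $\widehat{X}$; by the construction of the Gieseker chamber on $\widehat{X}$, there is a geometric stability condition $\widehat{\sigma}$ on $\widehat{X}$ for which $\widehat{\sigma}$-stable objects of class $(-1)^k\Phi(v)$ are exactly $\widehat{H}$-Gieseker stable sheaves. Using the isomorphism $\Phi\colon\Stab(X)\to\Stab(\widehat{X})$ of \eqref{eq:FM-action}, together with the shift $[k]$, one obtains a stability condition $\sigma^*\in\Stab(X)$ lying in some chamber ${\cal C}^*$ for $v$ with the property that $E\in{\cal M}_{\sigma^*}(v)$ if and only if $\Phi(E)[k]$ is $\widehat{H}$-Gieseker stable.

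Third, pick $\sigma_H$ in the Gieseker chamber for $(v,H)$, so ${\cal M}_{\sigma_H}(v)={\cal M}_H(v)$, and join $\sigma_H$ to $\sigma^*$ by a path in $\Stab(X)$ crossing only finitely many walls. By Step 1 none of these walls is totally semi-stable, so Step 1 gives a dense open substack of ${\cal M}_H(v)$ consisting of objects which are simultaneously $\sigma_H$- and $\sigma^*$-stable. A general member $E$ of ${\cal M}_H(v)$ therefore lies in ${\cal M}_{\sigma^*}(v)$, and by the choice of ${\cal C}^*$, $\Phi(E)[k]$ is $\widehat{H}$-Gieseker stable on $\widehat{X}$.

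The main obstacle is Step 2: one must verify that the pull-back through $\Phi[k]$ of a geometric Gieseker chamber on $\widehat{X}$ actually corresponds to a chamber of $\Stab(X)$ for $v$ in the sense of Section \ref{sect:Bridgeland}, and that the shifted transform correctly interchanges $\sigma^*$-stability with $\widehat{H}$-Gieseker stability. Once this identification is in place, the absence of totally semi-stable walls (Step 1) reduces the problem to a single wall-crossing argument via Proposition \ref{prop:isom}.
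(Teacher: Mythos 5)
Your argument is essentially the paper's own: the paper deduces the corollary directly from Proposition \ref{prop:tot-wall} (no totally semi-stable walls for non-primitive $v$, so a general object stays stable across every wall) together with Proposition \ref{prop:isom}, which is exactly the identification you flag as your ``main obstacle'' in Step 2 --- namely that for $t\ll 1$ the chamber $(0,tH)$ is carried by $\Phi[1]$ to the Gieseker chamber (up to dualizing when $a>0$) on $\widehat{X}$. The only quibble is that the density of the common stable locus across a non-totally-semi-stable wall comes from irreducibility of the moduli stacks rather than from Proposition \ref{prop:isom}, which you cite for it.
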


By \cite[Thm. 3.8, Prop. 5.14]{YY2}, we have the following result.
\begin{prop}\label{prop:isom}
\begin{enumerate}
\item[(1)]
$\Phi[1]$ induces an isomorphism
$$
{\cal M}_{(0,tH)}(r,\xi,a) \to {\cal M}_{(0,(nt)^{-1} \widehat{H})}(-a,\widehat{\xi},-r).
$$
\item[(2)]
Assume that $t \ll 1$.
\begin{enumerate}
\item
If $a \leq 0$, then
${\cal M}_{(0,(nt)^{-1} \widehat{H})}(-a,\widehat{\xi},-r)={\cal M}_{\widehat{H}}(-a,\widehat{\xi},-r)$.
\item
If $a>0$, then
${\cal M}_{(0,(nt)^{-1} \widehat{H})}(-a,\widehat{\xi},-r)$ 
consists of $E^{\vee}[1]$, $E \in {\cal M}_{\widehat{H}}(a,-\widehat{\xi},r)$.
\end{enumerate}
\end{enumerate}
\end{prop}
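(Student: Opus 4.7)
The plan is to deduce both parts by combining the induced action of $\Phi[1]$ on stability conditions with the standard description of moduli in the Gieseker chamber.

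For part (1), I would apply \eqref{eq:FM-action} to $\Phi[1]$: the central charge transforms as $Z_{\Phi[1](\sigma_{(0,tH)})} = -Z_{(0,tH)} \circ \Phi^{-1}$, and the slicing as ${\cal P}_{\Phi[1](\sigma_{(0,tH)})}(\phi) = \Phi({\cal P}_{(0,tH)}(\phi-1))$. Combined with the Mukai-vector rule $\Phi[1]:(r,\xi,a)\mapsto(-a,\widehat{\xi},-r)$ and Poincar\'e duality on $H^2$, a direct calculation rewrites this central charge, up to an element of $\widetilde{\GL}_2^+({\Bbb R})$ acting by \eqref{eq:C-action}, as $Z_{(0,s\widehat{H})}$ with $s=(nt)^{-1}$ and $n$ determined by the pairing between $H$ and $\widehat{H}$. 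To finish (1), I would check that $\Phi[1]$ sends the torsion pair defining ${\cal A}_{(0,tH)}$ to the torsion pair defining ${\cal A}_{(0,s\widehat{H})}$; by Lemma \ref{lem:semihom} the relevant subcategories are generated by semi-homogeneous and $\mu$-stable sheaves whose images under $\Phi$ (with a shift) are controlled, so the torsion-pair matching reduces to a slope comparison before and after the transform. Since the $\widetilde{\GL}_2^+({\Bbb R})$-action preserves (semi)stability, this gives the claimed isomorphism of moduli stacks.

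For part (2), the hypothesis $t\ll 1$ is equivalent to $s=(nt)^{-1}\gg 1$, placing $\sigma_{(0,s\widehat{H})}$ in the Gieseker chamber for the Mukai vector $(-a,\widehat{\xi},-r)$ (property (3) of Section \ref{sect:Bridgeland}, \cite[Prop.~14.2]{Br:3}). Hence $\sigma_{(0,s\widehat{H})}$-stability and $\widehat{H}$-Gieseker stability agree. When $a\le 0$ the rank $-a$ is non-negative, so the stable objects are genuine Gieseker stable sheaves and case (a) follows. When $a>0$ the rank is negative; stable objects in ${\cal A}_{(0,s\widehat{H})}$ of this class are then realized as shifts of derived duals of Gieseker stable sheaves on $\widehat{X}$. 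Explicitly, dualization and the shift $[1]$ identify them with $E^\vee[1]$ for $E\in{\cal M}_{\widehat{H}}(a,-\widehat{\xi},r)$, giving case (b).

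The main obstacle is the heart identification in part (1): one must transport a tilt of $\Coh(X)$ across the Fourier-Mukai equivalence and recognize it as the prescribed tilt of $\Coh(\widehat{X})$, while tracking the shift $[1]$ carefully (see Remark \ref{rem:moduli}) so that the isomorphism of moduli holds at the level of parametrized objects and not merely on equivalence classes modulo $[2k]$. The bookkeeping for the scalar $n$ and the $\widetilde{\GL}_2^+({\Bbb R})$-element needed to bring the transformed central charge into the standard form $Z_{(0,s\widehat{H})}$ is also delicate, as is the verification in (2)(b) that derived dualization sends the Gieseker chamber on one side to the corresponding chamber on the other.
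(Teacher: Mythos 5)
The paper does not actually prove this proposition; it is quoted directly from \cite[Thm.~3.8, Prop.~5.14]{YY2}, and your overall strategy (transport $\sigma_{(0,tH)}$ through \eqref{eq:FM-action}, normalize by a $\widetilde{\GL}_2^+({\Bbb R})$-element, then read off the Gieseker chamber in the large-volume limit for part (2)) is the same as in that reference. Your central-charge computation is sound: $\Phi$ sends $e^{\sqrt{-1}tH}$ to $-t^2n\,e^{\sqrt{-1}(nt)^{-1}\widehat{H}}$ because $(\widehat{H}^2)=(H^2)$, and the factor $-t^2n$ is absorbed by the ${\Bbb C}$-action together with the shift $[1]$.

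The genuine gap is in the step you yourself flag as the main obstacle, the identification of hearts. You propose to match the torsion pairs $({\cal T}_{(0,tH)},{\cal F}_{(0,tH)})$ and $({\cal T}_{(0,s\widehat{H})},{\cal F}_{(0,s\widehat{H})})$ generator by generator, invoking Lemma \ref{lem:semihom} to control the images under $\Phi$. But Lemma \ref{lem:semihom} applies only to semi-stable sheaves with \emph{isotropic} Mukai vector (i.e.\ semi-homogeneous sheaves), whereas the torsion pair is generated by all $\mu$-stable sheaves of the appropriate slope; for a generic such sheaf $\Phi(E)$ is an honest two-term complex, so $\Phi[1]$ does not carry ${\cal T}_{(0,tH)}$ into sheaves at all, and no slope comparison of the proposed kind can be run. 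The identification ${\cal P}_{\Phi[1](\sigma_{(0,tH)})}((0,1])={\cal A}_{(0,s\widehat{H})}$ holds only for the tilted hearts as a whole, and the standard way to obtain it (and the way it is obtained in \cite{YY2}) is indirect: one shows the transported stability condition is geometric by checking stability of the point-like objects --- here $\Phi[1]$ of a degree-zero line bundle is a shifted skyscraper and $\Phi[1]^{-1}(k_y)$ is a shifted degree-zero line bundle, both of which \emph{are} semi-homogeneous and hence covered by Lemma \ref{lem:semihom} --- and then invokes the fact that a geometric stability condition is determined by its central charge up to the $\widetilde{\GL}_2^+({\Bbb R})$-action. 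With that replacement the rest of your argument for (1) goes through. For (2)(b) the description of negative-rank stable objects in the large-volume limit as $E^{\vee}[1]$ is likewise asserted rather than derived; it is exactly the content of \cite[Prop.~5.14]{YY2} and needs either a citation or an argument dualizing the Harder--Narasimhan theory of ${\cal A}_{(0,s\widehat{H})}$.
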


\section{Preservation of stability.}\label{sect:Preserve}

\subsection{Wall crossing along a line.}\label{subsect:line}

We shall study totally semi-stable walls along the line
\begin{equation}\label{eq:line}
{\cal L}:=\{(0,tH) \mid t>0 \} \subset {\cal H}.
\end{equation}
We set 
$$
v=(r,\xi,a),\; \xi=dH+D, D \in H^\perp.
$$
Then 
\begin{equation}
Z_{(0,tH)}(v)=(rt^2n-a)+2ndt \sqrt{-1},\; n=\frac{(H^2)}{2}.
\end{equation}
Hence 
\begin{equation}\label{eq:mu}
-\frac{\mathrm{Re}Z_{(0,tH)}(v)}{\mathrm{Im}Z_{(0,tH)}(v)}=
\frac{a-rt^2 n}{2ndt}.
\end{equation}

Assume that $r>0$ and $(\xi \cdot H)>0$.
Let $W_u$ be a totally semi-stable wall defined by $u \in {\cal I}_1$.
Thus 
\begin{equation}\label{eq:u}
v=\ell_1 v_1+\ell_2 v_2, \; \langle v_1^2 \rangle=\langle v_2^2 \rangle=0,\; 
\langle v_1,v_2 \rangle=1,\; \{ \ell_1,\ell_2 \}=\{ \ell,1 \},\; u \in \{v_1,v_2 \}. 
\end{equation}
Assume that $(0,t_0 H) \in W_u$.
We take $(0,t_\pm H)$ from adjacent chambers. Thus
\begin{equation}
t_0-\epsilon<t_-<t_0 <t_+ <t_0+\epsilon, \;\;(0<\epsilon \ll 1).
\end{equation} 
Let us study ${\cal M}_{(0,t_\pm H)}(v)$.
For $E \in {\cal M}_{(0,t_+ H)}(v)$, we have an exact triangle
\begin{equation}\label{eq:HNF-}
E_1 \to E \to E_2 \to E_1[1]
\end{equation}
which is the Harder-Narasimhan filtration with respect to
$\sigma_{(0,t_- H)}$-semistability,
where
$E_1 \in {\cal M}_{(0,t_0 H)}(\ell_1 v_1)$ and
$E_2 \in {\cal M}_{(0,t_0 H)}(\ell_2 v_2)$.
We set 
$$
v_i=(r_i,\xi_i,a_i),\;\xi_i=d_i H+D_i,\; D_i \in H^\perp.
$$
Then $d_1,d_2>0$ and
$$
\frac{a_1 d-ad_1}{r_1 d-rd_1}=t^2 \frac{(H^2)}{2}.
$$

\begin{NB}
$$
\frac{r_1 d-rd_1}{a_1 d-ad_1}=(nt)^{-2} \frac{(\widehat{H}^2)}{2}.
$$
\end{NB}



\begin{lem}\label{lem:d_1}
For the exact triangle \eqref{eq:HNF-}, we have the following.
\begin{enumerate}
\item[(1)]
$rd_1-r_1 d<0$ and $ad_1-a_1 d<0$.
\item[(2)]
$r_1>0$. In particular $E_1$ is a semi-homogeneous vector bundle.
\end{enumerate}
\end{lem}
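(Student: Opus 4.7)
The plan is to establish (1) by a direct slope comparison along the line $\mathcal{L}$, and then deduce (2) by combining the sign information from (1) with the condition that $E_1$ lies in the heart $\mathcal{A}_{(0, t_0 H)}$. The main ingredients are the formula \eqref{eq:mu} for the $\sigma_{(0, tH)}$-slope together with the Harder-Narasimhan property of \eqref{eq:HNF-}.

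For (1), I would write out the slope difference
$$
-\frac{\mathrm{Re}\, Z_{(0,tH)}(v_1)}{\mathrm{Im}\, Z_{(0,tH)}(v_1)} + \frac{\mathrm{Re}\, Z_{(0,tH)}(v)}{\mathrm{Im}\, Z_{(0,tH)}(v)}
= \frac{(a_1 d - a d_1) - (r_1 d - r d_1) n t^2}{2 n d d_1 t}.
$$
On the wall, both slopes coincide at $t = t_0$, so the numerator vanishes there; this recovers the identity $a_1 d - a d_1 = (r_1 d - r d_1) n t_0^2$ displayed just above the lemma. The HN property \eqref{eq:HNF-} forces $E_1$ to have strictly larger $\sigma_{(0, t_- H)}$-slope than $E$, so the numerator is strictly positive at $t = t_-$. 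Viewed as a quadratic in $t$ with a simple zero at $t_0$, positivity on the left of $t_0$ forces its $t$-derivative at $t_0$, equal to $-2(r_1 d - r d_1) n t_0$, to be negative, giving $r_1 d - r d_1 > 0$, that is, $r d_1 - r_1 d < 0$. Substituting back gives $a_1 d - a d_1 > 0$, i.e. $a d_1 - a_1 d < 0$.

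For (2), since $E_1$ is a nonzero object of $\mathcal{A}_{(0, t_0 H)}$, $\mathrm{Im}\, Z_{(0, t_0 H)}(v(E_1)) = 2 n t_0 \ell_1 d_1 \geq 0$; since $\ell_1 \geq 1$ and $t_0 > 0$, this forces $d_1 \geq 0$. Combining with $r > 0$, $d > 0$, and the inequality $r_1 d > r d_1 \geq 0$ from (1), we conclude $r_1 > 0$. Now $v_1$ is isotropic with $r_1 > 0$ and $(\xi_1 \cdot H) = 2 n d_1 \geq 0$, so Lemma \ref{lem:nowall} applied to $\ell_1 v_1$ identifies $\mathcal{M}_{(0, t_0 H)}(\ell_1 v_1)$ with $\mathcal{M}_H(\ell_1 v_1)$ without shift; $E_1$ is therefore a Gieseker semi-stable sheaf with isotropic Mukai vector of positive rank, and by Remark \ref{rem:semi-hom} together with Mukai's classification of semi-homogeneous sheaves it is a semi-homogeneous vector bundle. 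The delicate step is extracting $d_1 \geq 0$ from the heart condition and ensuring that the strict slope inequality at $t_-$ propagates to the strict sign of $r_1 d - r d_1$; once that is in hand, the rest is bookkeeping in the central-charge formulas.
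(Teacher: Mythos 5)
Your proof is correct and takes essentially the same route as the paper's: the paper compares the $\sigma_{(0,tH)}$-slopes of $v$ and $v_1$ via the same numerator $(a_1d-ad_1)-(r_1d-rd_1)nt^2$ (using the strict inequality on the $t_+$ side rather than your $t_-$ side, which is equivalent), concludes $rd_1-r_1d<0$ and hence $ad_1-a_1d<0$ from the vanishing at $t_0$, and then deduces $r_1>0$ and the semi-homogeneity of $E_1$ exactly as you do. Your extra details (the simple-zero/derivative argument and the appeal to Lemma \ref{lem:nowall} and Remark \ref{rem:semi-hom}) just make explicit what the paper leaves implicit.
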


\begin{proof}
By \eqref{eq:mu}, we get   
$$
\frac{a-rt^2 n}{2ntd}-\frac{a_1-r_1 t^2 n}{2ntd_1}
=\frac{(ad_1-a_1 d)-(rd_1-r_1 d)t^2 n}{2nt dd_1}>0
$$
for $t>t_0$.
Hence $rd_1-r_1 d<0$, which implies $ad_1-a_1 d<0$.
In particular $r_1>0$ and hence $E_1$ is a semi-homogeneous vector bundle.
\end{proof}

We next study the opposite chamber.
\begin{prop}\label{prop:opp}
For $E \in {\cal M}_{(0,t_- H)}(v)$, we have an exact triangle 
\begin{equation}\label{eq:opp}
E_2 \to E \to E_1 \to E_2[1]
\end{equation}
which is the Harder-Narasimhan filtration with respect to $\sigma_{(0,t_+ H)}$-semistability, where
$E_1 \in {\cal M}_{(0,t_0 H)}(\ell_1 v_1)$ and
$E_2 \in {\cal M}_{(0,t_0 H)}(\ell_2 v_2)$.
Then we have the following
\begin{enumerate}
\item
$E \not \in \Coh(X)$ if and only if $r_2<0$.   
In this case, $H^{-1}(E)[1] \cong E_2$ and $H^0(E) \cong E_1$.
\item
$E \in \Coh(X)$ is not torsion free if and only if $r_2=0$.
In this case, $E_2$ is the torsion submodule of $E$.
\item
$E \in \Coh(X)$ is a locally free sheaf if and only if $r_2>0$.
In this case
\eqref{eq:opp} is the Harder-Narasimhan filitration of $E$
with respect to
$H$.
\end{enumerate}
\end{prop}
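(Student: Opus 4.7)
The plan is to derive \eqref{eq:opp} from the HN filtration on the $t_+$ side of the wall, mirroring Lemma~\ref{lem:d_1}, and then to let the sign of $r_2$ dictate the case.

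First I would note that the computation \eqref{eq:mu} used to prove Lemma~\ref{lem:d_1} gives, for $t>t_0$, the inequality $\phi_{(0,tH)}(v_1)<\phi_{(0,tH)}(v)$, and since $Z_{(0,tH)}(v)=\ell_1 Z_{(0,tH)}(v_1)+\ell_2 Z_{(0,tH)}(v_2)$ with positive $\ell_i$, it follows that $\phi_{(0,tH)}(v_2)>\phi_{(0,tH)}(v)>\phi_{(0,tH)}(v_1)$. Hence the HN filtration of $E\in\mathcal{M}_{(0,t_-H)}(v)$ with respect to $\sigma_{(0,t_+H)}$, taken in the tilted heart $\mathcal{A}_{(0,t_+H)}$, has larger-phase subobject $E_2$ of Mukai vector $\ell_2 v_2$ and smaller-phase quotient $E_1$ of Mukai vector $\ell_1 v_1$; both are $\sigma_{(0,t_0H)}$-semistable, yielding \eqref{eq:opp}.

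Next I would use that, on the wall, $Z_{(0,t_0H)}(v_1)$ and $Z_{(0,t_0H)}(v_2)$ are positive real multiples of $Z_{(0,t_0H)}(v)$, whose imaginary part $2nt_0 d$ is strictly positive. So $d_1,d_2>0$, and there is no Bridgeland wall for either isotropic class $\ell_i v_i$; Lemma~\ref{lem:nowall} then identifies $E_i$ as a Gieseker semistable sheaf when $r_i\geq 0$, and as the shift by $[1]$ of such a sheaf when $r_i<0$. Since $r_1>0$ by Lemma~\ref{lem:d_1}, $E_1$ is a semi-homogeneous vector bundle (Remark~\ref{rem:semi-hom}).

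The proof then splits on the sign of $r_2$. If $r_2>0$, then $E_2$ is also a semi-homogeneous vector bundle, \eqref{eq:opp} is a short exact sequence in $\Coh(X)$, hence $E$ is locally free, and the inequality $r_1 d_2-r_2 d_1>0$ (equivalent to $rd_1-r_1d<0$ from Lemma~\ref{lem:d_1}) gives $\mu_H(E_2)>\mu_H(E_1)$, making \eqref{eq:opp} the $\mu_H$-Harder-Narasimhan (hence Gieseker) filtration. If $r_2=0$, then $\xi_2\neq 0$ (since $d_2>0$), so by Lemma~\ref{lem:semihom}(3)(a) $E_2$ is a nonzero torsion sheaf and, $E_1$ being torsion-free, $E_2$ is forced to be the torsion submodule of $E$. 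If $r_2<0$, then $E_2=F[1]$ for a vector bundle $F$ with $v(F)=-\ell_2 v_2$, and the long exact cohomology sequence of \eqref{eq:opp} yields $H^{-1}(E)\cong F$ (i.e. $H^{-1}(E)[1]\cong E_2$) and $H^0(E)\cong E_1$, so $E\notin\Coh(X)$. The three cases on $r_2$ are mutually exclusive and exhaustive, and so are the three conclusions, so each equivalence follows at once.

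The main obstacle is keeping track of the two different $t$-structures, since the HN factors naturally live in the tilted heart $\mathcal{A}_{(0,t_0H)}$ and not a priori in $\Coh(X)$; Lemma~\ref{lem:nowall} is the bridge that reduces this to a sign check on $r_2$, after which the rest is bookkeeping.
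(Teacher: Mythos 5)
Your proof is correct and follows essentially the same route as the paper: use Lemma \ref{lem:d_1} to see that $E_1$ is a semi-homogeneous vector bundle, take the cohomology long exact sequence of the triangle \eqref{eq:opp}, and let the sign of $r_2$ (via Lemma \ref{lem:nowall}) determine whether $E_2$ is a shifted bundle, a torsion sheaf, or a bundle. Your added justifications of the phase ordering and of the slope inequality $\mu_H(E_2)>\mu_H(E_1)$ in case (iii) are consistent with what the paper leaves implicit.
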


\begin{proof}
By Lemma \ref{lem:d_1},  $E_1$ is a semi-homogeneous vector bundle.
Hence $H^{-1}(E_2) \cong H^{-1}(E)$ and 
we have an exact sequence
\begin{equation}
0 \to H^0(E_2) \to H^0(E) \to H^0(E_1) \to 0.
\end{equation}
Hence we have the following.
\begin{enumerate}
\item
$E \not \in \Coh(X)$ if and only if $H^0(E_2)=0$ and
$H^{-1}(E_2)$ is a semi-homogeneous vector bundle. 
In this case $H^{-1}(E) \cong H^{-1}(E_2)$ and $H^0(E) \cong H^0(E_1)$.  
\item
$E \in \Coh(X)$ is not torsion free if and only if $E_2$ is a torsion sheaf.
In this case $E_2$ is the torsion subsheaf of $E$.
\item
$E \in \Coh(X)$ is a locally free sheaf if and only if $E_2$ is a semi-homogeneous vector bundle.
\end{enumerate}
We note that $E_1$ and $E_2$ are Gieseker semi-stable locally free sheaves
with respect to $H$ for the case (iii).  Then Lemma \ref{lem:d_1} implies
\eqref{eq:opp} is the Harder-Narasimhan filitration of $E$
with respect to
$H$. Therefore our claims hold.
\end{proof}

\begin{lem}\label{lem:H_pm}
In the notation of \eqref{eq:u}, assume that $r_2>0$.
Then there are ample divisors $H_\pm$ such that
${\cal M}_{(0,t_\pm H)}(v)={\cal M}_{H_\pm}(v)$.
\end{lem}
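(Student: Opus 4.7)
The plan is to use the hypothesis $r_2>0$ together with Lemma \ref{lem:d_1} (which gives $r_1>0$) to reduce $\sigma_{(0,t_\pm H)}$-stability to $H_\pm$-Gieseker stability for a nearby polarization $H_\pm$.

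\textbf{Step 1 (all semistable objects are torsion-free sheaves).} With $r_1,r_2>0$ and $(\xi_i\cdot H)=2nd_i>0$, Lemma \ref{lem:nowall} applied to the isotropic vectors $v_i$ shows that every object in ${\cal M}_{(0,t_0H)}(\ell_iv_i)$ is an iterated extension of stable semi-homogeneous sheaves with Mukai vector $v_i$, hence a semi-homogeneous coherent sheaf. On the $t_-$ side, Proposition \ref{prop:opp}(iii) directly gives that every $E\in{\cal M}_{(0,t_-H)}(v)$ is locally free. On the $t_+$ side, applied to the triangle $E_1\to E\to E_2\to E_1[1]$, we get $E_1,E_2\in\Coh(X)$ with $E_1$ locally free, so the long exact cohomology sequence collapses to a short exact sequence $0\to E_1\to E\to E_2\to 0$ in $\Coh(X)$, and $E$ is torsion-free.

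\textbf{Step 2 (match with Gieseker stability for a perturbed polarization).} For a torsion-free subsheaf $F\subset E$ with $v(F)=(r',\xi',a')$, $\xi'=d'H+D'$, the inequality $\mu_{\sigma_{(0,tH)}}(F)\le\mu_{\sigma_{(0,tH)}}(E)$ reads
$$(a'd-ad')\le t^2 n\,(r'd-rd').$$
The destabilizing sub-Mukai-vectors to be tested are controlled by a locally finite family of types, each of which cuts out a Bridgeland wall near $(0,t_\pm H)$ in ${\cal H}$ and, in parallel, a $\mu$-wall in $\Amp(X)_{\Bbb R}$ near $H$. Since $(0,t_\pm H)$ lies in a Bridgeland chamber (with only $W_u$ adjacent along ${\cal L}$), I would pick $H_\pm:=H+s_\pm H_\pm'$ with $H_\pm'\in H^\perp_{\Bbb R}$ and $s_\pm>0$ small, chosen so that $H_\pm$ is ample and sits in the chamber of $\Amp(X)_v$ corresponding, through this wall-to-wall identification, to the Bridgeland chamber of $(0,t_\pm H)$. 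Combined with Step 1, this yields ${\cal M}_{(0,t_\pm H)}(v)={\cal M}_{H_\pm}(v)$.

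\textbf{Main obstacle.} The delicate point is Step 2: producing a single perturbed ample class $H_\pm$ for which the two stability notions match on \emph{all} torsion-free sheaves with Mukai vector $v$, not merely generically. This hinges on the precise correspondence between the locally finite families of Bridgeland walls (near $(0,t_\pm H)$) and $\mu$-walls (near $H$), on the hypothesis $r_1,r_2>0$ which rules out the ``torsion'' and ``two-term complex'' outcomes of Proposition \ref{prop:opp}(i)–(ii), and on the fact that $(0,t_\pm H)$ genuinely lies in the interior of a Bridgeland chamber so that only a discrete set of sub-Mukai-vector types can contribute destabilizers on either side.
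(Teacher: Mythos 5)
Your Step 1 is fine and is consistent with what the paper uses implicitly: with $r_1,r_2>0$ both sides of the wall parametrize locally free sheaves, and on the $t_-$ side Proposition \ref{prop:opp}(iii) even exhibits $0\to E_2\to E\to E_1\to 0$ as the Harder--Narasimhan filtration with respect to $\mu_H$. But Step 2, which you yourself flag as the main obstacle, is where the entire content of the lemma lies, and your construction of $H_\pm$ does not work. The relevant Gieseker wall in $\Amp(X)_{\Bbb R}$ is the hyperplane $(r_2\xi_1-r_1\xi_2)^\perp$, and the two facts the paper's proof turns on are: (a) $((r_2\xi_1-r_1\xi_2)^2)=-2r_1r_2\langle v_1,v_2\rangle=-2r_1r_2<0$, so by the Hodge index theorem this hyperplane actually meets the ample cone in some $H_1$; and (b) the sign of $((r_2\xi_1-r_1\xi_2)\cdot L)$ decides on which side of that wall the moduli space ${\cal M}_L(v)$ sits. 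Neither appears in your write-up.

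Worse, your recipe $H_\pm=H+s_\pm H'_\pm$ with $s_\pm$ small cannot produce $H_-$. By Lemma \ref{lem:d_1} one has $rd_1-r_1d=\ell_2(r_2d_1-r_1d_2)<0$, hence $((r_2\xi_1-r_1\xi_2)\cdot H)=2n(r_2d_1-r_1d_2)<0$ strictly; so $H$ lies at a definite positive distance from the wall $(r_2\xi_1-r_1\xi_2)^\perp$, and every sufficiently small perturbation of $H$ stays on the same side of it. On that side the objects of ${\cal M}_{(0,t_-H)}(v)$, whose $\mu$-Harder--Narasimhan filtration is $0\to E_2\to E\to E_1\to 0$ with $\mu(E_2)>\mu(E_1)$, remain Gieseker-unstable, so no such $H_-$ can satisfy ${\cal M}_{H_-}(v)={\cal M}_{(0,t_-H)}(v)$. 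The paper instead perturbs around a point $H_1$ lying \emph{on} the wall (which is in general far from $H$), choosing $H_\pm$ near $H_1$ with $((r_2\xi_1-r_1\xi_2)\cdot H_+)<0<((r_2\xi_1-r_1\xi_2)\cdot H_-)$. Your appeal to a ``wall-to-wall identification'' between Bridgeland walls near $(0,t_\pm H)$ and $\mu$-walls near $H$ is therefore not merely unjustified but false as stated: the relevant $\mu$-wall is not near $H$.
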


\begin{proof}
Since
$((r_2 \xi_1-r_1 \xi_2)^2)<0$, there is an ample divisor $H_1$ such that
$((r_2 \xi_1-r_1 \xi_2) \cdot H_1)=0$.
We take ample divisors $H_\pm$ from a small neighborhood of $H_1$  
such that 
$$
((r_2 \xi_1-r_1 \xi_2) \cdot H_+)<0<((r_2 \xi_1-r_1 \xi_2) \cdot H_-).
$$
Then ${\cal M}_{H_\pm}(v)={\cal M}_{(0,t_\pm H)}(v)$. 
\end{proof}

By Remark \ref{rem:codim0},
we have a contravariant equivalence
$\Lambda:
{\bf D}(X) \to {\bf D}(X)$
such that $\Lambda(E_1)=E_1$, $\Lambda(E_2)=E_2$
and $\Lambda$ induces an isomorphism
\begin{equation}
{\cal M}_{(0,t_+ H)}(v){ \cong \cal M}_{(0,t_- H)}(v).
\end{equation}
Proposition \ref{prop:opp} shows that the birational maps in \cite{Y:abel}
are the birational maps constructed by wall-crossing along the line 
${\cal L}$ in \eqref{eq:line}.

The following proposition will play an important role in the proof of
our main result (Theorem \ref{thm:stability}).

\begin{prop}\label{prop:t_0}
There are real numbers $t_1 \geq t_2 \geq 0$ such that
\begin{enumerate}
\item
if $t>t_1$, then 
$E$ is a torsion free sheaf for a general $E \in {\cal M}_{(0,tH)}(v)$ and
\item
if $t_1>t>t_2$, then
$E$ is a coherent sheaf with torsions for a general
$E \in {\cal M}_{(0,tH)}(v)$ and
\item
if $t<t_2$, then
 $H^{-1}(E) \ne 0$ for a general $E \in {\cal M}_{(0,tH)}(v)$.
\end{enumerate}
\end{prop}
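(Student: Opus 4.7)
The plan is to descend along ${\cal L}$ from $t = +\infty$ (the Gieseker chamber) and to use Proposition \ref{prop:opp} at each totally semistable wall to determine the nature of a general object. For $t$ sufficiently large, ${\cal M}_{(0, tH)}(v) = {\cal M}_H(v)$ consists of $\mu$-semistable torsion-free sheaves (since $r > 0$), so case (i) holds at the top. The walls on ${\cal L}$ are totally semistable (Proposition \ref{prop:tot-wall}) and locally finite, and Proposition \ref{prop:opp} describes every $E$ in the lower chamber of each wall as an extension $0 \to E_2 \to E \to E_1 \to 0$ in ${\cal A}_{(0, t_0 H)}$, with type---torsion-free, torsion-bearing, or complex with $H^{-1}(E) \neq 0$---determined by the sign of $r_2$.

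To analyze the upper chamber at the same wall, I would use the short exact sequence $0 \to E_1 \to E \to E_2 \to 0$, which holds for every element in the upper chamber, and the resulting cohomology long exact sequence
\[
0 \to H^{-1}(E) \to H^{-1}(E_2) \to E_1 \to H^0(E) \to H^0(E_2) \to 0.
\]
When $r_2 < 0$ one has $E_2 = F[1]$ for a semi-homogeneous vector bundle $F$, and the connecting map $\phi \in \Hom(F, E_1)$ varies with the extension class, using $\Ext^1_{{\cal A}}(E_2, E_1) = \Hom(F, E_1)$. The rank inequality $\rk F = -\ell_2 r_2 < \ell_1 r_1 = \rk E_1$, which follows from $r = \ell_1 r_1 + \ell_2 r_2 > 0$, forces a generic $\phi$ to be fibrewise injective on a dense open subset of $X$, so that $H^{-1}(E) = 0$ and $H^0(E) = \coker \phi$ for a general upper-chamber object. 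Thus a general upper-chamber element is always a sheaf: a standard degeneracy-locus count shows $H^0(E)$ is torsion-free when $r \geq 2$ (the rank-drop locus has codimension $r+1 \geq 3$) and carries torsion along a curve when $r = 1$. The cases $r_2 = 0$ (a generic extension is torsion-free) and $r_2 > 0$ (every extension is locally free) are handled analogously.

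The preceding analysis rules out retrograde wall transitions: the general upper-chamber element is never of type T3 (complex), and is of type T2 only in the special case $r_2 < 0$ with $r = 1$. Hence the only admissible transitions as $t$ decreases are T1 to T1, T1 to T2, T1 to T3, and T2 to T3. In particular, once the general object becomes a complex it remains so for all smaller $t$ (a T3 chamber cannot serve as the upper chamber of any further wall), and a T2 chamber admits only walls with $r_2 < 0$ below it. The claimed partition $(0, \infty) = (0, t_2) \cup \{t_2\} \cup (t_2, t_1) \cup \{t_1\} \cup (t_1, \infty)$ with $t_1 \geq t_2 \geq 0$ then follows, with the convention that an interval collapses whenever the corresponding transition does not occur.

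The main obstacle is the upper-chamber calculation: identifying $\Ext^1_{{\cal A}}(E_2, E_1)$ with $\Hom(F, E_1)$, verifying the rank inequality $\rk F < \rk E_1$, and carrying out the degeneracy-locus analysis for the cokernel of a generic map of semi-homogeneous vector bundles on $X$. The $r_2 = 0$ case likewise requires a careful moduli-dimension argument to show that a generic element of $\Ext^1(E_2, E_1)$ does not split over any non-zero subsheaf of the torsion quotient $E_2$.
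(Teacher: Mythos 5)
Your overall strategy (descend along ${\cal L}$ from the Gieseker chamber, read off the type of the lower chamber at each wall from the sign of $r_2$ via Proposition \ref{prop:opp}, and show the type can only degrade as $t$ decreases) is the right one and matches the paper's. But the central step --- determining the type of the \emph{upper} chamber from the wall data by a genericity argument on extension classes --- is wrong, and in fact contradicts the statement being proved. You claim that when $r_2<0$ a general upper-chamber object $E$, sitting in $E_1\to E\to E_2=F[1]$, has injective connecting map $\phi\colon F\to H^0(E_1)$ because $\rk F<\rk E_1$ and $\phi$ is generic, so that ``the general upper-chamber element is never of type T3.'' Now take a chamber ${\cal C}$ lying between two walls $W$ (above) and $W'$ (below) with $r_2<0$ at $W$. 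By Proposition \ref{prop:opp} applied at $W$, \emph{every} object of ${\cal M}_{(0,tH)}(v)$ with $(0,tH)\in{\cal C}$ has $H^{-1}\ne 0$; yet ${\cal C}$ is also the upper chamber of $W'$, and your argument would make its general object a sheaf. The point is that the extension classes realized by $\sigma_+$-semistable objects are constrained and need not sweep out a generic subset of $\Hom(F,E_1)$; moreover $\rk F<\rk E_1$ by itself never guarantees that a generic homomorphism between two \emph{given} semi-homogeneous bundles is injective. The same failure occurs for $r_2=0$: between $t_1$ and $t_2$ there can be several walls with $r_2=0$, and in each intermediate chamber every object has torsion, so ``a generic extension of a torsion sheaf by a bundle is torsion-free'' is false there. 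Accordingly your transition table, which omits T2$\to$T2 and T3$\to$T3, is incomplete; the proposition explicitly allows arbitrarily many walls inside each of the three regimes.

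The repair is to argue in the opposite direction, which is what the paper does. For a general $E$ in the chamber \emph{above} the wall, take its Harder--Narasimhan filtration $E_1\to E\to E_2$ with respect to the stability condition \emph{below} the wall; by Lemma \ref{lem:d_1} the subobject $E_1$ is a semi-homogeneous vector bundle. Then (a) the torsion subsheaf $T\subset E$ meets $E_1$ trivially and injects into $H^0(E_2)$, so if $T\ne 0$ then $E_2$ is a torsion sheaf or $H^{-1}(E_2)\ne0$, i.e.\ $r_2\le 0$; and (b) $H^{-1}(E)$ injects into $H^{-1}(E_2)$, so if $H^{-1}(E)\ne0$ then $r_2<0$. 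Combined with Proposition \ref{prop:opp}, which says the type of every object in the lower chamber is dictated by the sign of $r_2$, this yields exactly the one-way monotonicity needed for the partition $t_1\ge t_2\ge 0$; no control of the upper chamber in terms of the wall data is required, and, as the counterexample above shows, none is available.
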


\begin{proof}
Let $W_{u}$ be a totally semi-stable wall and $(0,\mu H) \in W_{u}$. 
Assume that a general $E \in {\cal M}_{\sigma_{(0,tH)}}(v)$ is a coherent sheaf but 
is not torsion free for $(\mu+\epsilon >t>\mu)$.
Then $E$ fits in an exact sequence
$$
0 \to T \to E \to E/T \to 0
$$
where $T$ is the torsion subsheaf of $E$.
Let   
$$
E_1 \to E \to E_2 \to E_1[1]
$$
be the exact triangle
which is the Harder-Narasimhan filtration of $E$ 
with respect to $\sigma_{(0,tH)}$-semistability $(t<\mu)$. 
Then $v(E_1)=\ell_1 v_1$ and $v(E_2)=\ell_2 v_2$ in the notation of \eqref{eq:u}.
By Lemma \ref{lem:d_1}, $E_1$ is a semi-homogeneous bundle.
Assume that $H^{-1}(E_2)=0$.
Then $H^0(E_2)$ is a semi-homogeneous sheaf and
we have an exact sequence
\begin{equation}\label{eq:T}
0 \to H^0(E_1) \to H^0(E) \to H^0(E_2) \to 0.
\end{equation}
Hence we have an injective homomorphism $T \to H^0(E_2)$, which
shows that $H^0(E_2)$ is a torsion sheaf. 
Hence all members $E \in {\cal M}_{\sigma_{(0,tH)}}(v)$ $(t<\mu)$
are 
coherent sheaves with torsions or $H^{-1}(E) \ne 0$.

Assume that a general $E \in {\cal M}_{\sigma_{(0,tH)}}(v)$
satisfies $H^{-1}(E) \ne 0$ 
for a chamber $(\mu+\epsilon>t>\mu)$.
Then $E$ 
fits in an exact triangle
$$
H^{-1}(E)[1] \to E \to H^0(E) \to H^{-1}(E)[2].
$$
We have an exact triangle   
$$
E_1 \to E \to E_2 \to E_1[1]
$$
which is the Harder-Narasimhan filtration of $E$ 
with respect to $\sigma_{(0,tH)}$-semistability $(t<\mu)$. 
Then $v(E_1)=\ell_1 v_1$ and $v(E_2)=\ell_2 v_2$ in the notation of \eqref{eq:u}.
By Lemma \ref{lem:d_1}, $E_1$ is a semi-homogeneous vector bundle.
Since $H^{-1}(E) \ne 0$, we see that $E_2[-1] \in \Coh(X)$ and   
we have an exact sequence
\begin{equation}\label{eq:E_2}
0 \to H^{-1}(E) \to H^{-1}(E_2) \to H^0(E_1) \to H^0(E) \to 0. 
\end{equation}
Hence all members $E \in {\cal M}_{(0,tH)}(v)$ $(t<\mu)$
satisfies $H^{-1}(E) \ne 0$.
Therefore there are real numbers $t_1 \geq t_2 \geq 0$
satisfying (i), (ii) and (iii).
\end{proof}

\begin{rem}
\begin{enumerate}
\item[(1)]
For the torsion subsheaf $T$ of $E$, 
we set $v(T)=\ell_1' v_1'$, $v(E/T)=\ell_2' v_2'$, 
where 
$\{\ell_1',\ell_2' \}=\{\ell,1\}$.
Since the extension \eqref{eq:T} does not split,
$T \to H^0(E)$ is not isomorphic.
Hence $\ell_1'=1$ and $\ell_1=\ell$.
\item[(2)]
For the exact sequence \eqref{eq:E_2}, we get
\begin{enumerate}
\item
$\rk H^{-1}(E_2) > \rk H^{-1}(E)$ or
\item
$\rk H^{-1}(E_2) = \rk H^{-1}(E)$ and 
$(c_1(H^{-1}(E_2)) \cdot H) > (c_1(H^{-1}(E)) \cdot H)$.
\end{enumerate}
\end{enumerate}
\end{rem}

\begin{lem}\label{lem:Gieseker}
Assume that $t_1+\epsilon>t>t_1>0$, where $\epsilon$ is sufficiently small positive number.
Then there is an ample divisor $L$ such that
${\cal M}_{(0,tH)}(v) \cap {\cal M}_{L}(v) \ne \emptyset$.
\end{lem}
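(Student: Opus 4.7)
The plan is to apply Lemma~\ref{lem:H_pm} at the first totally semi-stable wall along ${\cal L}$ above $t_{1}$ (after propagating through any intervening non-totally semi-stable walls), and to fall back on $L=H$ in case no such wall exists.

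Let $t^{*}>t_{1}$ be the smallest value for which $(0,t^{*}H)$ lies on a totally semi-stable wall along ${\cal L}$, with $t^{*}=+\infty$ if no such wall exists; every wall in the open interval $(t_{1},t^{*})$ is then by construction not totally semi-stable. First I would shrink $\epsilon$ so that $(t_{1},t_{1}+\epsilon)$ is contained in the chamber immediately above $t_{1}$, on which ${\cal M}_{(0,tH)}(v)$ is constant and its general member is torsion-free by Proposition~\ref{prop:t_0}~(i). At each non-totally semi-stable wall $s\in(t_{1},t^{*})$ the overlap ${\cal M}_{(0,(s-\delta)H)}(v)\cap{\cal M}_{(0,(s+\delta)H)}(v)$ is open dense in each side; by local finiteness only finitely many such walls arise between $t_{1}+\epsilon$ and the terminal chamber, so the finite intersection of these open dense subsets inside the irreducible stack ${\cal M}(v)$ produces a non-empty open subset of objects $E$ that are $\sigma$-stable throughout every chamber from the one containing $(t_{1},t_{1}+\epsilon)$ up to the chamber directly below $t^{*}$ (or the large-volume region if $t^{*}=+\infty$).

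If $t^{*}=+\infty$ such an $E$ is $\sigma$-stable in the Gieseker chamber for $H$, hence $E\in{\cal M}_{H}(v)$ and we take $L=H$. If $t^{*}<+\infty$, I would apply Proposition~\ref{prop:opp} at the totally semi-stable wall $t^{*}$ to $E$: torsion-freeness rules out cases~(i) ($r_{2}<0$) and~(ii) ($r_{2}=0$) of that proposition, forcing case~(iii) with $r_{2}>0$. Lemma~\ref{lem:H_pm} then furnishes an ample divisor $H_{-}$ with ${\cal M}_{(0,(t^{*}-\epsilon')H)}(v)={\cal M}_{H_{-}}(v)$ for small $\epsilon'$. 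Our $E$ lies in that chamber by construction, so $E\in{\cal M}_{H_{-}}(v)$; and since $E$ also lies in the chamber of $(t_{1},t_{1}+\epsilon)$, setting $L:=H_{-}$ yields $E\in{\cal M}_{(0,tH)}(v)\cap{\cal M}_{L}(v)$.

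The hard part will be the simultaneous-stability step: producing a single $E$ that is $\sigma$-stable across every non-totally semi-stable crossing between the chamber of $(t_{1},t_{1}+\epsilon)$ and the terminal chamber below $t^{*}$. This relies on irreducibility of ${\cal M}(v)$, openness of $\sigma$-stability, and the local finiteness of walls, so that the finite intersection of open dense overlaps stays non-empty.
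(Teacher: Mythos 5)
Your proof follows the paper's argument exactly: if there is no totally semi-stable wall on ${\cal L}$ above $t_1$, a general member survives to the Gieseker chamber and $L=H$ works, while otherwise torsion-freeness of a general member (Proposition \ref{prop:t_0} (i)) combined with the trichotomy of Proposition \ref{prop:opp} forces $r_2>0$ at the first totally semi-stable wall, so Lemma \ref{lem:H_pm} supplies $L=H_-$. The only cosmetic slip is that the chain of dense open overlaps should be intersected inside the irreducible Bridgeland moduli stack ${\cal M}_{(0,tH)}(v)$ of the chamber just above $t_1$, not inside ${\cal M}(v)$, which need not be irreducible.
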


\begin{proof}
If ${\cal M}_{(0,tH)}(v) \cap {\cal M}_{H}(v) \ne \emptyset$, then
obviously the claim holds.
So we assume that ${\cal M}_{(0,tH)}(v) \cap {\cal M}_{H}(v)= \emptyset$.
Then there is a number $t'$ such that there is no totally semi-stable wall containing $(0,tH)$
with $t_1<t<t'$ and $(0,t'H)$ is contained in a totally semi-stable wall $W$.
Then the claim follows from Lemma \ref{lem:H_pm}. 
\end{proof}

\begin{NB}
$$
0 \to E_1 \to E \to E_2 \to 0
$$
Assume that $\ell_1=\ell$.
Let $\Phi_{X \to X'}^{{\bf E}^{\vee}}:{\bf D}(X) \to {\bf D}(X')$ be 
a Fourier-Mukai transform such that
$\Phi_{X \to X'}^{{\bf E}^{\vee}}(E)[1]$ is a torsion free sheaf of rank 1
and 
$$
0 \to \Phi_{X \to X'}^{{\bf E}^{\vee}}(E)[1] \to \Phi_{X \to X'}^{{\bf E}^{\vee}}(E_2)[1]
\to \Phi_{X \to X'}^{{\bf E}^{\vee}}(E_1)[2] \to 0.
$$
Let $F$ be a subsheaf of $E$ with 
$\frac{(c_1(F) \cdot (H+\eta))}{\rk F}> \frac{(c_1(E) \cdot (H+\eta))}{\rk E}$.
We may assume that 
$\frac{(c_1(F) \cdot H)}{\rk F}= \frac{(c_1(E) \cdot H)}{\rk E}$
We note that $F \cap E_1$ and $E_1/F \cap E_1$ are $\mu$-semi-stable sheaves.
Hence $v(E_1 \cap F)=kv_1$. If $0<k \leq \ell$, then 
we see that
$\frac{(c_1(F) \cdot (H+\eta))}{\rk F}< \frac{(c_1(E) \cdot (H+\eta))}{\rk E}$.
Hence $F \subset E_1=0$. Then $F \to E_2$ is isomorphic, which is a contradiction.
\end{NB}

For the wall crossing along the line 
$$
{\cal L}':=\{(0,t' \widehat{H}) \mid t'>0 \},
$$
Proposition \ref{prop:t_0} is restated as follows.
 
\begin{prop}\label{prop:t_0'}
\begin{enumerate}
\item[(1)]
Assume that $a>0$. For the Mukai vector $v'=(a,\widehat{\xi},r)$,
there are real numbers $t_1' \geq t_2' \geq 0$ such that
\begin{enumerate}
\item
if $t'>t_1'$, then a general $F \in {\cal M}_{(0,t' \widehat{H})}(v')$
is a torsion free sheaf and
\item
if $t_1'>t'>t_2'$, then a general $F \in {\cal M}_{(0,t' \widehat{H})}(v')$
is a coherent sheaf with torsions and
\item
if $t'<t_2'$, then
 $H^{-1}(F) \ne 0$ for a general $F \in {\cal M}_{(0,t' \widehat{H})}(v')$.
\end{enumerate}
\item[(2)]
Assume that $a<0$. For the Mukai vector $v'=(-a,\widehat{\xi},-r)$,
there are real numbers $t_1' \geq t_2' \geq 0$ such that
\begin{enumerate}
\item
if $t'>t_1'$, then a general $F \in {\cal M}_{(0,t' \widehat{H})}(v')$ 
is a torsion free sheaf and
\item
if $t_1'>t'>t_2'$, then a general $F \in {\cal M}_{(0,t' \widehat{H})}(v')$
is a coherent sheaf with torsions and
\item
if $t'<t_2'$, then
 $H^{-1}(F) \ne 0$ for a general $F \in {\cal M}_{(0,t' \widehat{H})}(v')$.
\end{enumerate}
\end{enumerate}
\end{prop}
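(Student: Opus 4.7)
The plan is to apply Proposition~\ref{prop:t_0} directly on the dual abelian surface $\widehat{X}$, with the Mukai vector $v'$ in place of $v$ and the ample divisor $\widehat{H}$ in place of $H$. No new geometric input is required: the whole machinery of subsection~\ref{subsect:line} (the line ${\cal L}$, the Harder--Narasimhan triangle \eqref{eq:HNF-}, Lemma~\ref{lem:d_1}, and Proposition~\ref{prop:opp}) is symmetric in the abelian surface under consideration, so once the running hypotheses of that subsection are verified on $\widehat{X}$, Proposition~\ref{prop:t_0} on $\widehat{X}$ yields exactly the trichotomy being claimed for ${\cal M}_{(0,t'\widehat{H})}(v')$.

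The only data to check are that $v'$ has positive first coordinate and positive slope with respect to $\widehat{H}$. In case~(1), $v'=(a,\widehat{\xi},r)$ has rank $a>0$ by hypothesis; in case~(2), $v'=(-a,\widehat{\xi},-r)$ has rank $-a>0$ since $a<0$. In both cases $(\widehat{\xi}\cdot\widehat{H})>0$ follows from $(\xi\cdot H)>0$ under the normalization of $\widehat{\xi}$ and $\widehat{H}$ fixed at the start of the paper. Thus the hypotheses of Proposition~\ref{prop:t_0} are satisfied on $\widehat{X}$ for $v'$.

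Applying Proposition~\ref{prop:t_0} on $\widehat{X}$ along the line ${\cal L}'=\{(0,t'\widehat{H})\mid t'>0\}$ then produces real numbers $t_1'\geq t_2'\geq 0$ such that a general $F\in{\cal M}_{(0,t'\widehat{H})}(v')$ is (i) a torsion free sheaf when $t'>t_1'$, (ii) a coherent sheaf with nonzero torsion when $t_1'>t'>t_2'$, and (iii) satisfies $H^{-1}(F)\neq 0$ when $t'<t_2'$. This is precisely the content of Proposition~\ref{prop:t_0'} in both cases~(1) and (2).

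There is essentially no obstacle; the argument is a formal transport of Proposition~\ref{prop:t_0} across the Fourier--Mukai duality. The only delicate point is purely notational --- matching the sign and scaling conventions for $\widehat{\xi}$ and $\widehat{H}$ so that the positivity $(\widehat{\xi}\cdot\widehat{H})>0$ really does follow from $(\xi\cdot H)>0$ --- but this is bookkeeping rather than a genuine difficulty.
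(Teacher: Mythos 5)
Your proposal is correct and coincides with the paper's own treatment: the paper offers no separate argument, introducing the statement with ``For the wall crossing along the line ${\cal L}'$, Proposition \ref{prop:t_0} is restated as follows,'' i.e.\ it is exactly the application of Proposition \ref{prop:t_0} on $\widehat{X}$ to $v'$ and $\widehat{H}$. Your verification of the running hypotheses (positive rank of $v'$ in both cases, and $(\widehat{\xi}\cdot\widehat{H})=(\xi\cdot H)>0$ since the cohomological Fourier--Mukai map is an isometry sending $(0,\xi,0)\mapsto(0,-\widehat{\xi},0)$) is precisely the bookkeeping needed.
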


\begin{rem}
Assume that $a=0$.
For the Mukai vector $v'=(0,\widehat{\xi},r)$,
there are real numbers $t_2' \geq 0$ such that
\begin{enumerate}
\item
if $t'>t_2'$, then a general $F \in {\cal M}_{(0,t' \widehat{H})}(v')$
is a purely 1-dimensional sheaf and
\item
if $t'<t_2'$, then
 $H^{-1}(F) \ne 0$ for a general $F \in {\cal M}_{(0,t' \widehat{H})}(v')$.
\end{enumerate}
\end{rem}

\subsection{Proof of Theorem \ref{thm:main}.}\label{subsect:proof}

Let $E_1$ and $E_2$ be semi-stable objects in \eqref{eq:HNF-} with Mukai vectors
\begin{equation}
v(E_i)=v_i=(r_i,\xi_i,a_i),\;(i=1,2).
\end{equation}
We first study the Fourier-Mukai transforms $\Phi(E_1),\Phi(E_2)$ of $E_1,E_2$
by using Lemma \ref{lem:semihom}.

\begin{lem}\label{lem:a>0}
Assume that $a >0$.
Then $a_1>0$,
$\Phi(E_1) \in
{\cal M}_{\widehat{H}}(\ell_1(a_1,-\widehat{\xi_1},r_1))$
and the following claims hold.
\begin{enumerate}
\item[(1)]
Assume that $r_2<0$.
Then $a_2>0$ and $\Phi(E_2) \in {\cal M}_{\widehat{H}}(\ell_2(a_2,-\widehat{\xi_2},r_2))$.
\item[(2)]
Assume that $r_2=0$.
Then one of the following holds.
\begin{enumerate}
\item
$a_2>0$ and $\Phi(E_2) \in {\cal M}_{\widehat{H}}(\ell_2(a_2,-\widehat{\xi_2},r_2))$.
\item
$a_2=0$, 
$(\xi_2^2)=0$, $(\xi_1 \cdot \xi_2)=1$
and $\Phi(E_2)[1] \in {\cal M}_{\widehat{H}}(\ell_2(0,\widehat{\xi_2},0))$.
In particular $X$ is a product of elliptic curves and $\xi$ is primitive.
\begin{NB}
$(\xi \cdot \xi_i)=\ell_{1-i}$.
Hence $\xi$ is primitive. 
\end{NB}
\end{enumerate}
\item[(3)]
Assume that $r_2>0$. Then the following claims hold.
\begin{enumerate}
\item
If $a_2 > 0$, then
$\Phi(E_2) \in {\cal M}_{\widehat{H}}(\ell_2(a_2,-\widehat{\xi_2},r_2))$.
\item
If $a_2 \leq 0$, then 
$\Phi(E_2)[1] \in {\cal M}_{\widehat{H}}(\ell_2(-a_2,\widehat{\xi_2},-r_2))$. 
\end{enumerate}
\end{enumerate}
\end{lem}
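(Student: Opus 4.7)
My plan is to treat $E_1$ and $E_2$ separately, extracting the signs of Mukai coordinates from Lemma \ref{lem:d_1}, computing the Fourier-Mukai image of each via Lemma \ref{lem:semihom}, and then lifting the resulting semi-homogeneous sheaves into Gieseker moduli by Lemma \ref{lem:nowall}. For $E_1$: the inequality $ad_1 < a_1 d$ of Lemma \ref{lem:d_1}(1), together with $a > 0$ and $d, d_1 > 0$, forces $a_1 > a d_1/d > 0$. Combined with $r_1 > 0$ from Lemma \ref{lem:d_1}(2), the isotropy gives $(\xi_1^2) = 2r_1 a_1 > 0$, and $(\xi_1 \cdot H) = d_1 (H^2) > 0$, so Lemma \ref{lem:semihom}(1)(a) yields $\Phi(E_1) \in \Coh(\widehat{X})$. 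Since $E_1$, and hence $\Phi(E_1)$, is semi-homogeneous with isotropic Mukai vector, Lemma \ref{lem:nowall} then places $\Phi(E_1) \in \mathcal{M}_{\widehat{H}}(\ell_1(a_1, -\widehat{\xi_1}, r_1))$.

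For $E_2$ I would case-analyze on $r_2$ using Proposition \ref{prop:opp}, and then apply Lemma \ref{lem:semihom} to $E_2$ (or its appropriate shift) in each case. When $r_2 > 0$ (case (3)), $E_2$ is itself a semi-homogeneous vector bundle, and the sign of $(\xi_2^2) = 2r_2 a_2$ selects Lemma \ref{lem:semihom}(1)(a) for $a_2 > 0$ or (2)/(3)(b) for $a_2 \leq 0$, matching (3)(a) and (3)(b). When $r_2 = 0$ (case (2)), $E_2$ is a semi-homogeneous torsion sheaf with $\xi_2$ effective of self-intersection $0$, and Lemma \ref{lem:semihom}(3)(a) produces sub-cases $a_2 > 0$ (giving (2)(a)) and $a_2 = 0$ (giving (2)(b)); in sub-case (b), the identity $\langle v_1, v_2 \rangle = 1$ reduces to $(\xi_1 \cdot \xi_2) = 1$, which combined with $\xi_2$ primitive isotropic yields an elliptic fibration with a section, hence a product decomposition of $X$ and primitivity of $\xi = \xi_1 + \xi_2$ (from $(\xi \cdot \xi_2) = 1$). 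The sub-case $a_2 < 0$ is excluded by the shift convention on the heart $\mathcal{A}_{(0, t_0 H)}$ together with the requirement that $E_2$ lie in the same slicing as $E_1$.

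The main obstacle is case (1), $r_2 < 0$, where I must establish $a_2 > 0$. Here $E_2 = F[1]$ for a semi-homogeneous sheaf $F$ of positive rank $-\ell_2 r_2$ and negative slope $d_2(H^2)/r_2 < 0$. My strategy is a contradiction argument: suppose $a_2 \leq 0$; then $(\xi_2^2) = 2r_2 a_2 \geq 0$, so Lemma \ref{lem:semihom}(1)(b) (when $a_2 < 0$) or (3)(b) (when $a_2 = 0$) applied to $F$ gives $\Phi(F)[2] \in \Coh(\widehat{X})$, hence $\Phi(E_2) = \Phi(F)[1] \in \Coh(\widehat{X})[-1]$. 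To force a contradiction I would track $\Phi(E_2)$ through Proposition \ref{prop:isom}: the isomorphism $\Phi[1] : \mathcal{M}_{(0, t_0 H)}(\ell_2 v_2) \to \mathcal{M}_{(0, (nt_0)^{-1}\widehat{H})}(\ell_2(-a_2, \widehat{\xi_2}, -r_2))$, combined with the small-$t$ realization of the target as either a Gieseker moduli or a dual-shift moduli on $\widehat{X}$ (Proposition \ref{prop:isom}(2)), is incompatible with the cohomology sequence arising from $\Phi(E_1) \to \Phi(E) \to \Phi(E_2) \to \Phi(E_1)[1]$ when $\Phi(E_2)$ is shifted into degree $-1$. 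Hence $a_2 > 0$, and Lemma \ref{lem:semihom}(2) then gives $\Phi(E_2) \in \Coh(\widehat{X})$ with Mukai vector $\ell_2(a_2, -\widehat{\xi_2}, r_2)$, which Lemma \ref{lem:nowall} places in $\mathcal{M}_{\widehat{H}}$. The delicate shift-tracking through Proposition \ref{prop:isom} is where I expect the real work to lie.
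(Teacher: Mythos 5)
Your treatment of $E_1$ and of case (3) matches the paper: $a_1>d_1a/d>0$ from Lemma \ref{lem:d_1}, then $(\xi_1^2)=2r_1a_1>0$ with $(\xi_1\cdot H)>0$ makes $\xi_1$ ample and Lemma \ref{lem:semihom} applies. But the real content of the lemma is the sign determination of $a_2$ in cases (1) and (2), and there your proposal has a genuine gap. The paper settles both cases by a short lattice computation: since $d_1,d_2>0$, whenever $(\xi_2^2)\geq 0$ the class $\xi_2$ is ample or effective, so $(\xi_1\cdot\xi_2)>0$ (Hodge index, as $\xi_1$ is ample); combined with $r_1>0$, $a_1>0$ this makes every term of $\langle v_1,v_2\rangle=(\xi_1\cdot\xi_2)-r_1a_2-r_2a_1$ a nonnegative integer, and if $r_2<0$ and $a_2\leq 0$ the sum is at least $2$ or $3$, contradicting $\langle v_1,v_2\rangle=1$. (The degenerate case $a_2=\xi_2=0$, i.e.\ $v_2=(-1,0,0)$, is excluded separately via $a_1d-d_1a=0$; your sketch does not address $\xi_2=0$ at all.)

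Your substitutes for this step do not work. In case (1) you propose to derive a contradiction from $\Phi(E_2)$ landing in cohomological degree $1$ while $\Phi(E_1)$ sits in degree $0$, via ``shift-tracking through Proposition \ref{prop:isom}''; but the triangle $\Phi(E_1)\to\Phi(E)\to\Phi(E_2)$ with these placements merely says $H^0(\Phi(E))=\Phi(E_1)$ and $H^1(\Phi(E))=\Phi(E_2)[1]$, which is not in itself contradictory --- nothing at this stage constrains $\Phi(E)$ to be a sheaf, so no contradiction materializes without further input, and you supply none. In case (2) you exclude $a_2<0$ ``by the shift convention on the heart''; this is false as stated: when $r_2=0$ the object $E_2$ is a torsion sheaf and lies in the heart ${\cal A}_{(0,tH)}$ for every value of $a_2$, so the heart imposes no condition on $a_2$. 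The exclusion again comes only from $\langle v_1,v_2\rangle=(\xi_1\cdot\xi_2)-r_1a_2=1$ together with $(\xi_1\cdot\xi_2)\geq 1$ and $-r_1a_2>0$. Until you replace these two steps by the intersection-number argument (or an equivalent one), the lemma is not proved.
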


\begin{proof}
By Lemma \ref{lem:d_1}, $r_1>0$ and $a_1 d-d_1 a>0$.
Hence we get $a_1>d_1 a/d \geq 0$.
Then $(\xi_1^2)=2r_1 a_1>0$ implies $\xi_1$ is ample.
Hence we have $\Phi(E_1) \in {\cal M}_{\widehat{H}}(\ell_1(a_1,-\widehat{\xi_1},r_1))$.

(1) Assume that $r_2<0$.
(i) If $a_2<0$, then $(\xi_2^2)=2r_2 a_2>0$ implies $\xi_2$ is ample.
Hence 
$$
\langle v_1,v_2 \rangle=(\xi_1 \cdot \xi_2)-r_1 a_2-r_2 a_1 \geq 3,
$$
which is a contradiction.
(ii) If $a_2=0$ and $\xi_2 \ne 0$, then $\xi_2$ is effective and
$$
\langle v_1,v_2 \rangle=(\xi_1 \cdot \xi_2)-r_2 a_1 \geq 2,
$$
 which is a contradiction.
\begin{NB}
$(\xi_2 \cdot H)=d_2(H^2)>0$.
\end{NB}
If $a_2=\xi_2=0$, then $v_2=(-1,0,0)$.
In this case, $(d,a)=(\ell_1 d_1,\ell_1 a_1)$ implies that
$a_1 d-d_1 a=0$.
Therefore this case does not occur too.
(iii) If $a_2>0$, then $(\xi_2^2)=2r_2 a_2<0$ and
$\Phi(E_2) \in {\cal M}_{\widehat{H}}(\ell_2(a_2,-\widehat{\xi_2},r_2))$.

(2)
We assume that $r_2=0$. Then $\xi_2$ is an effective divisor with $(\xi_2^2)=0$.
If $a_2 < 0$, then 
we have $(\xi_1 \cdot \xi_2) \geq 0$ and $-r_1 a_2 \geq 0$.
If $(\xi_1 \cdot \xi_2)=0$, then $r_1 a_1=(\xi_1^2)/2=0$, which is a contradiction.
Hence $(\xi_1 \cdot \xi_2)>0$.
Since 
$$
1=\langle v_1,v_2 \rangle=(\xi_1 \cdot \xi_2)-r_1 a_2,
$$
we see that
 $(\xi_1 \cdot \xi_2 )=1$ and $r_1 a_2=0$,
which is a contradiction.
Therefore $a_2 \geq 0$.
If $a_2=0$, then 
$X$ is a product of elliptic curves
and $\Phi(E_2)[1] \in {\cal M}_{\widehat{H}}(\ell_2(0,\widehat{\xi_2},0))$.

(3)
Assume that $r_2 > 0$.
If $a_2 \geq 0$, then $(\xi_2^2) =2r_2 a_2 \geq 0$.
Since $(\xi_2 \cdot H)>0$, 
$a_2>0$ and $\Phi(E_2) \in {\cal M}_{\widehat{H}}(\ell_2(a_2,-\widehat{\xi_2},r_2))$ or
$a_2=0$ and
$\Phi(E_2)[1] \in {\cal M}_{\widehat{H}}(\ell_2(0,\widehat{\xi_2},-r_2))$ is a torsion sheaf.
If $a_2<0$, then
$(\xi_2^2)=2r_2 a_2 < 0$.
Thus $\Phi(E_2)[1] \in {\cal M}_{\widehat{H}}(\ell_2(-a_2,\widehat{\xi_2},-r_2))$. 
\end{proof}

\begin{rem}
If $a_2=0$, then $\Phi(E)$ is a two-term complex of locally free sheaves and
$\Phi^1(E)$ is a torsion sheaf.
Hence $\Phi(E)^{\vee}$ is a coherent sheaf with a torsion.
 \end{rem}

\begin{NB}
If $X$ does not contain an elliptic curve, then 
$a_2 \ne 0$.
\end{NB}

\begin{lem}\label{lem:a<0}
Assume that $a \leq 0$.
Then
$a_2<0$,
$\Phi(E_2)[1] \in {\cal M}_{\widehat{H}}(\ell_2(-a_2,\widehat{\xi_2},-r_2))$
and the following claims hold.
\begin{enumerate}
\item[(1)]
Assume that $r_2<0$.
Then $a_1<0$ and $\Phi(E_1)[1] \in 
{\cal M}_{\widehat{H}}(\ell_1(-a_1,\widehat{\xi_1},-r_1))$.
\item[(2)]
Assume that $r_2=0$. Then one of the following holds.
\begin{enumerate}
\item
If $a_1=0$, then there is an elliptic curve $C$ and
$v_1=(1,k_1 C,0)$, $v_2=(0,k_2 C,-1)$, where $k_1,k_2 \in {\Bbb Z}_{>0}$.
\item
If $a_1<0$, then $\Phi(E_1)[1] \in {\cal M}_{\widehat{H}}(\ell_1(-a_1,\widehat{\xi_1},-r_1))$.
\end{enumerate}
\item[(3)]
\begin{NB}
This case does not occur for the proof of Theorem \ref{thm:stability} (2).
\end{NB}
Assume that $r_2>0$.
\begin{enumerate}
\item
 If $a_1 \leq 0$, then $\Phi(E_1)[1] \in {\cal M}_{\widehat{H}}(\ell_1(-a_1,\widehat{\xi_1},-r_1))$.
\item
If $a_1>0$, then $\Phi(E_1) \in  {\cal M}_{\widehat{H}}(\ell_1(a_1,-\widehat{\xi_1},r_1))$.
\end{enumerate}
\end{enumerate}
\end{lem}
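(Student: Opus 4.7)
The plan is to dualize the proof of Lemma \ref{lem:a>0}, with $E_2$ taking the role that $E_1$ played there. First I would establish the main claim $a_2 < 0$: the decomposition $v = \ell_1 v_1 + \ell_2 v_2$ gives $ad_2 - a_2 d = \ell_1(a_1 d_2 - a_2 d_1)$, and Lemma \ref{lem:d_1} yields $a_2 d_1 < a_1 d_2$, hence $ad_2 > a_2 d$; dividing by $d > 0$ and using $a \leq 0$ forces $a_2 < 0$. With this in hand, to prove $\Phi(E_2)[1] \in {\cal M}_{\widehat{H}}(\ell_2(-a_2,\widehat{\xi_2},-r_2))$ I would split by the sign of $r_2$: Lemma \ref{lem:semihom}(2) handles $r_2 > 0$ (since $(\xi_2^2)<0$); Lemma \ref{lem:semihom}(3)(a) handles $r_2 = 0$ (since $\xi_2$ is effective and $a_2 < 0$); for $r_2 < 0$ one writes $E_2 = E_2'[1]$ with $E_2'$ a sheaf of Mukai vector $\ell_2(-v_2)$ and applies Lemma \ref{lem:semihom}(1)(b) to $E_2'$ (noting that $((-\xi_2) \cdot H) < 0$ and $((-\xi_2)^2) > 0$), absorbing the resulting shift.

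Next, I would handle the case analysis for $E_1$ via the Hodge index theorem on $\NS(X)$, which has signature $(1, \rho - 1)$. In case (1), assuming $r_2 < 0$: if $a_1 > 0$, both $\xi_1$ and $\xi_2$ lie in the positive cone, so Hodge index gives $(\xi_1 \cdot \xi_2) \geq 2\sqrt{r_1 a_1 r_2 a_2} \geq 2$, while $\langle v_1, v_2 \rangle = 1$ rewrites as $(\xi_1 \cdot \xi_2) = 1 + r_1 a_2 + r_2 a_1 \leq -1$, a contradiction; if $a_1 = 0$, then $\xi_1$ is a nonzero nef class of square zero and Hodge index still gives $(\xi_1 \cdot \xi_2) \geq 1$, whereas the pairing rewrites as $(\xi_1 \cdot \xi_2) = 1 + r_1 a_2 \leq 0$. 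Hence $a_1 < 0$, and Lemma \ref{lem:semihom}(2) finishes the Mukai vector computation. Case (3) is immediate from Lemma \ref{lem:semihom}: parts (2) or (3)(b) cover $a_1 \leq 0$, and part (1)(a) covers $a_1 > 0$.

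The main obstacle is case (2)(a). With $r_2 = 0$ and $a_1 = 0$, both $v_1$ and $v_2$ are isotropic; the pairing $\langle v_1, v_2 \rangle = 1$ becomes $(\xi_1 \cdot \xi_2) - r_1 a_2 = 1$, and Hodge index together with $\xi_1$ nef and $\xi_2$ effective forces $(\xi_1 \cdot \xi_2) = 0$ along with $r_1 = 1$ and $a_2 = -1$. Because $\NS(X)$ has signature $(1,\rho-1)$, any totally isotropic subspace is at most one-dimensional, so $\xi_1$ and $\xi_2$ must be proportional to a common primitive isotropic class. The delicate point is identifying this class with an actual elliptic curve $C$: this uses the standard fact that a primitive nef isotropic divisor on an abelian surface is the fiber class of an elliptic fibration. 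Writing $\xi_1 = k_1 C$ and $\xi_2 = k_2 C$ with $k_1, k_2 \in {\Bbb Z}_{>0}$ then yields the stated $v_1 = (1, k_1 C, 0)$ and $v_2 = (0, k_2 C, -1)$. Within case (2), the subcase $a_1 > 0$ is excluded by a shorter version of the case-(1) Hodge-index argument, and the subcase $a_1 < 0$ falls directly to Lemma \ref{lem:semihom}(2).
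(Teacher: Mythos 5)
Your proposal is correct and follows essentially the same route as the paper: deriving $a_2<0$ from Lemma \ref{lem:d_1} together with the linear relation coming from $v=\ell_1v_1+\ell_2v_2$, then excluding the impossible sign combinations for $(r_2,a_1)$ via $\langle v_1,v_2\rangle=1$ and positivity of intersections of ample/effective classes, and finishing with Lemma \ref{lem:semihom}. The only cosmetic difference is that you invoke the Hodge index inequality where the paper simply uses $(\xi_1\cdot\xi_2)\geq 1$ for ample (resp.\ effective) classes, and you spell out the application of Lemma \ref{lem:semihom} to $E_2$ case by case where the paper is terse.
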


\begin{proof}
We note that $E_1$ is a semi-homogeneous bundle (Lemma \ref{lem:d_1} (2)). 
Since 
$$
\ell_1(rd_1-r_1 d)+\ell_2(rd_2-r_2 d)=0,
$$
Lemma \ref{lem:d_1} implies
$rd_2-r_2 d>0$ and $ad_2-a_2 d>0$.
Hence $a_2<0$ and we see that
$\Phi(E_2)[1] \in {\cal M}_{\widehat{H}}(\ell_2(-a_2,\widehat{\xi_2},-r_2))$.

(1) Assume that $r_2<0$.
Then $(\xi_2^2)=2r_2 a_2>0$ implies $\xi_2$ is ample.
(i) If $a_1>0$, then $\xi_1$ is ample.
Since 
$$
\langle v_1,v_2 \rangle=(\xi_1 \cdot \xi_2)-r_1 a_2-r_2 a_1 \geq 3,
$$
this case does not occur.
(ii) If $a_1=0$, then $\xi_1$ is effective and
$$
\langle v_1,v_2 \rangle=(\xi_1 \cdot \xi_2)-r_1 a_2 \geq 2.
$$
Hence this case does not occur too.
(iii) If $a_1<0$, then $\Phi(E_1)[1] \in {\cal M}_{\widehat{H}}(\ell_1(-a_1,\widehat{\xi_1},-r_1))$.

(2) Assume that $r_2=0$.
Then $\xi_2$ is effective with $(\xi_2^2)=0$.
(i) If $a_1>0$, then $\xi_1$ is ample, which implies
$$
\langle v_1,v_2 \rangle=(\xi_1 \cdot \xi_2)-r_1 a_2 \geq 2.
$$
Hence this case does not occur.
(ii) If $a_1=0$, then $\xi_1$ is effective.
Since 
$$
1=\langle v_1,v_2 \rangle=(\xi_1 \cdot \xi_2)-r_1 a_2 \geq -r_1 a_2>0,
$$
$(\xi_1 \cdot \xi_2)=0$ and
$v_1=(1,\xi_1,0)$, $v_2=(0,\xi_2,-1)$.
Since $\xi_i$ $(i=1,2)$ are effective divisor with $(\xi_i^2)=0$, 
there is an elliptic curve $C$ and
$\xi_i=k_i C$ $(k_i \in {\Bbb Z}_{>0})$.
(iii) If $a_1<0$, then $\Phi(E_1)[1] \in {\cal M}_{\widehat{H}}(\ell_1(-a_1,\widehat{\xi_1},-r_1))$.

(3) Assume that $r_2>0$.
(i) If $a_1<0$, then $\Phi(E_1)[1] \in {\cal M}_{\widehat{H}}(\ell_1(-a_1,\widehat{\xi_1},-r_1))$.
(ii) If $a_1= 0$, then $\Phi(E_1)[1]$ is a torsion sheaf.
(iii) If $a_1>0$, then $\Phi(E_1) \in {\cal M}_{\widehat{H}}(\ell_1(a_1,-\widehat{\xi_1},r_1))$.
\end{proof}

\begin{thm}\label{thm:stability}
Let $v=(r,\xi,a)$ be a Mukai vector such that $r > 0$ and $(\xi \cdot H)>0$, where 
$H$ is an ample divisor on $X$.
\begin{enumerate}
\item[(1)]
Assume that $a>0$. 
If $X$ is not a product of elliptic curves or $\xi$ is not primitive,
then there are ample divisors $L \in \NS(X)$ and $L' \in \NS(\widehat{X})$ such that 
$\Phi(E)^{\vee}$ is stable with respect to $L'$ for a general
$E \in {\cal M}_L(v)$.
\item[(2)]
Assume that $a \leq 0$ and
$v \ne (\ell,kC,-1), (1,kC,-\ell)$, where 
$C$ is an elliptic curve and $k \geq \ell+1$. 
Then there are ample divisors $L \in \NS(X)$ and $L' \in \NS(\widehat{X})$ such that 
$\Phi(E)[1]$ is stable with respect to $L'$ for a general
$E \in {\cal M}_L(v)$.
\end{enumerate}
\end{thm}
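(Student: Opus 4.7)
The plan is to use wall-crossing along the ray $\mathcal L = \{(0,tH) : t>0\}$ to locate a specific Bridgeland chamber in which a general $\sigma_{(0,tH)}$-stable object $E$ of class $v$ is simultaneously a torsion-free coherent sheaf on $X$ (hence, by Lemma \ref{lem:H_pm}, $E \in \mathcal M_L(v)$ for some ample $L \in \NS(X)$) and has its Fourier-Mukai transform $\Phi(E)^{\vee}$ (case (1)) or $\Phi(E)[1]$ (case (2)) also a torsion-free coherent sheaf on $\widehat X$; the dual analog of Lemma \ref{lem:H_pm} applied on $\widehat X$ then produces an ample $L' \in \NS(\widehat X)$ realizing Gieseker stability of the transform. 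I would first invoke Corollary \ref{cor:non-primitive} to dispose of the non-primitive case and then assume $v$ primitive.

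The decisive wall is the threshold $t = t_1$ supplied by Proposition \ref{prop:t_0}: above $t_1$ a general $E$ is torsion-free, while below $t_1$ it acquires torsion. Just below this wall, $E$ fits in the Harder-Narasimhan triangle $E_1 \to E \to E_2$ with $v(E_i) = \ell_i v_i$ satisfying \eqref{eq:u}, and Proposition \ref{prop:opp} shows that the transition from torsion-free to torsion-having occurs precisely when $r_2$ drops from positive to zero. Thus the classifying wall at $t = t_1$ carries a decomposition with $r_2 = 0$, and Lemma \ref{lem:a>0}(2) (for case (1)) or Lemma \ref{lem:a<0}(2) (for case (2)) applies to pin down $a_2$ (resp.\ $a_1$) and the precise cohomological position of $\Phi(E_1), \Phi(E_2)$.

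For case (1) with $a>0$, Lemma \ref{lem:a>0}(2) presents two alternatives: either $a_2 > 0$, in which case $\Phi(E_1), \Phi(E_2)$ are both coherent sheaves on $\widehat X$, so $\Phi(E)$ is a sheaf and $\Phi(E)^{\vee}$ inherits torsion-freeness; or $a_2 = 0$, which by the same Lemma forces $(\xi_2^2)=0, (\xi_1 \cdot \xi_2)=1$, hence $X$ being a product of elliptic curves and $\xi$ primitive, which is exactly the excluded hypothesis. For case (2) with $a \leq 0$, Lemma \ref{lem:a<0}(2) gives parallel alternatives: either $\Phi(E_1)[1], \Phi(E_2)[1]$ are both sheaves and $\Phi(E)[1]$ is a sheaf, or $a_1 = 0$ with the forced configuration $v_1 = (1,k_1 C,0), v_2 = (0,k_2 C,-1)$ for an elliptic curve $C$, which gives $v = (\ell, kC, -1)$ or $(1,kC,-\ell)$, again the excluded hypothesis. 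Outside the excluded cases, one uses Lemma \ref{lem:H_pm} on both sides, combined with the identification of Proposition \ref{prop:isom}, to convert the chamber data into ample divisors $L$ and $L'$ with the requisite Gieseker stability properties.

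The main obstacle I anticipate is controlling the infinitely many totally semi-stable walls predicted by Proposition \ref{prop:tot-wall}: one must verify that the wall at $t_1$ with $r_2 = 0$ is available (or move to a nearby compatible chamber), and that on the dual side the corresponding image chamber is also just above the analogous threshold $t_1'$ on $\mathcal L' = \{(0, t'\widehat H) : t'>0\}$ so that Proposition \ref{prop:t_0'} yields torsion-freeness of the transform there. The coordination is accomplished by the correspondence $t \leftrightarrow (nt)^{-1}$ of Proposition \ref{prop:isom}, together with the fact that the shift parities recorded in Lemmas \ref{lem:a>0} and \ref{lem:a<0} are matched by the dual wall analysis; once this synchronization is verified case-by-case, the theorem follows by choosing $(\beta,\omega)$ inside the overlap chamber and extracting $L, L'$ from Lemma \ref{lem:H_pm} on both abelian surfaces.
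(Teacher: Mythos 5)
Your overall strategy is the one the paper follows: cross walls along ${\cal L}=\{(0,tH)\mid t>0\}$, locate the threshold $t_1$ of Proposition \ref{prop:t_0}, analyse the Harder--Narasimhan triangle $E_1\to E\to E_2$ of a general object across that wall via Lemmas \ref{lem:a>0} and \ref{lem:a<0}, and convert the relevant Bridgeland chambers into Gieseker chambers on $X$ and on $\widehat X$ using Lemma \ref{lem:Gieseker} (and its analogue on $\widehat X$) together with Proposition \ref{prop:isom}. Your identification of the excluded hypotheses with the degenerate alternatives $a_2=0$ (resp.\ $a_1=0$) in part (2) of those lemmas is exactly the paper's argument.

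The gap is the assertion that ``the classifying wall at $t=t_1$ carries a decomposition with $r_2=0$.'' That holds only when $t_1>t_2$. Proposition \ref{prop:t_0} allows $t_1=t_2$, in which case the general object jumps at $t_1$ directly from a torsion-free sheaf to a two-term complex with $H^{-1}\neq 0$; by Proposition \ref{prop:opp} the wall at $t_1$ then has $r_2<0$, and your appeal to Lemma \ref{lem:a>0}(2), resp.\ Lemma \ref{lem:a<0}(2), does not apply. The paper treats this as a separate sub-case using parts (1) of those lemmas: for $r_2<0$ one gets $a_2>0$ (resp.\ $a_1<0$), so $\Phi(E_1)$ and $\Phi(E_2)$ (resp.\ their shifts by $[1]$) are again semi-homogeneous bundles and, notably, no exclusion hypothesis is needed there. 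The paper also treats $t_1=t_2=0$ separately, where there is no relevant wall at all and Proposition \ref{prop:isom} gives stability of the transform with respect to $\widehat H$ directly; your proposal does not address this case either. All of the missing sub-cases are handled by the same toolkit you already invoke, so the argument is completable, but as written the case analysis is incomplete and the quoted step fails whenever $t_1=t_2$.
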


\begin{proof}
\begin{NB}
We may assume that $H$ is a general ample divisor with respect to $v$.
\end{NB}
For the family of stability conditions
$\sigma_{(0,tH)}$, let $t_1 \geq t_2$ be non-negative numbers in Proposition \ref{prop:t_0}.
If $t_1+\epsilon> t>t_1>0$, then Lemma \ref{lem:Gieseker} implies
${\cal M}_{(0,tH)}(v) \cap {\cal M}_L(v) \ne \emptyset$ for an ample divisor $L$.

(1)
Assume that $t_1>t_2$ and take a member $E \in {\cal M}_{(0,tH)}(v)$ ($t_1+\epsilon> t>t_1$).
Let   
\begin{equation}\label{eq:HNFt1}
E_1 \to E \to E_2 \to E_1[1]
\end{equation}
be the exact triangle
which is the Harder-Narasimhan filtration of $E$ 
with respect to $\sigma_{(0,tH)}$-semistability $(t_1-\epsilon<t<t_1)$. 
Then $v(E_1)=\ell_1 v_1$ and $v(E_2)=\ell_2 v_2$ in the notation of \eqref{eq:u}.
By the proof of Proposition \ref{prop:t_0},
$E_2$ is a torsion semi-homogeneous sheaf.
By Lemma \ref{lem:a>0}, $\Phi(E_1)$ is a semi-homogeneous bundle.
Since $X$ is not a product of elliptic curves or $\xi$ is not primitive,
$\Phi(E_2)$
is also a semi-homogeneous bundle (Lemma \ref{lem:a>0} (2)).
Hence we have an exact sequence
$$
0 \to \Phi(E_2)^{\vee} \to \Phi(E)^{\vee} \to \Phi(E_1)^{\vee} \to 0.
$$
By Lemma \ref{lem:Gieseker}, 
$\Phi(E)^{\vee} \in {\cal M}_{L'}(a,\widehat{\xi},r)$ for an ample divisor $L'$ 
on $\widehat{X}$.

Assume that $t_1=t_2>0$
and take a member $E \in {\cal M}_{(0,tH)}(v)$ ($t_1+\epsilon> t>t_1$).
Let   
\begin{equation}\label{eq:HNFt0}
E_1 \to E \to E_2 \to E_1[1]
\end{equation}
be the exact triangle
which is the Harder-Narasimhan filtration of $E$ 
with respect to $\sigma_{(0,tH)}$-semistability $(t_2-\epsilon <t<t_2)$. 
Then $v(E_1)=\ell_1 v_1$ and $v(E_2)=\ell_2 v_2$ in the notation of \eqref{eq:u}.
By the proof of Proposition \ref{prop:t_0},
$E_2[-1]$ is a semi-homogeneous vector bundle.
By Lemma \ref{lem:a>0},
$\Phi(E_1) $ and $\Phi(E_2)$
are semi-homogeneous vector bundles
and we have an exact sequence
$$
0 \to \Phi(E_2)^{\vee} \to \Phi(E)^{\vee} \to \Phi(E_1)^{\vee} \to 0.
$$
By Lemma \ref{lem:Gieseker}, 
$\Phi(E)^{\vee} \in {\cal M}_{L'}(a,\widehat{\xi},r)$ for an ample divisor $L'$
on $\widehat{X}$.

Assume that $t_1=t_2=0$.
We take a general $E \in {\cal M}_H(v)$ such that  
$E \in  {\cal M}_{(0,tH)}(v)$ ($\epsilon> t>0$).
Then  $\Phi(E)^{\vee}$ is a stable sheaf with respect to $\widehat{H}$
by Proposition \ref{prop:isom}.

(2)
Assume that $t_1>t_2$ and take a member $E \in {\cal M}_{(0,tH)}(v)$ ($t_1+\epsilon> t>t_1$)
fitting in an exact triangle
$$
E_1 \to E \to E_2 \to E_1[1]
$$
as in \eqref{eq:HNFt1}.
Since $E_2$ is a torsion semi-homogeneous sheaf,
by using Lemma \ref{lem:a<0},
we see that $\Phi(E_1)[1]$ and $\Phi(E_2)[1]$
are semi-homogeneous bundles
fitting in an exact sequence
$$
0 \to \Phi(E_1)[1] \to \Phi(E)[1] \to \Phi(E_2)[1] \to 0.
$$
By Lemma \ref{lem:Gieseker}, 
$\Phi(E)[1] \in {\cal M}_{L'}(-a,\widehat{\xi},-r)$ for an ample divisor $L'$
on $\widehat{X}$.

Assume that $t_1=t_2>0$
and take a member $E \in {\cal M}_{(0,tH)}(v)$ ($t_1+\epsilon> t>t_1$)
fitting in an exact triangle
$$
E_1 \to E \to E_2 \to E_1[1]
$$
as in \eqref{eq:HNFt0}.
Since $r_2<0$, by using Lemma \ref{lem:a<0},
we see that $\Phi(E_1)[1] $ and $\Phi(E_2)[1]$
are semi-homogeneous bundles
and we have an exact sequence
$$
0 \to \Phi(E_1)[1] \to \Phi(E)[1] \to \Phi(E_2)[1] \to 0.
$$
By Lemma \ref{lem:Gieseker}, 
$\Phi(E)[1] \in {\cal M}_{L'}(-a,\widehat{\xi},-r)$ for an ample divisor $L'$
on $\widehat{X}$.

Assume that $t_1=t_2=0$. We take a general $E \in {\cal M}_H(v)$ such that  
$E \in  {\cal M}_{(0,tH)}(v)$ ($\epsilon> t>0$).
Then  $\Phi(E)[1]$ is a stable sheaf with respect to $\widehat{H}$
by Proposition \ref{prop:isom}.
\end{proof}

\begin{NB}
Relation of parameter:
For $v'=(-a,\widehat{\xi},-r)$, we take $t_1' \geq t_2' \geq 0$. Then
$\frac{1}{t_2}>\frac{1}{t_1}>t_1' \geq t_2' \geq 0$.
In particular if there is no totally semistable wall in $\Amp(X)_{\Bbb R}$, then
$t_1'=t_2'=0$. 
\end{NB}

\begin{rem}
We can rewrite Theorem \ref{thm:stability} in terms of the non-negative numbers $t_1' \geq t_2'$ 
in Proposition \ref{prop:t_0'}.
We note that $\frac{1}{n t_2'} \geq \frac{1}{n t_1'} \geq 0$, where $\frac{1}{n t_2'}=\infty$ if $t_2'=0$.
By Proposition \ref{prop:isom},
we have an isomorphism 
${\cal M}_{(0,\frac{1}{nt'}H)}(r,\xi,a) \cong {\cal M}_{(0,t' \widehat{H})}(-a,\widehat{\xi},-r)$
by $\Phi[1]$.
Then the statements of Theorem \ref{thm:stability} imply that
$\frac{1}{nt_1'}>t_1$.
\end{rem}

\begin{rem}\label{rem:exc}
We shall treat the exceptional cases in Theorem \ref{thm:stability}.
\begin{enumerate}
\item[(1)]
Assume that $X$ is a product of elliptic curves.
Let $C_1,C_2$ be elliptic curves in $X$ such that $(C_1 \cdot C_2)=1$.
\begin{enumerate}
\item
Assume that $v=(\ell r_1,\ell C_1+(\ell r_1 a_1+1)C_2,\ell a_1)$.
Then $u=(0,C_2,0)$ defines a totally semi-stable wall $W_u$.
By the irreducibility of $C_2$ and the proof of Proposition \ref{prop:t_0}, we see that $(0,t_1 H) \in W_u$.
In this case, we get $t_1=\frac{1}{nt_1'}$ and 
$\Phi(E)^{\vee}$ is not torsion free for any $E \in {\cal M}_H(v)$.
\item
Assume that $v=(r_1,C_1+(r_1 a_1 +\ell)C_2,a_1)$.
Similarly $u=(0,C_2,0)$ defines a totally semi-stable wall $W_u$ with $(0,t_1 H) \in W_u$.
Hence we get $t_1=\frac{1}{nt_1'}$ and 
$\Phi(E)^{\vee}$ is not torsion free for any $E \in {\cal M}_H(v)$.
\end{enumerate}
\begin{NB}
$\frac{a_2 d_1-a_1 d_2}{r_2 d_1-r_1 d_2}=\frac{a_1}{r_1}$.
Hence $t_1^2 n=a_1/r_1$ and a general $E$ is stable with respect to
an ample divisor $L$. 
\end{NB}
\item[(2)]
Assum  that $X$ contains an elliptic curve $C$. 
\begin{enumerate}
\item
Assume that $v=(\ell,k C,-1)$ $(k \geq \ell+1)$.
We take positive integers $k_1, k_2$ such that
$k=\ell k_1+k_2$, $k_2 \leq \ell$. 
For $u_i=(0,(k_2+i\ell)C,-1)$ with $0 \leq i<k$,
we have a decomposition
$$
v=\ell (1,(k_1-i)C,0)+u_i, 
$$
which implies we have a totally semi-stable wall $W_{u_i}$.
Then we see that $(0,t_1 H) \in W_{u_0}$ and $t_1=\frac{1}{nt_1'}$.
\begin{NB}
Since $t^2 n=-\frac{k_2+i \ell -k}{r(k_2+i \ell)}$, $t^2 n \geq \frac{k_2-k}{rk_2}$.
\end{NB} 
Since $\rk E=\ell$, ${\cal M}(v)$ is irreducible,
Hence $\Phi(E)[1]$ is a coherent sheaf with torsions for all
$E \in {\cal M}_H(v)$.
\item
Assume that
$v=(1,k C,-\ell)$ $(k \geq \ell+1)$.
Then $u=(0,C,-1)$ defines a totally semi-stable wall $W_u$
containing $(0,t_1 H)$. 
In this case $\Phi(E)[1]$ is a coherent sheaf with torsions for all $E \in {\cal M}_H(v)$.
\begin{NB}
In generat, we have a decomposition
$v=\ell v_1+\ell_2 v_2$, $v_1=(1,k_1 C,0), v_2=(0,k_2 C,-1)$.
If $k_2>\ell_1$, then $E$ has a torsion.
\end{NB}
\end{enumerate}
\begin{NB}
$\frac{a_2 d_1-a_1 d_2}{r_2 d_1-r_1 d_2}=\frac{k_1}{k_2}$.
Hence $t_1^2 n=k_1/k_2$ and a general $E$ is stable with respect to
an ample divisor $L$. 
\end{NB}
\end{enumerate}
\end{rem}

\begin{cor}\label{cor:stability}
Let $v=(r,\xi,a)$ be a primitive Mukai vector such that $r>0$ and $(\xi \cdot H)>0$.
\begin{enumerate}
\item[(1)]
Assume that $a>0$ and $\langle v^2 \rangle \geq 2r, 2a$.
If $X$ is not a product of elliptic curves or $\xi$ is not primitive, then
$\Phi(E)^{\vee}$ is stable with respect to $\widehat{H}$ for a general
$E \in {\cal M}_H(v)$.
\item[(2)]
Assume that $(\xi^2)>0$ and $a< 0$.
Then $\Phi(E)[1]$ is stable with respect to $\widehat{H}$ for a general
$E \in {\cal M}_H(v)$.
\end{enumerate}
\end{cor}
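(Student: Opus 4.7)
The plan is to deduce the corollary from Theorem \ref{thm:stability} by invoking Proposition \ref{prop:chamber} (2) to replace the ample divisors $L$ and $L'$ produced by that theorem with the originally given $H$ and $\widehat{H}$. Set $v':=(a,\widehat{\xi},r)$ in case (1) and $v':=(-a,\widehat{\xi},-r)$ in case (2); in both cases $\langle v'^2\rangle=\langle v^2\rangle$, and $v'$ is the Mukai vector of the Fourier-Mukai image of a sheaf with Mukai vector $v$ (using $\Phi(E)^{\vee}$ for case (1) and $\Phi(E)[1]$ for case (2)).

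First I would check that Theorem \ref{thm:stability} applies. In case (1) the hypotheses are exactly those of Theorem \ref{thm:stability} (1). In case (2), the assumption $(\xi^2)>0$ excludes the exceptional Mukai vectors $(\ell,kC,-1)$ and $(1,kC,-\ell)$, because then $\xi=kC$ is a multiple of an elliptic curve and hence $(\xi^2)=0$; so Theorem \ref{thm:stability} (2) applies. In either case we obtain ample divisors $L\in\NS(X)$ and $L'\in\NS(\widehat{X})$ such that the Fourier-Mukai image of a general $E\in{\cal M}_L(v)$ is $L'$-stable of Mukai vector $v'$.

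Next I would use Proposition \ref{prop:chamber} (2) to conclude that both ${\cal M}(v)$ and ${\cal M}(v')$ are irreducible. In case (1), this follows from the hypotheses $\langle v^2\rangle\geq 2r,2a$ directly. In case (2), $a\leq -1$ and $(\xi^2)>0$ yield $\langle v^2\rangle=(\xi^2)-2ra\geq 2r$, and since $r\geq 1$, also $\langle v^2\rangle\geq -2a$; as $\langle v'^2\rangle=\langle v^2\rangle$, both inequalities give irreducibility. Now ${\cal M}_L(v)$ and ${\cal M}_H(v)$ are open substacks of the irreducible stack ${\cal M}(v)$, so their intersection is open and dense; the further open locus in ${\cal M}_L(v)$ where the Fourier-Mukai transform lands in ${\cal M}_{L'}(v')$ is non-empty by Theorem \ref{thm:stability}, hence dense. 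Therefore a general $E\in{\cal M}_H(v)$ lies in ${\cal M}_L(v)$ and has Fourier-Mukai image in ${\cal M}_{L'}(v')$.

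It remains to upgrade $L'$ to $\widehat{H}$. Inside the irreducible stack ${\cal M}(v')$, the open substacks ${\cal M}_{L'}(v')$ and ${\cal M}_{\widehat{H}}(v')$ have dense intersection. Since $\Phi$ is an equivalence of derived categories, the assignment $E\mapsto\Phi(E)^{\vee}$ (resp.\ $E\mapsto\Phi(E)[1]$) is a birational equivalence between the dense open substacks of ${\cal M}(v)$ and ${\cal M}(v')$ on which it is represented by sheaves; hence the preimage of ${\cal M}_{\widehat{H}}(v')$ is open and dense in ${\cal M}_H(v)$, and combining with the previous paragraph yields the corollary. The one step that requires care is this last transfer: one must argue that the Fourier-Mukai image of a dense open substack of ${\cal M}_H(v)$ meets the open substack ${\cal M}_{\widehat{H}}(v')\subset{\cal M}(v')$, which is where the irreducibility of ${\cal M}(v')$ together with the equivalence property of $\Phi$ is essential.
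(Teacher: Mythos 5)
Your proposal is correct and takes essentially the same route as the paper: apply Theorem \ref{thm:stability} and then use Proposition \ref{prop:chamber} (2) on both $v$ and the transformed vector $v'$ (checking $\langle v^2\rangle \geq 2r$ and, in case (2), $\langle v^2\rangle=(\xi^2)-2ra>-2ra\geq -2a$) to replace $L$ and $L'$ by $H$ and $\widehat{H}$ via irreducibility of ${\cal M}(v)$ and ${\cal M}(v')$. The paper's proof is simply a terser version of this same argument.
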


\begin{proof}
(1)
By using Proposition \ref{prop:chamber} and Theorem \ref{thm:stability},
the claim follows.
(2)
We note that 
$$
\langle v^2 \rangle=(\xi^2)-2ra > -2ra.
$$
By using Proposition \ref{prop:chamber} and Theorem \ref{thm:stability},
the claim follows.
\end{proof}

\begin{prop}\label{prop:wBN-L}
Let $v=(r,\xi,a)$ be a Mukai vector such that $r>0$, $(\xi \cdot H)>0$ and $\langle v^2 \rangle \geq 0$.
Then there is an ample divisor $L$ such that the weak Brill-Noether property holds for
${\cal M}_L(v)$.  
\end{prop}

\begin{proof}
By Theorem \ref{thm:stability} and Remark \ref{rem:exc},
there is an ample divisor $L$ such that 
\begin{enumerate}
\item
if $a>0$, then $\Phi(E)^{\vee}$ is a sheaf for a general $E \in {\cal M}_L(v)$, and
\item
if $a \leq 0$, then $\Phi(E)[1]$ is a sheaf for a general $E \in {\cal M}_L(v)$.
\end{enumerate}
Therefore the weak Brill-Noether property holds.
\end{proof}

\begin{rem}[{cf. \cite[Prop. 3.6]{CNY2}}]
\begin{enumerate}
\item
Assume that $v \ne e^\eta (r,0,-1)$ $(\eta \in \NS(X))$.
Then there is a $\mu$-stable locally free sheaf $E$ with $v(E)=v$ (\cite[Prop. 3.5]{KY}).
By the Grothendieck-Serre duality, we have
$\Phi_{X \to \widehat{X}}^{{\cal P}^{\vee}}(E)=\Phi_{X \to \widehat{X}}^{{\cal P}}(E^{\vee})^{\vee}[2]$.
Hence if $(\xi \cdot H)<0$, then
we can apply Theorem \ref{thm:stability} to $E^{\vee}$.
\item
Assume that $v = e^\eta (r,0,-1)$ $(\eta \in \NS(X))$.
Then ${\cal M}_H(v)$ consists of non-locally free sheaves for any $H$.
If $(\eta^2)<0$, then 
there is an ample divisor $L' \in \NS(\widehat{X})$ such that
$\Phi(E)[1]$ is stable with respect to $L'$ for a general $E \in {\cal M}_H(v)$.
If $(\eta^2) \geq 0$ and $(\eta \cdot H) \leq 0$, then
$\Phi^1(E) \ne 0$ and $\Phi^2(E) \ne 0$.
\begin{NB}
If $(\eta \cdot H)=0$, then $\eta=0$. In this case $\Phi(E)^{\vee}[-1]$ is a stable sheaf 
with the Mukai vector $(1,0,-r)$. 
\end{NB}
\end{enumerate}
\end{rem}

\begin{NB}
\begin{rem}
If $v$ is not primitive, then 
\end{rem}
\end{NB}

\begin{NB}
We don't need the following anymore since $t<t_1$.
\begin{lem}
Assume that $0<k \leq \ell$.
Let $E$ be a general extension fitting in an exact sequence
\begin{equation}\label{eq:nonsplit}
0 \to E_1 \to E \to E_2 \to 0
\end{equation}
where $E_1 \in {\cal M}_H(\ell,0,0)$ and $E_2 \in {\cal M}_H(0,kC,-1)$.
Then $E$ is torsion free.
\end{lem}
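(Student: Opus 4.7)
The plan is to show that for a generic extension class $e\in \Ext^1(E_2,E_1)$ the sheaf $E$ is torsion free, by bounding those torsion subsheaves of $E_2$ which can lift to $E$. Since $v(E_1)=(\ell,0,0)$ is isotropic, $E_1$ is a semi-homogeneous vector bundle (Remark \ref{rem:semi-hom}), hence locally free. Therefore any torsion subsheaf $T\subset E$ meets $E_1$ in zero, so $T$ injects into $E/E_1\cong E_2$; because $E_2$ is a stable one-dimensional sheaf, every nonzero such $T$ is a pure one-dimensional subsheaf of $E_2$.

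A subsheaf $T\subset E_2$ lifts to $E$ if and only if the extension class $e$ lies in the kernel of the restriction map $\rho_T\colon \Ext^1(E_2,E_1)\to \Ext^1(T,E_1)$. I would establish two things: (a) for each nonzero torsion subsheaf $T\subset E_2$, the map $\rho_T$ is nonzero, so $\ker\rho_T$ is a proper linear subspace; and (b) as $T$ varies over the bounded family of torsion subsheaves of $E_2$, the union $\bigcup_T \ker\rho_T$ is a proper closed subvariety of $\Ext^1(E_2,E_1)$. A generic $e$ then avoids this union, and the corresponding $E$ is torsion free.

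For (a), write $v(T)=(0,k'C,a'_T)$ with $0<k'\le k$. The Gieseker stability of $E_2$ (slope $-1/k$) gives $a'_T/k'<-1/k$, hence $a'_T\le -1$. The Mukai pairing yields $\chi(T,E_1)=-\langle v(T),v(E_1)\rangle=\ell a'_T<0$, while $\Hom(T,E_1)=0$ since $T$ is torsion and $E_1$ is locally free; Serre duality on $X$ identifies $\Ext^2(T,E_1)\cong \Hom(E_1,T)^{\vee}$. Consequently $\dim\Ext^1(T,E_1)=\dim\Hom(E_1,T)+\ell|a'_T|\ge \ell\ge 1$, so $\rho_T$ is nontrivial and $\ker\rho_T$ is a proper subspace of codimension at least $\ell|a'_T|$.

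For (b), the torsion subsheaves of $E_2$ are parametrized by a bounded family, which I would stratify by the numerical invariants $(k',a'_T)$ into finitely many Quot components. The hypothesis $k\le \ell$ should keep the dimension of each Quot stratum strictly below the codimension $\ell|a'_T|$ of the corresponding $\ker\rho_T$, so that the total incidence locus projects to a proper closed subvariety of $\Ext^1(E_2,E_1)$. The main obstacle I anticipate is exactly this final dimension count: bounding the Quot strata of one-dimensional torsion subsheaves of the degree $k$ stable sheaf $E_2$ and comparing them precisely against $\ell|a'_T|$; once that is secured, the rest is a standard genericity argument. As a sanity check, one may alternatively derive the statement from the Bridgeland stability framework of Section \ref{sect:Bridgeland}: the pair $(E_1,E_2)$ lies on a totally semi-stable wall $W_{v_1}$ on the line $\cal L$, and the general extension corresponds to a $\sigma$-stable object on the torsion-free side $t>t_1$ of Proposition \ref{prop:t_0}.
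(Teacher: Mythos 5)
Your proposal takes a genuinely different route from the paper, and as written it has two real gaps. For orientation: the paper argues by contradiction using the numerology of totally semi-stable walls. If $T \subset E$ is the torsion subsheaf and $F=E/T$, the wall-crossing structure (the decomposition \eqref{eq:tss-eq} together with the remark following Proposition \ref{prop:t_0}) forces $v(T)=\ell_1 w_1$ and $v(F)=\ell_2 w_2$ with $w_1,w_2$ isotropic, $\langle w_1,w_2\rangle=1$ and $\{\ell_1,\ell_2\}=\{1,\ell\}$. Pairing with $v=(\ell,kC,-1)$ gives $\ell=\langle v,v(T)\rangle=-\ell\ell_1 a_1$, hence $\ell_1=1$, $a_1=-1$, and then $v(F)=(\ell,(k-k_1)C,0)$ must equal $\ell w_2$ with $w_2$ integral, which is impossible since $0<k-k_1<k\le\ell$. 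So the hypothesis $k\le\ell$ enters only through this integrality step, not through any dimension count.

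The first gap is in your step (a): you deduce that $\rho_T$ is nontrivial, with kernel of codimension at least $\ell|a'_T|$, from the computation $\dim\Ext^1(T,E_1)\ge\ell|a'_T|$. That inference is invalid: the codimension of $\ker\rho_T$ is the rank of $\rho_T$, and a large target does not force a large, or even nonzero, rank. From the long exact sequence attached to $0\to T\to E_2\to E_2/T\to 0$ one only gets $\coker\rho_T\cong\ker\bigl(\Ext^2(E_2/T,E_1)\to\Ext^2(E_2,E_1)\bigr)$ with $\Ext^2(E_2/T,E_1)\cong\Hom(E_1,E_2/T)^{\vee}$, which need not vanish (take $E_1={\cal O}_X^{\oplus\ell}$ and $E_2/T$ with sections on its support); so a lower bound on the rank of $\rho_T$ requires controlling $\hom(E_1,E_2/T)$, which you do not do. The second gap is the one you flag yourself: the decisive comparison in step (b) between the dimension of the family of subsheaves $T\subset E_2$ and the codimension of $\ker\rho_T$ is not carried out, and that is precisely where the hypothesis $k\le\ell$ would have to do its work --- it must, since for $k\ge\ell+1$ the conclusion fails (the paper's discussion of the exceptional case $v=(\ell,kC,-1)$, $k\ge\ell+1$, shows the general $E$ does acquire torsion, corresponding to the admissible decomposition $v(T)=(0,k-\ell? \,C,-1)$ type splitting that the integrality argument no longer excludes). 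Your closing suggestion to derive the lemma from Proposition \ref{prop:t_0} and the wall $W_{v_1}$ with $v_1=(1,0,0)$ is in fact the paper's actual mechanism, but as stated it is a pointer rather than a proof.
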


\begin{proof}
Assume that $E$ is not torsion free. Let $T$ be the torsion subsheaf and
$F:=E/T$. Then we have an exact sequence
$$
0 \to T \to E \to F \to 0
$$
such that 
\begin{equation}
\begin{split}
v(T)= & \ell_1 w_1,\; w_1=(0,\xi_1,a_1),\\
v(F)=& \ell_2 w_2,\; w_2=(r_2,\xi_2,a_2),
\end{split}
\end{equation}
$\langle w_1^2 \rangle=\langle w_2^2 \rangle=0$, $\langle w_1,w_2 \rangle=1$ and $\{\ell_1,\ell_2\}=\{1,\ell\}$.
Since $T \to E \to E_2$ is injective,
$\xi_1=k_1 C$ with $0<k_1 \leq k$.
Since \eqref{eq:nonsplit} is a non-split sequence,
$k_1<k$.
By $\ell=\langle v,v(T) \rangle=-\ell \ell_1 a_1$, $\ell_1=1$ and $a_1=-1$.
Then $\ell \xi_2=(k-k_1)C$, which is a contradiction.
Therefore $E$ is torsion free.
\end{proof}

\begin{lem}
For a general $Z=\{ x_1,x_2,...,x_\ell \}$,
$I_Z$ fits in an exact sequence
$$
0 \to {\cal O}_X(-\sum_{i=1}^\ell C_i) \to I_Z \to \oplus_{i=1}^\ell {\cal O}_{C_i}(-x_i) \to 0
$$ 
where $x_i \in C_i$ and the algebraic equivalence class of $C_i$ are $C$.  
Hence a general $E \in {\cal M}_H(1,kC,-\ell)$ fits in an exact sequence 
\begin{equation}
0 \to E_1 \to E \to E_2 \to 0
\end{equation}
$E_1 \in {\cal M}_H(1,(k-\ell)C,0)$ and $E_2 \in {\cal M}_H(0,\ell C,-\ell)$.
\end{lem}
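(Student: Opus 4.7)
The plan is to establish the first assertion by constructing the filtration of $I_Z$ directly on $X$, and then to deduce the statement for a general $E \in {\cal M}_H(1,kC,-\ell)$ by writing $E \cong L \otimes I_Z$ for a line bundle $L$ with $c_1(L) = kC$ and applying the first assertion to $Z$.

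Since $(C^2) = 0$, the inclusion $C \hookrightarrow X$ factors through an abelian subvariety of $X$, and the quotient morphism $\pi \colon X \to X/C =: E'$ realizes $X$ as an elliptic fibration over $E'$ whose fibers are precisely the translates of $C$. In particular, through each point $x \in X$ there passes a unique member $C_x := \pi^{-1}(\pi(x))$ of the algebraic equivalence class of $C$, and distinct such members are disjoint. For a general $Z = \{x_1,\ldots,x_\ell\}$, the images $\pi(x_i) \in E'$ are pairwise distinct, hence the curves $C_i := C_{x_i}$ are pairwise disjoint. Setting $D := \sum_i C_i$, the inclusion $Z \subset D$ yields ${\cal O}_X(-D) \subset I_Z$, and the standard sequence $0 \to {\cal O}_X(-D) \to {\cal O}_X \to {\cal O}_D \to 0$ produces by a diagram chase the short exact sequence
$$
0 \to {\cal O}_X(-D) \to I_Z \to I_{Z/D} \to 0,
$$
where $I_{Z/D}$ is the ideal of $Z$ in $D$. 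By disjointness, ${\cal O}_D = \bigoplus_i {\cal O}_{C_i}$, and since each $x_i$ lies only on $C_i$ one has $I_{Z/D} = \bigoplus_i {\cal O}_{C_i}(-x_i)$, yielding the claimed sequence.

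For the second assertion, a general $E \in {\cal M}_H(1,kC,-\ell)$ is torsion-free of rank one, so $E \cong L \otimes I_Z$ for some $L \in \Pic(X)$ with $c_1(L) = kC$ and a zero-dimensional subscheme $Z$. Using $(kC)^2 = 0$, the Chern character constraint $\chi(L) - |Z| = -\ell$ forces $|Z| = \ell$; moreover, generality of $E$ translates to generality of $Z$, so the first part applies. Tensoring that sequence by $L$ gives
$$
0 \to L(-D) \to E \to \bigoplus_{i=1}^\ell L|_{C_i}(-x_i) \to 0.
$$
The kernel $E_1 := L(-D)$ is a line bundle with Chern class $(k-\ell)C$ and Euler characteristic zero, so $E_1 \in {\cal M}_H(1,(k-\ell)C,0)$. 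Each $L|_{C_i}(-x_i)$ is a line bundle of degree $(kC \cdot C_i) - 1 = -1$ on the elliptic curve $C_i$, hence has Mukai vector $(0,C,-1)$ on $X$; the direct sum is $H$-semistable with Mukai vector $(0,\ell C,-\ell)$, providing $E_2 \in {\cal M}_H(0,\ell C,-\ell)$.

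There is no substantial obstacle to this argument: it rests entirely on the foliation of $X$ by translates of $C$ coming from $(C^2)=0$, together with elementary diagram chasing. The only point deserving an explicit remark is the semistability of $E_2$, which is immediate because it is a direct sum of sheaves sharing the same reduced Hilbert polynomial with respect to $H$.
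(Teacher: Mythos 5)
Your argument is correct and complete. Note that the paper itself offers no proof of this lemma: it sits inside a discarded note (an \verb|NB| block excluded from compilation, prefaced by ``We don't need the following anymore''), so there is nothing to compare against; the statement is simply asserted. Your route is the natural one: translating $C$ to pass through the origin makes it an abelian subvariety (via $\Alb(C)=C\to X$), the quotient $\pi\colon X\to X/C$ foliates $X$ by translates of $C$, and for general $Z$ the points lie on $\ell$ distinct, pairwise disjoint fibres $C_i$, giving $\mathcal{O}_X(-D)\subset I_Z$ with quotient $I_{Z/D}=\bigoplus_i\mathcal{O}_{C_i}(-x_i)$. The second half is also fine: a general member of ${\cal M}_H(1,kC,-\ell)$ is $L\otimes I_Z$ with $|Z|=\ell$ (using $(kC)^2=0$ so that $\ch_2=-|Z|$), $E_1=L(-D)$ is a line bundle with Mukai vector $(1,(k-\ell)C,0)$, and each $i_*\bigl(L|_{C_i}(-x_i)\bigr)$ has Mukai vector $(0,C,-1)$ by Grothendieck--Riemann--Roch, so their direct sum is a semistable sheaf of class $(0,\ell C,-\ell)$, which is all that membership in the moduli stack of semistable sheaves requires.
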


\end{NB}

\subsection{Wall crossing for ${\cal M}_H(r,dH,a)$.}\label{subsect:dH}

\begin{NB}
The choice of polarization is important.
In particular if the polarization is very close to the boundary of the ample cone,
we need to change the polarization.
We show that $H=\xi$ is a good polarization if $\xi$ is ample.
\end{NB}
In this subsection, we treat the case where $\xi$ is ample and the polarization is $\xi$.
Thus we assume that $v=(r,dH,a)$. 
Let us study wall crossing for $v=(r,dH,a)$ along ${\cal L}$.
We keep the notation in subsection \ref{subsect:proof}.
Thus $E_1$ and $E_2$ be semi-stable objects in \eqref{eq:HNF-}
with Mukai vectors
\begin{equation}
v(E_i)=v_i=(r_i,\xi_i,a_i),\;(i=1,2).
\end{equation}

\begin{NB}
\begin{equation}\label{eq:tss-cond}
v=\ell_1 v_1+\ell_2 v_2,\;\langle v_1,v_2 \rangle=1,\; \langle v_1^2 \rangle=\langle v_2^2 \rangle=0,\;
 \{\ell_1,\ell_2 \}=\{ \ell,1 \}
\end{equation}

We set
\begin{equation}
v_1=(r_1,\xi_1,a_1),\;v_2=(r_2,\xi_2,a_2).
\end{equation}
\end{NB}
Then we have
\begin{equation}\label{eq:dH1}
r=\ell_1 r_1+\ell_2 r_2,\;dH=\ell_1 \xi_1+\ell_2 \xi_2,\; a=\ell_1 a_1+\ell_2 a_2
\end{equation}
and
\begin{equation}\label{eq:dH2}
(\xi_1^2)=2r_1 a_1,\;(\xi_2^2)=2r_2 a_2,\; (\xi_1 \cdot \xi_2)=r_1 a_2+r_2 a_1+1.
\end{equation}

By Lemma \ref{lem:d_1}, $r_1>0$ and
\begin{equation}\label{eq:d1}
r_2 d(\xi_1 \cdot H)-r_1 d(\xi_2 \cdot H)<0,\;
a_2 d(\xi_1 \cdot H)-a_1 d(\xi_2 \cdot H)<0.
\end{equation}

\begin{lem}\label{lem:Z-slope}
\begin{equation}
\begin{split}
r_1 d(\xi_2 \cdot H)-r_2 d (\xi_1 \cdot H)=&
(r_1 \ell_1+r_2 \ell_2)(r_1 a_2-r_2 a_1)+(r_1 \ell_1-r_2 \ell_2)\\
a_1 d(\xi_2 \cdot H)-a_2 d (\xi_1 \cdot H)=&
(a_1 \ell_1+a_2 \ell_2)(r_2 a_1-r_1 a_2)+(a_1 \ell_1-a_2 \ell_2).
\end{split}
\end{equation}
\end{lem}

\begin{proof}
By \eqref{eq:dH1} and \eqref{eq:dH2}, we see that
\begin{equation}
\begin{split}
r_1 d(\xi_2 \cdot H)-r_2 d (\xi_1 \cdot H)=&
r_1(\ell_1 (\xi_1 \cdot \xi_2)+\ell_2 (\xi_2^2))-r
_2(\ell_1 (\xi_1^2)+\ell_2 (\xi_1 \cdot \xi_2))\\
=& (r_1 \ell_1-r_2 \ell_2)(\xi_1 \cdot \xi_2)+r_1 \ell_2 (\xi_2^2)-r_2 \ell_1 (\xi_1^2)\\
=& (r_1 \ell_1-r_2 \ell_2)(r_2 a_1+r_1 a_2+1)+2r_2 \ell_2 r_1 a_2-2r_1 \ell_1 r_2 a_1\\
=& (r_1 \ell_1+r_2 \ell_2)(r_1 a_2-r_2 a_1)+(r_1 \ell_1-r_2 \ell_2).
\end{split}
\end{equation}

In the same way, we see that
\begin{equation}
\begin{split}
a_1 d(\xi_2 \cdot H)-a_2 d (\xi_1 \cdot H)=&
a_1(\ell_1 (\xi_1 \cdot \xi_2)+\ell_2 (\xi_2^2))-
a_2(\ell_1 (\xi_1^2)+\ell_2 (\xi_1 \cdot \xi_2))\\
=& (a_1 \ell_1-a_2 \ell_2)(\xi_1 \cdot \xi_2)+a_1 \ell_2 (\xi_2^2)-a_2 \ell_1 (\xi_1^2)\\
=& (a_1 \ell_1-a_2 \ell_2)(r_2 a_1+r_1 a_2+1)+2a_2 \ell_2 a_1 r_2-2a_1 \ell_1 a_2 r_1\\
=& (a_1 \ell_1+a_2 \ell_2)(r_2 a_1-r_1 a_2)+(a_1 \ell_1-a_2 \ell_2).
\end{split}
\end{equation}
\end{proof}

\begin{lem}\label{lem:r_1a_2-r_2a_1=0}
Assume that $r_1,r_2,a_1,a_2>0$.
Then $r_1 a_2-r_2 a_1=0$.
\end{lem}

\begin{proof}
We use Lemma \ref{lem:Z-slope}. 
If $r_1 a_2-r_2 a_1<0$, then we see that
\begin{equation}
\begin{split}
r_1 d(\xi_2 \cdot H)-r_2 d (\xi_1 \cdot H)
\leq & -(r_1 \ell_1+r_2 \ell_2)+(r_1 \ell_1-r_2 \ell_2)=-2r_2 \ell_2<0\\
\end{split}
\end{equation}
Hence
By \eqref{eq:d1},
this case does not occur.

If $r_1 a_2-r_2 a_1>0$, then 
\begin{equation}
\begin{split}
a_1 d(\xi_2 \cdot H)-a_2 d (\xi_1 \cdot H)
\leq & -(a_1 \ell_1+a_2 \ell_2)+(a_1 \ell_1-a_2 \ell_2)=-2a_2 \ell_2<0.
\end{split}
\end{equation}
%
By \eqref{eq:d1},
this case does not occur either.
Hence $r_1 a_2-r_2 a_1=0$.
\end{proof}

\begin{lem}\label{lem:tss-L}
Assume that there is a totally semi-stable wall on ${\cal L}$.
Then there are relatively prime integers $p,q$ and integers
$k_1>0$, $k_2 \geq 0$ such that 
\begin{equation}\label{eq:wall-dH}
\begin{split}
&v=((\ell_1 k_1+\ell_2 k_2)p,\ell_1 \xi_1+\ell_2 \xi_2,(\ell_1 k_1+\ell_2 k_2)q),\;\{\ell_1,\ell_2\}=\{\ell,1\}\\
& (\xi_i^2)=2k_i^2 pq,\; (\xi_1 \cdot \xi_2)=2k_1 k_2 pq+1.
\end{split}
\end{equation}
\end{lem}

\begin{proof}
We note that
$$
r_1 \ell_1+r_2 \ell_2=r>0,\;
a_1 \ell_1+a_2 \ell_2=a>0.
$$
By Lemma \ref{lem:a>0},
$a_1>0$ and
we have 5 cases to treat.

(1) $r_2<0$ and $a_2>0$.
In this case, $r_1 a_2-r_2 a_1 >0$. Hence
\begin{equation}
\begin{split}
& a_1 d(\xi_2 \cdot H)-a_2 d(\xi_1 \cdot H) \leq -2a_2 \ell_2<0.
\end{split}
\end{equation}
Therefore this case does not occur.

(2)
$r_2=a_2=0$.
In this case,
$(\xi_2^2)=0$ and $(\xi_1 \cdot \xi_2)=1$.
Hence 
$$
v=(\ell_1 r_1,\ell_1 \xi_1+\ell_2 \xi_2,\ell_1 a_1). 
$$
Then $v$ is of the form \eqref{eq:wall-dH}, where   
$r_1=k_1 p, a_1=k_1 q$ and $k_2=0$.

(3)
$r_2=0$ and $a_2>0$.
In this case, $r_1 a_2-r_2 a_1>0$. Hence
\begin{equation}
\begin{split}
& a_1 d(\xi_2 \cdot H)-a_2 d(\xi_1 \cdot H) \leq -2a_2 \ell_2<0.
\end{split}
\end{equation}
Therefore this case does not occur. 

(4)
$r_2>0$ and $a_2 \leq 0$.
In this case, $r_1 a_2-r_2 a_1<0$.
Hence
\begin{equation}
\begin{split}
& r_1 d(\xi_2 \cdot H)-r_2 d(\xi_1 \cdot H) \leq -2r_2 \ell_2<0.\\
\end{split}
\end{equation}
Therefore this case does not occur. 

(5)
$r_2$ and $a_2>0$. In this case,
Lemma \ref{lem:r_1a_2-r_2a_1=0} implies that
$r_1 a_2-r_2 a_1=0$.
Then there are relatively prime integers $p,q$ and positive integers
$k_i$ ($i=1,2$) such that $r_i=k_i p$ and $a_i=k_i q$.
Then we see that
$$
(\xi_i^2)=2k_i^2 pq,\; (\xi_1 \cdot \xi_2)=2k_1 k_2 pq+1.
$$
Therefore the claim holds.
\end{proof}

\begin{NB}
Assume that $v=(r,0,-1)e^{kH}$, where $rk=d$.
If $k<0$, then $\Phi_{X \to \widehat{X}}^{{\cal P}^{\vee}}(E)$
is not a sheaf 
see \cite[Prop. 3.5]{CNY2}.
\end{NB}

\begin{prop}
Assume that $d>0$ and $a>0$.
\begin{enumerate}
\item[(1)]
If $v$ is not written as in \eqref{eq:wall-dH}, then
$\Phi(E)^{\vee} \in {\cal M}_{\widehat{H}}(a,d \widehat{H},r)$
 for a general $E \in {\cal M}_H(r,dH,a)$.
\item[(2)]
Assume that 
\begin{enumerate}
\item
$X$ is not a product of two elliptic curves and $\gcd(r,a) \leq \ell$ or 
\item $d > r$.
\end{enumerate}
Then
$v$ is not written as in \eqref{eq:wall-dH}.
\end{enumerate}
\end{prop}

\begin{proof}
(1) is a consequence of Lemma \ref{lem:tss-L}.

(2)
Assume that $v$ is written as in \eqref{eq:wall-dH}.
Since $\gcd(r,a)=\ell_1 k_1+\ell_2 k_2$, if $k_2 \ne 0$, then $\gcd(r,a) > \ell$.
We note that
$$
d((\ell_2 \xi_2-\ell_1 \xi_1)\cdot H)=2pq (\ell_2 k_2+\ell_1 k_1)(\ell_2 k_2-\ell_1 k_1).
$$
Since 
$$
d(\xi_1 \cdot H) \equiv \ell_2 \mod 2pq,\;
d(\xi_2 \cdot H) \equiv \ell_1 \mod 2pq,
$$
and one of $\ell_i$ is 1,
we get $\gcd(d,2pq)=1$.
Since $v$ is primitive, we also get 
$\gcd(d,\ell_1 k_1+\ell_2 k_2)=1$.
Hence 
we get $d \mid (\ell_2 k_2-\ell_1 k_1)$.
Hence $d \leq |\ell_2 k_2-\ell_1 k_1| \leq \ell_1 k_1+\ell_2 k_2 \leq r$.
Therefore the claim holds.
\end{proof}

\begin{lem}\label{lem:tssH}
Assume that $H$ is a totally semi-stable wall with respect to $v=(r,dH,a)$.
\begin{enumerate}
\item
[(1)]
We have a decomposition of $v$ such that
\begin{equation}
\begin{split}
& v=\ell_1 v_1+\ell_2 v_2,\;\langle v_1,v_2 \rangle=1,\; \langle v_1^2 \rangle=\langle v_2^2 \rangle=0,\;
 \{\ell_1,\ell_2 \}=\{ \ell,1 \},\\
& v_i=(r_i,\xi_i,a_i),\;r_i>0,\; (i=1,2),\;\; r_2(\xi_1 \cdot H)-r_1(\xi_2 \cdot H)=0, \,\,
r_1 a_2=r_2 a_1.
\end{split}
\end{equation}
\item[(2)]
Let $E$ be a $\mu$-semi-stable sheaf with respect to $H$ and
$v(E)=v$. Then
$E$ is a Gieseker semi-stable locally free sheaf which is $S$-equivalent to
$\oplus_{i=1}^2 (\oplus_{j=1}^{\ell_i} F_{ij})$,
where $F_{ij} \in {\cal M}_H(v_i)$.
\end{enumerate}
\end{lem}

\begin{proof}
(1)
If there is no $\mu$-stable sheaf $E$ with $v(E)=v$, then
we have a decomposition of $v$:
\begin{equation}
v=\ell_1 v_1+\ell_2 v_2,\;\langle v_1,v_2 \rangle=1,\; \langle v_1^2 \rangle=\langle v_2^2 \rangle=0,\;
 \{\ell_1,\ell_2 \}=\{ \ell,1 \}
\end{equation}
where 
\begin{equation}
v_i=(r_i,\xi_i,a_i),\;r_i>0,\; (i=1,2),\;\; r_2(\xi_1 \cdot H)-r_1(\xi_2 \cdot H)=0.
\end{equation}
By Lemma \ref{lem:Z-slope}, we see that $r_1 a_2=r_2 a_1$ and $r_1 \ell_1=r_2 \ell_2$.
Hence we have a description \eqref{eq:wall-dH} of $v$ such that $\ell_1 k_1=\ell_2 k_2$.

(2)
For simplicity, we assume that $\ell_1=\ell$.
For a $\mu$-semi-stable sheaf $E$ with $v(E)=kv_1+v_2$ ($k>0$), 
we shall show that there is an exact sequence
\begin{equation}\label{eq:JHF}
0 \to E_1 \to E \to E_2 \to 0
\end{equation}
such that $E_1$ and $E_2$ are $\mu$-semi-stable with
$\{v(E_1),v(E_2) \}=\{v_1,(k-1)v_1+v_2 \}$.

Let ${\cal E}$ be a universal family on $X \times M_H(v_1)$.
Assume that 
$$
\Hom({\cal E}_{|X \times \{ y \}},E)=\Ext^2({\cal E}_{|X \times \{ y \}},E)=0
$$
for all $y \in M_H(v_1)$, then 
$\Phi_{X \to M_H(v_1)}^{{\cal E}^{\vee}}(E)[1]$ is a line bundle, and hence
its Mukai vector is isotropic.
Therefore there is a non-zero homomorhism
$\varphi:{\cal E}_{|X \times \{ y \}} \to E$ or a non-zero homomorphism
$\psi:E \to {\cal E}_{|X \times \{ y \}}$.
Obviously $\varphi$ is injective and $\coker \varphi$ is $\mu$-semi-stable with
the Mukai vector $(k-1)v_1+v_2$.
In this case we set $E_1={\cal E}_{|X \times \{ y \}}$ and $E_2:=\coker \varphi$.
For the homomorphism $\psi$,
$\coker \psi$ is a 0-dimensional sheaf and $\ker \psi$ is $\mu$-semi-stable.
We prove that $\coker \psi=0$.
We set $v(\coker \psi)=(0,0,n)$. Then 
$v(\ker\psi)=(k-1)v_1+v_2+(0,0,n)$ and 
we see that
$$
0 \leq \langle v(\ker \psi)^2 \rangle=2(k-1)-2n((k-1)r_1+r_2) \leq -2nr_2.
$$
Hence $n=0$, and $\psi$ is surjective.
So we set $E_1:=\ker \psi$ and $E_2:={\cal E}_{|X \times \{ y \}}$.
Then we get a desired exact sequence \eqref{eq:JHF}.
By the induction on $k$, we get (2).
\end{proof}

\begin{lem}\label{lem:wall-dH2}
We assume that $a \leq 0$.
Then there is no totally semi-stable wall on ${\cal L}$.
In particular $\Phi(E)[1] \in {\cal M}_{\widehat{H}}(-a,d\widehat{H},-r)$
for a general $E \in {\cal M}_H(r,dH,a)$.
\end{lem}

\begin{proof}
By Lemma \ref{lem:a<0}, we have two possibilities:

(1)
$r_2 \leq 0$ and $a_1,a_2<0$.
In this case $r_1 a_2-r_2 a_1<0$.
Since $a_1 d(\xi_2 \cdot H)-a_2 d(\xi_1 \cdot H) \leq 2a_1 \ell_1<0$,
this case does not occur by \eqref{eq:d1}.

(2)
$r_2>0$ and $a_2<0$. 
In this case, we treat by cases according as the sign of $a_1$.
\begin{enumerate}
\item
If $a_1 \geq 0$, then
$r_1 a_2-r_2 a_1<0$.
Hence 
\begin{equation}
\begin{split}
& r_1 d(\xi_1 \cdot H)-r_2 d(\xi_2 \cdot H) \leq -2r_2 \ell_2<0.\\
\end{split}
\end{equation}
Therefore this case does not occur.
\item
If $a_1<0$, then 
$\langle v,v_i \rangle=(dH \cdot \xi_i)-ra_i-r_i a \geq 3$.
Hence this case does not occur.
\end{enumerate}
\end{proof}

\begin{thm}\label{thm:wBN}
Assume that $v=(r,dH,a)$.
\begin{enumerate}
\item[(1)] 
If $d \geq 0$,
the weak Brill-Noether property holds.
Thus there is $E \in {\cal M}_H(v)$ such that $E$ has at most one nonzero cohomology
group.
\item[(2)]
If $d<0$, then
the weak Brill-Noether property holds unless $v=(r,0,-1)e^{kH}$, where
$k$ is a negative integer.
\end{enumerate}
\end{thm}


\begin{proof}
(1)
If $d=0$, then we see that $a \leq 0$ and $H^0(X,E)=H^2(X,E)=0$ for a general 
$E \in {\cal M}_H(v)$.
Therefore we assume that $d>0$. Then
$H^2(X,E)=0$ for all $E \in {\cal M}_H(v)$ by the Serre duality and the stability of $E$.
Assume that $a>0$.
If there is no totally semi-stable wall on ${\cal L}$, 
then $H^1(X,E)=0$ for a general $E \in {\cal M}_H(v)$.
If there is a totally semi-stable wall on ${\cal L}$, then
Lemma \ref{lem:tss-L} implies
$E$ fits in an exact sequence
$$
0 \to E_1 \to E \to E_2 \to 0
$$
where $E_1$ and $E_2$ are semi-homogeneous sheaves with
$v(E_i)=\ell_i v_i$.
Since $a_1>0$ and $a_2 \geq 0$, we see that
$H^1(X,E)=0$ for a general $E$. 
If $a \leq 0$, then the claim is a consequence of Lemma \ref{lem:wall-dH2}.

(2)
Assume that $v \ne (r,0,-1)e^{kH}$.
If there is a $\mu$-stable locally free sheaf $E \in {\cal M}_H(v)$, then
$E^{\vee}$ is also a $\mu$-stable locally free sheaf.
Hence the claim follows from (1) and the Serre duality
$H^1(X,E) \cong H^1(X,E^{\vee})^{\vee}$.

We first assume that $H$ is general with respect to $H$.
In this case \cite[Prop. 3.5]{KY} implies that 
a general $E \in {\cal M}_H(v)$ is a $\mu$-stable locally free sheaf, and hence the claim holds.

We next assume that $H$ is not general.
\begin{NB}
If there is no $\mu$-stable sheaf $E$ with $v(E)=v$, then
we have a decomposition of $v$:
\begin{equation}
v=\ell_1 v_1+\ell_2 v_2,\;\langle v_1,v_2 \rangle=1,\; \langle v_1^2 \rangle=\langle v_2^2 \rangle=0,\;
 \{\ell_1,\ell_2 \}=\{ \ell,1 \}
\end{equation}
where 
\begin{equation}
v_i=(r_i,\xi_i,a_i),\;r_i>0,\; (i=1,2),\;\; r_2(\xi_1 \cdot H)-r_1(\xi_2 \cdot H)=0.
\end{equation}
By Lemma \ref{lem:Z-slope}, we see that $r_1 a_2=r_2 a_1$ and $r_1 \ell_1=r_2 \ell_2$.
Hence we have a description \eqref{eq:wall-dH} of $v$ such that $\ell_1 k_1=\ell_2 k_2$.
In this case, every $E \in {\cal M}_H(v)$ is $S$-equivalent to $\oplus_{i=1}^2 (\oplus_{j=1}^{\ell_i} F_{ij})$,
where $F_{ij} \in M_H(v_i)$.
\end{NB}
If there is no $\mu$-stable locally free $E$ with $v(E)=v$, then
$H$ is on a totally semi-stable wall (\cite[Prop. 3.6]{KY}). Applying Lemma \ref{lem:tssH},
we get $E^{\vee} \in {\cal M}_H(v^{\vee})$ for all $E \in {\cal M}_H(v)$, and we get (2) in this case. 
\end{proof}

As a corollary, we get the following result which
shows that the claim \cite[Thm. 0.2]{Y:aCM-abel} holds for
any polarized abelian surface.

\begin{cor}
Let $v:=(r,dH,a) \in H^*(X,{\Bbb Z})_{\alg}$ be a Mukai vector with $r>0$.
Then there is a $H$-semi-stable aCM sheaf $E$ with $v(E)=v$ if and only if
$v=(r,d_0 H,a)e^{d'H}$ with
$d_0^2 n/r \geq a \geq 0$ and $r/2 \geq |d_0|$.
\end{cor}

\begin{NB}
\begin{equation}
\begin{split}
v=&((\ell_1 k_1+\ell_2 k_2)p,\ell_1 \xi_1+\ell_2 \xi_2,(\ell_1 k_1+\ell_2 k_2)q)\\
=& \ell_1(k_1 p,\xi_1,k_1 q)+\ell_2 (k_2 p,\xi_2,k_2 q)
\end{split}
\end{equation}
where 

\begin{equation}
\gcd(p,q)=1 (p>0),\;
(\xi_i^2)=2k_i^2 pq,\; (\xi_1 \cdot \xi_2)=2k_1 k_2 pq+1.
\end{equation}

$p,q$ and $\ell_1 k_1+\ell_2 k_2$ are determined by $r$ and $a$.
\end{NB}

\begin{NB}
If $k_1 \ell_1=k_2 \ell_2$, then $v_1$ shows that $E$ is a properly semi-stable sheaf.
\end{NB}

\subsubsection{A relation to \cite{BMOY}.}
In \cite{BMOY}, we studied the movable cone of a fiber
of the albanese map $M_H(r,dH,a) \to X \times \widehat{X}$ by
studying Bridgeland walls,
where $d>0$ and $a<0$.
From the arguments, we can deduce a refinement of 
Lemma \ref{lem:wall-dH2} under the assumption $r \geq 2, d>0,a \geq 2$.

Let ${\cal I}$ be the set of isotropic Mukai vectors $u$
such that $\langle u,v \rangle=1,2$.
\begin{NB}
In \cite{BMOY}
we classified $u \in {\cal I}$ separating
$(0,rH,d(H^2))$ and $(-1,\tfrac{-a}{d(H^2)}H,0)$.
Thus we assume that
\begin{equation}\label{eq:condition1}
\begin{split}
0 > \langle u,(0,rH,d(H^2)) \rangle
\langle u,(-1,\tfrac{-a}{d(H^2)}H,0) \rangle 
=(r(H,\eta)-pd(H^2))(\tfrac{-a}{d(H^2)}(H,\eta)+q).
\end{split}
\end{equation}
It is the same as the classification of wall $W_u$ intersecting ${\cal L}$.
In particular there is no wall unless $X$ is a product of two elliptic curve and
$d=1$.
Under this condition we have 
\end{NB}
\begin{NB}
$$
-w_0^{\vee}=(0,rH,(H^2)d),\;
-w_1^{\vee}=(-(H^2)d,-aH,0).
$$
Hence $\langle w_i,u^{\vee} \rangle=-\langle -w\i^{\vee},u \rangle$. 
\end{NB}
Let $W$ be a wall and take $(0,t_\pm H)$ from adjacent chambers.
If 
$$
\dim ({\cal M}_{(0,t_\pm H)}(v) \setminus  {\cal M}_{(0,t_\mp H)}(v)) \geq
\dim {\cal M}_{(0,t_\pm H)}(v)-1,
$$
then there is $u \in {\cal I}$ such that $W=W_u$.
It is easy to see that
$(0,tH) \in W_u$ if and only if  
$$
\frac{t^2}{2}(H^2) \langle u,(0,rH,d(H^2)) \rangle+\langle u,(-d(H^2),-aH,0) \rangle=0.
$$
By the proof of
\cite[Lem. 6.3]{BMOY}, $W_u \cap {\cal L} =\emptyset$ for all $u \in {\cal I}$.

\begin{prop}
We have a birational map
$M_H(r,dH,a) \cdots \to M_{\widehat{H}}(-a,d\widehat{H},-r)$ which is defined
by $E \mapsto \Phi(E)[1]$
up to codimension 1,
unless (1) $v=(r,H,-1)$ or $v=(1,H,a)$ and (2)
there is a divisor $\eta$ such that
$(\eta,H)=1$, $(\eta^2)=0$.
\end{prop}

\begin{rem}
If $(H^2)=2$, then the polarized dual $(\widehat{X},\widehat{H})$ of $X$ is isomorphic to $(X,H)$.
Hence $\Phi[1]$ induces a birational involution of
$M_H(r,dH,-r)$.
\end{rem}

\section{Appendix.}\label{sect:appexdix}

\subsection{Relation of Mukai vectors defining totally semi-stable walls.}

Let $W$ and $W'$ be totally semi-stable walls with $(0,t_0 H) \in W$ and
$(0,t_0' H) \in W'$.
We assume that $t_0<t_0'$ and
there is no totally semi-stable wall containing $(0,tH)$ with $t_0<t<t_0'$.
Let 
\begin{equation}
v=\ell_1 v_1+\ell_2 v_2,\;\langle v_1,v_2 \rangle=1,\; \langle v_1^2 \rangle=\langle v_2^2 \rangle=0,\;
 \{\ell_1,\ell_2 \}=\{ \ell,1 \}
\end{equation}
and
\begin{equation}
v=\ell_1 ' v_1'+\ell_2' v_2',\;
\langle v_1',v_2' \rangle=1,\, \langle {v_1'}^2 \rangle=\langle {v_2'}^2 \rangle=0,\;
 \{\ell_1',\ell_2' \}=\{ \ell,1 \}
\end{equation}
be the corresponding decompositions of $v$. 
For a general $E$, we have an exact sequence
$$
0 \to E_1 \to E \to E_2 \to 0
$$
and
$$
0 \to E_2' \to E \to E_1' \to 0,
$$
where $E_i \in {\cal M}_{(0,tH)}(\ell_i v_i)$
and $E_i' \in {\cal M}_{(0,tH)}(\ell_i' v_i')$
are direct sum of stable objects.
We set 
\begin{equation}
\begin{split}
v_i=& (r_i,\xi_i,a_i),\; \xi_i=d_i H+D_i,\; D_i \in H^\perp, \; (i=1,2),\\
v_i'=& (r_i',\xi_i',a_i'),\; \xi_i'=d_i' H+D_i',\; D_i' \in H^\perp, \; (i=1,2).
\end{split}
\end{equation}
By our assumption $t_0<t_0'$, we have
$$
\frac{a_1' d_2'-a_2' d_1'}{r_1' d_2'-r_2' d_1'}>\frac{a_1 d_2-a_2 d_1}{r_1 d_2-r_2 d_1}.
$$
We shall prove that
\begin{equation}
\begin{split}
& r_1d_2-r_2 d_1>  r_1' d_2'-r_2' d_1'>0,\\
& a_1' d_2'-a_2' d_1' >  a_1 d_2-a_2 d_1>0.
\end{split}
\end{equation}

We note that 
$$
r_1 d-r d_1>0,\, r_1' d-r d_1'>0,\;
r_1>0,\;r_1'>0
$$
(Lemma \ref{lem:d_1}).

\begin{lem}\label{lem:dd'}
$r_1 d_1'-r_1' d_1>0$ and $r_2' d_2-r_2 d_2' \geq 0$.
Moreover $r_2' d_2-r_2 d_2' > 0$ unless $r_2=r_2'=0$.
\end{lem}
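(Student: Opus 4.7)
The plan is to produce sheaf-level inclusions among the semi-homogeneous factors at the two walls, and then invoke $\mu_H$-slope comparison for $\mu$-semistable bundles.

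\emph{Slope setup.} By Lemma~\ref{lem:d_1} applied to each of $W$ and $W'$, both $E_1$ and $E_1'$ are semi-homogeneous vector bundles with $r_1,r_1'>0$ and $d_1,d_1'>0$. Using the formula \eqref{eq:mu} together with the hypothesis $t_0<t_0'$, for every $t\in(t_0,t_0')$ one has
$$
\mu_t(v_1)<\mu_t(v)<\mu_t(v_1')\quad\text{and}\quad \mu_t(v_2')<\mu_t(v)<\mu_t(v_2),
$$
so the $\mu_t$-slopes of $v_1,v_1'$ (resp.\ $v_2',v_2$) bracket that of $v$ with the inequalities on the stated sides.

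\emph{Vanishing of $E_1\cap E_2'$.} Next, I would show $E_1\cap E_2'=0$ inside $E$ in the heart ${\cal A}_{(0,tH)}$, which is independent of $t>0$ since $\beta=0$. Any common subobject $K$ would be a subobject of the polystable $E_1$ (whose stable summands carry Mukai vector $v_1$) and of the polystable $E_2'$ (whose summands carry $v_2'$); since $v_1$ and $v_2'$ are non-proportional (otherwise $W_{v_1}=W_{v_2'}$, i.e., $W=W'$, contradicting the hypothesis) and stable objects with distinct Mukai vectors admit no nonzero morphisms, we must have $K=0$. Consequently, the composition $\alpha:E_1\hookrightarrow E\twoheadrightarrow E/E_2'=E_1'$ has trivial kernel, producing an injection $E_1\hookrightarrow E_1'$.

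\emph{Proof of the first inequality.} The resulting inclusion of $\mu_H$-semistable bundles $E_1\hookrightarrow E_1'$ gives $\mu_H(v_1)\leq\mu_H(v_1')$. Strict inequality follows because the stable summands of $E_1$ (type $v_1$) and $E_1'$ (type $v_1'$) have non-isomorphic Mukai vectors, so $E_1'/E_1$ has positive rank. Translating $d_1/r_1<d_1'/r_1'$ into components yields $r_1d_1'-r_1'd_1>0$.

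\emph{Proof of the second inequality.} The parallel argument with the composition $E_2'\hookrightarrow E\twoheadrightarrow E/E_1=E_2$ gives, by the same non-intersection argument, an injection $E_2'\hookrightarrow E_2$ in the heart. By Proposition~\ref{prop:opp}, the sign of $r_2$ (resp.\ $r_2'$) determines whether $E_2$ (resp.\ $E_2'$) is a coherent sheaf, torsion, or a shift of a bundle. If both $r_2,r_2'>0$, slope comparison of bundles yields the strict inequality; if exactly one vanishes, the inclusion forces the other to carry positive rank and still gives strict inequality; if both $r_2=r_2'=0$, both factors are torsion and the two sides of the inequality vanish identically. The negative-rank cases reduce to the positive case by shifting by $[-1]$ and comparing the underlying bundles.

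\emph{Main obstacle.} The hard step is the second one above — rigorously establishing $E_1\cap E_2'=0$, which requires careful bookkeeping of the polystable decompositions together with the non-proportionality of $v_1$ and $v_2'$ coming from $W\neq W'$ — and its adaptation in the fourth step, where $E_2$ and $E_2'$ may be genuine two-term complexes rather than honest sheaves, so the injection must be tracked through cohomology.
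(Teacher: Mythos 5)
Your overall strategy---producing morphisms between the destabilizing factors at the two walls and then comparing $\mu_H$-slopes of $\mu$-semistable bundles---is the same as the paper's, but the key step is carried out incorrectly, and the gap sits exactly where you flag the main obstacle. You try to prove the sheaf-level \emph{injections} $E_1\hookrightarrow E_1'$ and $E_2'\hookrightarrow E_2$ by showing $E_1\cap E_2'=0$ inside $E$, and you justify this by asserting that ``stable objects with distinct Mukai vectors admit no nonzero morphisms.'' That assertion is false in general: it holds only for stable objects of the \emph{same phase} (Schur's lemma), whereas $v_1$ and $v_2'$ have different phases at $t\in(t_0,t_0')$; moreover a subobject of a polystable object in the heart need not be a direct sum of its stable summands unless it has the same phase. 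So $E_1\cap E_2'=0$ is not established --- and the lemma does not actually require it.

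The paper's proof avoids injectivity altogether. It first proves the \emph{vanishing} $\Hom(E_2,E_1')=0$ by a case analysis on the sign of $r_2$: if $r_2>0$ one compares $\mu_H$-slopes using $d_2/r_2>d/r>d_1'/r_1'$; if $r_2=0$ then $E_2$ is torsion while $E_1'$ is locally free; if $r_2<0$ then $E_2$ is a shifted sheaf. Applying $\Hom(-,E_1')$ to $0\to E_1\to E\to E_2\to 0$ then gives an injection $\Hom(E,E_1')\hookrightarrow\Hom(E_1,E_1')$, so the surjection $E\twoheadrightarrow E_1'$ yields a \emph{nonzero} (not necessarily injective) map $E_1\to E_1'$; since both are direct sums of $\mu$-stable bundles, this already forces $r_1d_1'-r_1'd_1\ge 0$, with strictness coming from Lemma \ref{lem:mu-s} (your strictness argument via ``$E_1'/E_1$ has positive rank'' is also not a valid deduction for $\mu$-semistable sheaves: equal slopes with a positive-rank quotient is not a contradiction, and one needs the rigidity statement that a nonzero map between $\mu$-stable bundles of equal slope is an isomorphism, whence $v_1=v_1'$ and $W=W'$). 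The second inequality is handled symmetrically via $\Hom(E_2',E_1)=0$ and the resulting $\Hom(E_2',E_2)\ne 0$, again with a case analysis on the signs of $r_2,r_2'$ (including the observation that $r_2'=0$ forces $r_2\le 0$, which is where the exceptional case $r_2=r_2'=0$ arises). If you replace your intersection argument by these two $\Hom$-vanishing computations, the rest of your outline goes through.
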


\begin{proof}
We first prove that
\begin{equation}\label{eq:E_2E_1'}
\Hom(E_2,E_1')=0.
\end{equation}
Assume that $r_2>0$. Then 
$\frac{d_2}{r_2}>\frac{d}{r}>\frac{d_1'}{r_1'}$,
Hence we get \eqref{eq:E_2E_1'}.
Assume that $r_2=0$. Then
$E_2$ is a torsion sheaf. Hence we get \eqref{eq:E_2E_1'}.
Assume that $r_2<0$.
Then $E_2=H^{-1}(E_2)[1]$. Hence we get \eqref{eq:E_2E_1'}.

Then we get an injective homomorphism
$$
0 \to \Hom(E,E_1') \to \Hom(E_1,E_1').
$$
Since $E_1$ and $E_1'$ are direct sum of $\mu$-stable vector bundles, 
we get $r_1 d_1'-r_1' d_1 \geq 0$.
If the equality holds, then Lemma \ref{lem:mu-s} implies $v_1=v_1'$, which is a contradiction.
Therefore $r_1 d_1'-r_1' d_1 >0$.

We next prove that $r_2' d_2-r_2 d_2' \geq 0$.
If $r_2'>0$ and
$r_2 \leq 0$, then $r_2' d_2-r_2 d_2'>0$.
Assume that $r_2'>0$ and $r_2>0$.
Then $\Hom(E_2',E_1)=0$ by $\frac{d_2'}{r_2'}>\frac{d_1}{r_1}$.
Hence 
we get an injective homomorphism
$$
0 \to \Hom(E_2',E) \to \Hom(E_2',E_2).
$$
Hence $r_2' d_2-r_2 d_2'>0$.

Assume that $r_2'=0$.
Since $E_2'$ is a torsion sheaf, $\Hom(E_2',E_1)=0$.
Hence
we get an injective homomorphism
$$
0 \to \Hom(E_2',E) \to \Hom(E_2',E_2).
$$
Then we get $r_2 \leq 0$. 
Therefore $r_2' d_2-r_2 d_2' =-r_2 d_2' \geq 0$ and the equality does not hold
unless $r_2=0$.

Assume that $r_2'<0$ and $r_2<0$.
Then $E_2[-1]$ and $E_2'[-1]$ are direct sum of $\mu$-stable vector bundles. 
We see that $H^{-1}(E)=H^{-1}(E_2')$ and $H^{-1}(E) \to H^{-1}(E_2)$ is injective.
Hence we get $\frac{d_2}{r_2}>\frac{d_2'}{r_2'}$.
Therefore $r_2' d_2-r_2 d_2' > 0$.
\end{proof}

\begin{NB}
$r_2 d_1-r_1 d_2<0$ and $r_2' d_1'-r_1' d_2'<0$.

Then 
\begin{equation}
\frac{d_2'}{r_2'}<\frac{d_2}{r_2},\;\frac{d_1}{r_1}<\frac{d_1'}{r_1'}.
\end{equation}
$r_1 d_2-r_2 d_1>r_1' d_2'-r_2' d_1'>0$.

$\Hom(E_2',E_2) \ne 0$. Hence $\langle v_2',v_2 \rangle<0$.
Since $\Hom(E_2,E_1')=0$, $\Hom(E_1,E_1') \ne 0$. Hence
$\langle v_1,v_1' \rangle<0$.

Assume that $r_2<0$ and $r_2'<0$.
Then 
$$
r_2' d_2-r_2 d_2'>0,\; r_1 d_1'-r_1' d_1>0.
$$
\end{NB}

\begin{lem}\label{lem:mu-s}
Let $F_1$ and $F_2$ be $\mu$-stable vector bundles with
$(c_1(F_1^{\vee} \otimes F_2) \cdot H)=0$.
Then $\Hom(F_1,F_2) \ne 0$ implies $F_1 \cong F_2$.
\end{lem}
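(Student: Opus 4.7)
The plan is to first rewrite the numerical hypothesis in a usable form and then apply the standard principle that a nonzero morphism between $\mu$-stable sheaves of the same slope is an isomorphism. Since $c_1(F_1^{\vee}\otimes F_2) = \rk(F_1)\,c_1(F_2) - \rk(F_2)\,c_1(F_1)$, the condition $(c_1(F_1^{\vee}\otimes F_2)\cdot H)=0$ is precisely the slope equality $\mu_H(F_1)=\mu_H(F_2)$. Given any nonzero $\phi\in\Hom(F_1,F_2)$, I would argue in three steps: (a) $\phi$ is injective, (b) $\rk F_1=\rk F_2$, and (c) the cokernel is torsion supported in codimension $\ge 2$, so that dualizing recovers an isomorphism $F_1\cong F_2$.

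For step (a), suppose $\ker\phi\ne 0$ and let $\bar K\subset F_1$ be its saturation. Since $\phi\ne 0$, the torsion-free quotient $F_1/\bar K$ has positive rank, so $0<\rk\bar K<\rk F_1$. The $\mu$-stability of $F_1$ then gives $\mu_H(\bar K)<\mu_H(F_1)$, whence $\mu_H(F_1/\bar K)>\mu_H(F_1)=\mu_H(F_2)$. The map $\phi$ factors through the injection $F_1/\bar K\hookrightarrow F_2$; letting $J\subset F_2$ be the saturation of the image, one gets $\mu_H(J)\ge\mu_H(F_1/\bar K)>\mu_H(F_2)$, which violates $\mu$-stability of $F_2$. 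Hence $\phi$ is injective. For step (b), if one had $\rk F_1<\rk F_2$, the saturation of $F_1$ inside $F_2$ would be a proper saturated subsheaf with slope $\ge\mu_H(F_1)=\mu_H(F_2)$, again contradicting $\mu$-stability of $F_2$; so $\rk F_1=\rk F_2$ and $T:=F_2/F_1$ is torsion. The slope equality together with equal ranks forces $(c_1(T)\cdot H)=0$, which rules out a codimension-one component and leaves $T$ supported in codimension $\ge 2$.

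For step (c), I would dualize the short exact sequence $0\to F_1\to F_2\to T\to 0$. Because $X$ is a smooth surface and $T$ has codimension $\ge 2$ support, $\mathcal{H}om(T,\mathcal{O}_X)=\mathcal{E}xt^1(T,\mathcal{O}_X)=0$, so dualizing yields $F_2^{\vee}\simto F_1^{\vee}$, and dualizing once more gives $F_1\cong F_2$ since both are locally free, hence reflexive. I expect no serious obstacle here; the only point demanding care is the bookkeeping with saturations and torsion in step (a), where one must distinguish $\ker\phi$ from its saturation and verify that the saturation of the image inside $F_2$ is a proper saturated subsheaf rather than all of $F_2$ (which is excluded automatically once $\mu_H$ of a torsion-free quotient strictly exceeds $\mu_H(F_2)$).
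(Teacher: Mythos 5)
Your argument is correct and follows essentially the same route as the paper: a nonzero map between $\mu$-stable bundles of equal $H$-slope is injective with cokernel supported in dimension zero, hence an isomorphism of locally free sheaves. The only cosmetic difference is at the last step, where the paper concludes by observing that $\det f$ is an isomorphism while you dualize the exact sequence twice; both are standard and equivalent here.
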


\begin{proof}
For a non-trivial homomorphism $f:F_1 \to F_2$,
we see that $\ker f=0$ and $\coker f$ is of 0-dimensional.  
Then $\det f$ is an isomorphism, which implies $f$ is an isomorphism.
\end{proof}

\begin{prop}
$r_1d_2-r_2 d_1>r_1' d_2'-r_2' d_1'>0$.
\end{prop}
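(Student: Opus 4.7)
The plan is to reduce the proposition to a single algebraic identity that packages the statement into a direct application of Lemma \ref{lem:dd'}, with no case analysis on which of $\ell_i,\ell_j'$ equals $1$ or $\ell$.

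First, using $v=\ell_1 v_1+\ell_2 v_2$ componentwise in $(r,d)$, one has $\ell_2 r_2 = r-\ell_1 r_1$ and $\ell_2 d_2 = d-\ell_1 d_1$, so a one-line calculation yields
\begin{equation*}
\ell_2(r_1 d_2 - r_2 d_1) = r_1 d - r d_1,
\end{equation*}
and the analogous identity holds for the primed data. The right-hand positivity $r_1' d_2' - r_2' d_1' > 0$ is then immediate: Lemma \ref{lem:d_1}, applied to the decomposition $v = \ell_1' v_1' + \ell_2' v_2'$, already supplies $r_1' d - r d_1' > 0$, and $\ell_2' > 0$.

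The main inequality will follow from the identity
\begin{equation*}
\ell\bigl[(r_1 d_2 - r_2 d_1) - (r_1' d_2' - r_2' d_1')\bigr] \;=\; \ell_2\ell_2'(r_2' d_2 - r_2 d_2') + \ell_1\ell_1'(r_1 d_1' - r_1' d_1),
\end{equation*}
which I would obtain by expanding
\begin{equation*}
\ell_2\ell_2'(r_2' d_2 - r_2 d_2') \;=\; (r-\ell_1' r_1')(d-\ell_1 d_1) - (r-\ell_1 r_1)(d-\ell_1' d_1'),
\end{equation*}
collecting terms into $\ell_1(r_1 d - r d_1) - \ell_1'(r_1' d - r d_1') - \ell_1\ell_1'(r_1 d_1' - r_1' d_1)$, and then substituting the identities from the previous paragraph together with $\ell_1\ell_2 = \ell_1'\ell_2' = \ell$ (which holds because $\{\ell_1,\ell_2\}=\{\ell_1',\ell_2'\}=\{1,\ell\}$). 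With the identity in hand, Lemma \ref{lem:dd'} contributes exactly the two inputs needed for the right-hand side: $r_2' d_2 - r_2 d_2'\ge 0$ and the strict inequality $r_1 d_1' - r_1' d_1 > 0$. Since all $\ell_i,\ell_i'>0$, the right-hand side is strictly positive, and dividing by $\ell>0$ gives the desired strict inequality.

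The only real obstacle I anticipate is bookkeeping in the expansion above — in particular, making sure the sign pattern $\ell_1(r_1 d - r d_1) - \ell_1'(r_1' d - r d_1') - \ell_1\ell_1'(r_1 d_1' - r_1' d_1)$ drops out cleanly, and that the symmetric role played by the product $\ell_1\ell_2 = \ell$ genuinely eliminates the need for a case split on which of $\ell_i,\ell_i'$ is $1$. Once that algebraic step is verified, Lemma \ref{lem:d_1} handles the positivity and Lemma \ref{lem:dd'} handles the comparison, so nothing further is required.
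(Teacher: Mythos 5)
Your proof is correct and is essentially the paper's own argument: the paper's proof consists precisely of the identity $\ell(r_1d_2-r_2d_1)-\ell(r_1'd_2'-r_2'd_1')=\ell_1\ell_1'(r_1d_1'-r_1'd_1)+\ell_2\ell_2'(d_2r_2'-r_2d_2')$ combined with Lemma \ref{lem:dd'}. Your additional explicit check that $r_1'd_2'-r_2'd_1'>0$ via $\ell_2'(r_1'd_2'-r_2'd_1')=r_1'd-rd_1'$ and Lemma \ref{lem:d_1} is exactly what the paper records in the remark preceding Lemma \ref{lem:dd'}.
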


\begin{proof}
By using Lemma \ref{lem:dd'}, we get
\begin{equation}\label{eq:area}
\ell(r_1d_2-r_2 d_1)-\ell(r_1' d_2'-r_2' d_1')=\ell_1 \ell_1'(r_1 d_1'-r_1' d_1)+\ell_2 \ell_2'(d_2 r_2'-r_2 d_2')>0.
\end{equation}
\end{proof}

In order to prove the other inequality, we prepare some lemmas.

\begin{lem}\label{lem:chi}
Let $F_1$ and $F_2$ be stable objects with 
isotropic Mukai vectors.
If $\Hom(F_1,F_2) \ne 0$, then $\chi(F_1,F_2) \geq 0$.
\end{lem}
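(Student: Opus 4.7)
The plan is to reduce to a Fourier--Mukai-transformed situation where $F_1$ becomes a skyscraper sheaf, and then prove the claim by a direct $\Ext$-computation on the target abelian surface.

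First I would dispose of the trivial case $F_1 \cong F_2$: here $\chi(F_1,F_2)=-\langle v(F_1),v(F_1)\rangle=0$ because $v(F_1)$ is isotropic, so there is nothing to prove. Thus assume $F_1 \not\cong F_2$. A stable object with isotropic Mukai vector necessarily has primitive Mukai vector, since any non-primitive isotropic class $mw$ with $m\geq 2$ is realized only by strictly semi-stable objects (e.g.\ $F^{\oplus m}$). Hence $Y:=M_H(v(F_1))$ is a fine moduli space, known to be an abelian surface of dimension $\langle v(F_1)^2\rangle+2=2$, and there is a Fourier--Mukai equivalence $\Phi:{\bf D}(X)\to {\bf D}(Y)$ whose kernel is the universal family, sending $F_1$ to the skyscraper sheaf $k_y$ at the point $y\in Y$ parametrizing $F_1$.

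Since $\Phi$ is an equivalence, $\Hom_X(F_1,F_2)=\Hom_Y(k_y,\Phi(F_2))$ and $\chi_X(F_1,F_2)=\chi_Y(k_y,\Phi(F_2))$. Using $v(k_y)=(0,0,1)$, the Mukai pairing computes
\[
\chi_X(F_1,F_2)=-\langle(0,0,1),v(\Phi(F_2))\rangle=\rk \Phi(F_2),
\]
where $\rk$ denotes the zeroth component of the Mukai vector. Thus the claim is reduced to the following: if $\Hom_Y(k_y,G)\ne 0$ for $G:=\Phi(F_2)$, then $\rk G\geq 0$.

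Now $G$ is stable with isotropic Mukai vector, so by Mukai's classification \cite{Mukai:1981} (see also \cite{BM}) it has the form $G=H[n]$ for a semi-homogeneous sheaf $H$ on $Y$ and some $n\in{\Bbb Z}$; in particular $\rk G=(-1)^n \rk H$. If $\rk H>0$ then $H$ is locally free, and via the Koszul resolution of $k_y$ together with Serre duality (using $\omega_Y={\cal O}_Y$) one obtains $\Ext^i_Y(k_y,H)=0$ for $i\ne 2$ and $\Ext^2_Y(k_y,H)={\Bbb C}^{\rk H}$. Hence $\Hom_Y(k_y,H[n])=\Ext^n_Y(k_y,H)\ne 0$ forces $n=2$ (even), so $\rk G=\rk H>0$. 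If $\rk H=0$, then $\rk G=0$, and the conclusion is immediate.

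The main obstacle is justifying that $\Phi(F_2)$ is concentrated in a single cohomological degree, so that the expression $G=H[n]$ makes sense. This is a standard consequence of Mukai's and Bridgeland--Maciocia's theory on abelian surfaces, which guarantees that a stable object with isotropic Mukai vector is, up to shift, a semi-homogeneous sheaf. The auxiliary fact that a positive-rank semi-homogeneous sheaf is locally free, needed for the $\Ext$-vanishing, is classical.
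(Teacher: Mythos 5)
Your route is genuinely different from the paper's. The paper's proof is a two-line citation: it observes that $F_1$ and $F_2$ are shifts of semi-homogeneous sheaves and then invokes \cite[Prop.~4.4]{Y:abel}, which rests on the structure theory of semi-homogeneous bundles (after pulling back by a suitable isogeny, $F_1^{\vee}\otimes F_2$ splits into line bundles of a common numerical class, so a nonzero section forces that class to be effective and hence $\chi\geq 0$). You instead move $F_1$ to a skyscraper sheaf by a Fourier--Mukai equivalence and reduce the lemma to the statement that $\Hom(k_y,G)\neq 0$ forces $\rk G\geq 0$ for $G$ a shift of a semi-homogeneous sheaf; the final $\Ext$-computation ($\Ext^i(k_y,H)=0$ for $i\neq 2$ when $H$ is locally free, and positive-rank stable sheaves with isotropic Mukai vector are locally free) is correct. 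Note, though, that your argument is not really more elementary: the step asserting that $\Phi(F_2)$ is again a shift of a semi-homogeneous sheaf is exactly the Bridgeland--Maciocia/Orlov input that the paper's citation also relies on.

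Two points need repair. First, your justification of primitivity is not a proof: the existence of strictly semistable objects with class $mw$ (such as $F^{\oplus m}$) does not exclude the existence of stable ones, so the sentence as written establishes nothing. The claim itself follows from Mukai's classification of simple semi-homogeneous bundles, but the cleaner fix is to bypass primitivity and fineness altogether: for a simple semi-homogeneous sheaf $F_1$, Mukai's theory directly produces an abelian surface $Y$ and a Fourier--Mukai equivalence ${\bf D}(X)\to{\bf D}(Y)$ sending $F_1$ to a skyscraper, which is all you use. Second, the lemma is applied in this paper to stable \emph{objects} that may be shifts of sheaves (e.g.\ $E_2=H^{-1}(E_2)[1]$ when $r_2<0$), and neither $\Hom(-,-)$ nor $\chi(-,-)$ is invariant under shifting one argument; you should say explicitly that you compose the equivalence with the appropriate shift so that $F_1$ goes to $k_y$ in degree $0$, after which $\Phi(F_2)$ is still a shift of a semi-homogeneous sheaf and the computation is unchanged.
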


\begin{proof}
We note that $F_1$ and $F_2$ are semi-homogeneous sheaves up to shift.
Then the claim is a consequence of \cite[Prop. 4.4]{Y:abel}.
\end{proof}

\begin{lem}\label{lem:uu}
For isotropic Mukai vectors
$$
u_i=(r_i,\xi_i,a_i),\; \xi_i=d_i H+D_i,\; D_i \in H^\perp, \; (i=1,2),
$$
we have
\begin{equation}
d_1 d_2 \langle u_1,u_2 \rangle=-\frac{1}{2}((d_2 D_1-d_1 D_2)^2)+(d_2 r_1-d_1 r_2)(d_2 a_1-d_1 a_2).
\end{equation}
\end{lem}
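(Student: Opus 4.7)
The plan is to verify the identity by direct algebraic expansion, using only the definition of the Mukai pairing together with the isotropy conditions $\langle u_i^2 \rangle = 0$. Set $n := (H^2)/2$. Since $\xi_i = d_i H + D_i$ with $D_i \in H^\perp$, we have $(\xi_i^2) = 2n d_i^2 + (D_i^2)$ and $(\xi_1 \cdot \xi_2) = 2n d_1 d_2 + (D_1 \cdot D_2)$. The isotropy condition gives the crucial substitution
\begin{equation*}
2 r_i a_i = (\xi_i^2) = 2 n d_i^2 + (D_i^2), \qquad i = 1,2.
\end{equation*}

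First I would rewrite the left-hand side as
\begin{equation*}
d_1 d_2 \langle u_1, u_2 \rangle = d_1 d_2 \bigl( (\xi_1 \cdot \xi_2) - r_1 a_2 - r_2 a_1 \bigr) = 2 n d_1^2 d_2^2 + d_1 d_2 (D_1 \cdot D_2) - d_1 d_2 (r_1 a_2 + r_2 a_1).
\end{equation*}
Then I would expand the two terms on the right-hand side separately:
\begin{equation*}
-\tfrac{1}{2} \bigl( (d_2 D_1 - d_1 D_2)^2 \bigr) = -\tfrac{1}{2} d_2^2 (D_1^2) + d_1 d_2 (D_1 \cdot D_2) - \tfrac{1}{2} d_1^2 (D_2^2),
\end{equation*}
and
\begin{equation*}
(d_2 r_1 - d_1 r_2)(d_2 a_1 - d_1 a_2) = d_2^2 r_1 a_1 + d_1^2 r_2 a_2 - d_1 d_2 (r_1 a_2 + r_2 a_1).
\end{equation*}

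The key step is to apply the isotropy substitution to the two cross terms, which yields $d_2^2 r_1 a_1 = n d_1^2 d_2^2 + \tfrac{1}{2} d_2^2 (D_1^2)$ and $d_1^2 r_2 a_2 = n d_1^2 d_2^2 + \tfrac{1}{2} d_1^2 (D_2^2)$. Adding the two expanded expressions, the $D_i^2$ terms cancel exactly, and what remains is $2 n d_1^2 d_2^2 + d_1 d_2 (D_1 \cdot D_2) - d_1 d_2 (r_1 a_2 + r_2 a_1)$, which matches the left-hand side.

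The whole argument is a routine symbolic manipulation; there is no conceptual obstacle, only the bookkeeping of making sure the $D_i^2$ contributions from expanding the square cancel precisely against the $D_i^2$ contributions produced by the isotropy substitution. In particular, the identity is purely formal and does not require any positivity or sign hypotheses on the $d_i$, $r_i$, $a_i$, or $D_i$.
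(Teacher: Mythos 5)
Your computation is correct: the expansion of both sides and the substitution $r_i a_i = \tfrac{1}{2}(\xi_i^2) = n d_i^2 + \tfrac{1}{2}(D_i^2)$ from isotropy make the $(D_i^2)$ terms cancel exactly as you say. The paper states this lemma without proof, treating it as exactly this routine verification, so your argument supplies the intended (and essentially only) proof.
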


\begin{NB}
\begin{lem}
$d_1 d_1'-a_1' d_1 \geq 0$ and $a_2' d_2-a_2 d_2' \geq 0$.
In particular 
$a_1' d_2'-a_2' d_1' \geq a_1 d_2-a_2 d_1>0$.
\end{lem}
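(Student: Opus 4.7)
The plan is to parallel the proof of the previous proposition (built on the ``area identity'' \eqref{eq:area}), but with the pair $(r_i,r_i')$ replaced by $(a_i,a_i')$. Starting from $a=\ell_1 a_1+\ell_2 a_2=\ell_1' a_1'+\ell_2' a_2'$ and $d=\ell_1 d_1+\ell_2 d_2=\ell_1' d_1'+\ell_2' d_2'$, expanding $(\ell_1 a_1)(\ell_1' d_1')-(\ell_1' a_1')(\ell_1 d_1)$ and using $\ell_1\ell_2=\ell_1'\ell_2'=\ell$ produces the identity
\[
\ell(a_1' d_2'-a_2' d_1')-\ell(a_1 d_2-a_2 d_1)=\ell_1\ell_1'(a_1' d_1-a_1 d_1')+\ell_2\ell_2'(a_2 d_2'-a_2' d_2).
\]
Thus the ``In particular'' inequality reduces to showing that the two summands on the right are nonnegative; the strict positivity $a_1 d_2-a_2 d_1>0$ in the final assertion is already built into the setup, since this quantity is the numerator of $t_0^2 n=(a_1 d_2-a_2 d_1)/(r_1 d_2-r_2 d_1)$ at a totally semistable wall with $t_0>0$.

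To sign the two summands, I would upgrade the $\Hom$-nonvanishing inputs already extracted in Lemma \ref{lem:dd'}. The vanishing $\Hom(E_2,E_1')=0$ combined with the surjection $E\twoheadrightarrow E_1'$ coming from the other filtration yields $\Hom(E_1,E_1')\ne 0$, and dually $\Hom(E_2',E_2)\ne 0$ followed from the case analysis on $r_2'$ in that proof. For each of these nontrivial $\Hom$s, I pick stable summands $F_1\subset E_1$, $F_1'\subset E_1'$ (resp.\ $F_2'\subset E_2'$, $F_2\subset E_2$) admitting a nonzero morphism; their Mukai vectors are $v_1, v_1'$ (resp.\ $v_2', v_2$), all isotropic, so Lemma \ref{lem:chi} applies and yields $\chi(F_1,F_1')\geq 0$ and $\chi(F_2',F_2)\geq 0$. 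On an abelian surface Riemann--Roch gives $\chi(F,G)=-\langle v(F),v(G)\rangle$, so this translates to $\langle v_1,v_1'\rangle\leq 0$ and $\langle v_2',v_2\rangle\leq 0$.

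Finally, I would feed each Mukai-pairing bound into Lemma \ref{lem:uu}. Applied with $(u_1,u_2)=(v_1,v_1')$, the identity reads
\[
d_1 d_1'\langle v_1,v_1'\rangle = -\tfrac{1}{2}\bigl((d_1' D_1-d_1 D_1')^2\bigr)+(d_1' r_1-d_1 r_1')(d_1' a_1-d_1 a_1').
\]
Since $d_1, d_1'>0$ the left-hand side is $\leq 0$; by the Hodge index theorem applied to $d_1' D_1-d_1 D_1'\in H^\perp$ the first term on the right is $\geq 0$; hence $(d_1' r_1-d_1 r_1')(d_1' a_1-d_1 a_1')\leq 0$, and since $d_1' r_1-d_1 r_1'>0$ by Lemma \ref{lem:dd'}, this forces $a_1' d_1-a_1 d_1'\geq 0$. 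The analogous computation with $(u_1,u_2)=(v_2',v_2)$, combined with $d_2 r_2'-d_2' r_2\geq 0$ from Lemma \ref{lem:dd'}, yields $a_2 d_2'-a_2' d_2\geq 0$. The main obstacle is the degenerate case $r_2=r_2'=0$, where Lemma \ref{lem:dd'} gives only the equality $d_2 r_2'-d_2' r_2=0$ and the product identity becomes uninformative about $a$; here one has to conclude directly, exploiting that $E_2,E_2'$ are torsion with $(\xi_2^2)=(\xi_2'^2)=0$, so that the Hodge-index equality forces $\xi_2,\xi_2'$ to be proportional effective classes and the desired inequality degenerates to an equality compatible with the assertion.
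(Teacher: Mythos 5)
Your proof follows the paper's own argument for this lemma essentially step for step: the $\Hom$-nonvanishings extracted in the proof of Lemma \ref{lem:dd'}, Lemma \ref{lem:chi} to get $\langle v_1,v_1'\rangle\le 0$ and $\langle v_2',v_2\rangle\le 0$, Lemma \ref{lem:uu} combined with the signs of $r_1d_1'-r_1'd_1$ and $r_2'd_2-r_2d_2'$, and the area-type identity for the final inequality. One caution on your degenerate case $r_2=r_2'=0$: proportionality of $\xi_2$ and $\xi_2'$ does not force $a_2d_2'-a_2'd_2=0$ (compare $(0,C,-1)$ with $(0,C,-2)$); the correct conclusion $a_2d_2'-a_2'd_2\ge 0$ comes instead from the stability of the torsion sheaves $E_2,E_2'$, namely the phase comparison implied by $\Hom(E_2',E_2)\ne 0$, which is how the paper's refined version of this lemma treats that subcase.
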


\begin{proof}
By the proof of Lemma \ref{lem:dd'},
$\Hom(E_1',E_1) \ne 0$ and $\Hom(E_2',E_2) \ne 0$.
By Lemma \ref{lem:chi}, 
$\langle v_1,v_1' \rangle \leq 0$ and $\langle v_2,v_2' \rangle \leq 0$.
Applying Lemma \ref{lem:uu},
we get our first claim.
By using \eqref{eq:area}, we get the second claim.
\end{proof}
\end{NB}

\begin{lem}
\begin{enumerate}
\item[(1)]
If $a > 0$, then $a_1' d_1-a_1 d_1' > 0$ and $a_2 d_2'-a_2' d_2  \geq 0$.
\item[(2)]
If $a \leq 0$, then $a_1' d_1-a_1 d_1' \geq 0$ and
$a_2 d_2'-a_2' d_2 > 0$.
\item[(3)]
$a_1' d_2'-a_2' d_1' > a_1 d_2-a_2 d_1>0$.
\end{enumerate}
\end{lem}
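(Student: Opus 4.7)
The plan is to run the strategy of the preceding proposition (the strict inequalities $r_1 d_2 - r_2 d_1 > r_1' d_2' - r_2' d_1' > 0$) in the point-class direction of the Mukai lattice rather than the rank direction, and then combine the two resulting estimates into (3) via an algebraic identity analogous to \eqref{eq:area}.

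First, exactly as in the proof of Lemma~\ref{lem:dd'}, the compositions $E_1 \hookrightarrow E \twoheadrightarrow E_1'$ and $E_2' \hookrightarrow E \twoheadrightarrow E_2$ are nonzero, since the vanishings $\Hom(E_2, E_1') = 0$ and $\Hom(E_2', E_1) = 0$ are already established there. Picking stable summands, I obtain $F_1, F_1', F_2, F_2'$ with Mukai vectors $v_1, v_1', v_2, v_2'$ and nonzero morphisms $F_1 \to F_1'$ and $F_2' \to F_2$. Lemma~\ref{lem:chi} together with $\chi(\cdot,\cdot) = -\langle\cdot,\cdot\rangle$ yields $\langle v_1, v_1' \rangle \le 0$ and $\langle v_2, v_2' \rangle \le 0$. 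Feeding this into Lemma~\ref{lem:uu} and using that $H^\perp$ is negative semidefinite (so the first summand in Lemma~\ref{lem:uu} is nonnegative), one gets
\[
(d_1' r_1 - d_1 r_1')(d_1' a_1 - d_1 a_1') \le 0, \qquad (d_2' r_2 - d_2 r_2')(d_2' a_2 - d_2 a_2') \le 0.
\]
The rank inequalities of Lemma~\ref{lem:dd'} then deliver the weak versions $a_1' d_1 - a_1 d_1' \ge 0$ and $a_2 d_2' - a_2' d_2 \ge 0$.

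For the strict inequalities demanded by (1) and (2), I would examine the equality case. If $a_1' d_1 - a_1 d_1' = 0$ then both summands in Lemma~\ref{lem:uu} must vanish, forcing $d_1 D_1' = d_1' D_1$ and $\langle v_1, v_1' \rangle = 0$. Substituting $a_1' = a_1 d_1'/d_1$ and using $(\xi_1^2) = 2 r_1 a_1$ reduces $\langle v_1, v_1' \rangle = 0$ to $a_1(r_1 d_1' - r_1' d_1) = 0$; since $r_1 d_1' - r_1' d_1 > 0$ strictly by Lemma~\ref{lem:dd'}, we conclude $a_1 = 0$. But Lemma~\ref{lem:d_1} gives $a_1 d > a d_1 > 0$ whenever $a > 0$, contradicting $a_1 = 0$ and yielding (1). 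The symmetric argument for $(v_2, v_2')$ uses Lemma~\ref{lem:a<0}, which forces $a_2 < 0$ when $a \le 0$, giving (2).

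Finally, (3) is extracted from the identity
\[
\ell\bigl((a_1' d_2' - a_2' d_1') - (a_1 d_2 - a_2 d_1)\bigr) = \ell_1 \ell_1'(a_1' d_1 - a_1 d_1') + \ell_2 \ell_2'(a_2 d_2' - a_2' d_2),
\]
verifiable by eliminating $a$ and $d$ via $\ell_1 a_1 + \ell_2 a_2 = \ell_1' a_1' + \ell_2' a_2'$ and $\ell_1 d_1 + \ell_2 d_2 = \ell_1' d_1' + \ell_2' d_2'$: by (1) and (2) the right-hand side is strictly positive in either sign regime of $a$, so $a_1' d_2' - a_2' d_1' > a_1 d_2 - a_2 d_1$; the remaining positivity $a_1 d_2 - a_2 d_1 > 0$ follows from $a_1 d - a d_1 = \ell_2(a_1 d_2 - a_2 d_1)$ together with Lemma~\ref{lem:d_1}. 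The main obstacle I anticipate is the degenerate case $r_2 = r_2' = 0$: there the product inequality collapses to $0 \le 0$ and no sign propagates to $a_2 d_2' - a_2' d_2$. In this case $E_2, E_2'$ are both torsion sheaves, supported on elliptic curves since $\xi_2$ is effective with $(\xi_2^2) = 0$, and the strict sign needed in (2) must be extracted directly by enumerating the allowable $v_2, v_2'$ with $\langle v_1, v_2 \rangle = \langle v_1', v_2' \rangle = 1$ and invoking Lemma~\ref{lem:a<0}(2).
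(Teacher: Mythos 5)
Your treatment of the weak inequalities and of part (3) is essentially the paper's: both arguments start from $\Hom(E_1,E_1')\ne 0$ and $\Hom(E_2',E_2)\ne 0$, feed Lemma \ref{lem:chi} into Lemma \ref{lem:uu} together with the sign information from Lemma \ref{lem:dd'}, and conclude (3) from the identity \eqref{eq:area2} (your displayed identity is exactly that one, and your derivation of $a_1d_2-a_2d_1>0$ from $a_1d-ad_1=\ell_2(a_1d_2-a_2d_1)$ and Lemma \ref{lem:d_1} is correct). Where you genuinely diverge is in how strictness is obtained. The paper transports the problem to $\widehat{X}$: for $a>0$ it notes that $\Phi(E_1)^{\vee}$ and $\Phi(E_1')^{\vee}$ are $\mu$-stable semi-homogeneous bundles of ranks $\ell_1a_1,\ell_1'a_1'$ with a nonzero Hom between them, so Lemma \ref{lem:mu-s} turns equality of slopes into $v_1=v_1'$, a contradiction; for $a\le 0$ the same is done with $\Phi(E_2)[1],\Phi(E_2')[1]$. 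You instead analyze the equality case of Lemma \ref{lem:uu} directly: equality forces $\langle v_1,v_1'\rangle=0$ and $d_1'D_1=d_1D_1'$, whence $\langle v_1,v_1'\rangle=(a_1/d_1)(r_1d_1'-r_1'd_1)$ and so $a_1=0$, contradicting $a_1>0$ from Lemma \ref{lem:a>0} (resp.\ $a_2<0$ from Lemma \ref{lem:a<0}). I checked this computation; it is correct and gives a purely lattice-theoretic substitute for the Fourier--Mukai step in all cases where Lemma \ref{lem:uu} has any content.

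The degenerate case $r_2=r_2'=0$ is, however, a genuine hole, and it is wider than you indicate: there Lemma \ref{lem:uu} yields nothing, so not only the strict inequality needed in (2) but already the weak inequality $a_2d_2'-a_2'd_2\ge 0$ needed in (1) is left unproved by your main argument, and your closing sentence only gestures at ``enumerating the allowable $v_2,v_2'$'' without doing it. The missing step should be written out: since $r_2=r_2'=0$ one has $\langle v_2,v_2'\rangle=(\xi_2\cdot\xi_2')\le 0$, while $\xi_2,\xi_2'$ are effective isotropic classes of positive $H$-degree, so $(\xi_2\cdot\xi_2')=0$ and $\xi_2,\xi_2'$ are multiples of a single elliptic curve class; then $\Hom(E_2',E_2)\ne 0$ between semistable purely $1$-dimensional sheaves gives $a_2'/d_2'\le a_2/d_2$, i.e.\ the weak inequality (this is the paper's ``stability of $E_2$ and $E_2'$'' in case (1)), and equality would force $v_2'\in{\Bbb Q}v_2$, hence $v_2=v_2'$ by primitivity, hence $v_1=v_1'$ and $W=W'$, contradicting $t_0<t_0'$, which restores strictness for (2). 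Note that the paper's route avoids this case split altogether, because on $\widehat{X}$ the objects $\Phi(E_2)[1],\Phi(E_2')[1]$ have positive ranks $-\ell_2a_2,-\ell_2'a_2'$ even when $r_2=r_2'=0$, so Lemma \ref{lem:mu-s} applies uniformly.
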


\begin{proof}

(1) Asume that $a>0$.
Then Lemma \ref{lem:a>0} implies that
$\Phi(E_1)^{\vee}$ and $\Phi(E_1')^{\vee}$ are 
semi-homogeneous vector bundles 
with the Mukai vectors $\ell_1 (a_1,\widehat{\xi_1},r_1)$ and
$\ell_1' (a_1',\widehat{\xi_1'},r_1')$.
Since 
$$
\Hom(\Phi(E_1')^{\vee},\Phi(E_1)^{\vee}) \cong \Hom(E_1,E_1') \ne 0,
$$
by using Lemma \ref{lem:mu-s}, we get
$a_1' d_1-a_1 d_1' > 0$.

By $\Hom(E_2',E_2) \ne 0$ and Lemma \ref{lem:chi},
$\langle v_2,v_2' \rangle \leq 0$.
If $r_2 \ne 0$ or $r_2' \ne 0$, then 
Lemma \ref{lem:dd'} implies $r_2' d_2-r_2 d_2'>0$.
By using Lemma \ref{lem:uu}, we get 
$a_2 d_2'-a_2' d_2  \geq 0$.
If $r_2=r_2' =0$, then the stability of $E_2$ and $E_2'$ imply that
$a_2 d_2'-a_2' d_2 \geq 0$.

(2)
Assume that $a \leq 0$.
Then we get $a_2,a_2'<0$ by Lemma \ref{lem:a<0}.
In this case,
$\Phi(E_2)[1]$ and $\Phi(E_2')[1]$ are semi-homogeneous vector bundles
with Mukai vectors 
$\ell_2 (-a_2,\widehat{\xi_2},-r_2)$ and
$\ell_2' (-a_2',\widehat{\xi_2'},-r_2')$.
Since 
$$
\Hom(\Phi(E_2'),\Phi(E_2)) \cong \Hom(E_2',E_2) \ne 0,
$$
by using Lemma \ref{lem:mu-s}, we get
$a_2 d_2'-a_2' d_2 > 0$.

By $\Hom(E_1,E_1') \ne 0$ and Lemma \ref{lem:chi},
$\langle v_1,v_1' \rangle \leq 0$.
We also get $r_1' d_1-r_1 d_1'<0$ by
Lemma \ref{lem:dd'}.
Hence we can apply Lemma \ref{lem:uu} to get
$a_1' d_1-a_1 d_1'  \geq 0$.

(3) 
By (1) and (2), we get 
\begin{equation}\label{eq:area2}
\ell (a_1'd_2'-a_2' d_1')-\ell (a_1 d_2-a_2 d_1)=\ell_1 \ell_1' (a_1' d_1-a_1 d_1')+
\ell_2 \ell_2' (d_2' a_2-a_2' d_2)>0.
\end{equation}
\end{proof}



\end{document}